\definecolor{refkey}{rgb}{1,0,0}
\definecolor{labelkey}{rgb}{1,0,0}
\newcommand{\cbox}[1]{\raisebox{\depth}{\fcolorbox{black}{#1}{\null}}}
\let\depth\relax
  \mathchardef\ordinarycolon\mathcode`\:
\theoremstyle{plain}
\newtheorem{thm}{Theorem}[section]
\newtheorem{lem}[thm]{Lemma}
\begin{document}

\title{{\LARGE{Sensitivity of chemical reaction networks:\\a structural approach.\\3.~Regular multimolecular systems}}
%\subtitle{	
%	\vspace{4ex}
%	{\it \normalsize -- Dedicated to 
%	on the occasion of  --}\vspace{4ex}
	}

\author{
Bernhard Brehm*\\
Bernold Fiedler*\\
\vspace{2cm}}

\date{version of \today}
\maketitle
\thispagestyle{empty}

\vfill

*\\
Institut für Mathematik\\
Freie Universität Berlin\\
Arnimallee 7\\ 
14195 Berlin, Germany

%%%%%%%%%%%%%%%%%%%%%%%%%%%%%%%%%%%%%%%%%%%%%%%%%%%%%%%%%%%

\newpage
\pagestyle{plain}
\pagenumbering{roman}
\setcounter{page}{1}

\begin{abstract}
We present a systematic mathematical analysis of the qualitative steady-state response to rate perturbations in large classes of reaction networks.
This includes multimolecular reactions and allows for catalysis, enzymatic reactions, multiple reaction products, nonmonotone rate functions, and non-closed autonomous systems.
Our \emph{structural sensitivity analysis} is based on the stoichiometry of the reaction network, only.
It does not require numerical data on reaction rates.
Instead, we impose mild and generic nondegeneracy conditions of algebraic type.
From the structural data, only, we derive which steady-state concentrations are sensitive to, and hence influenced by, changes of any particular reaction rate -- and which are not.
We also establish transitivity properties for influences involving rate perturbations.
This allows us to derive an \emph{influence graph} which globally summarizes the influence pattern of any given network.
The influence graph allows the computational, but meaningful, automatic identification of functional subunits in general networks, which hierarchically influence each other.
We illustrate our results for several variants of the glycolytic citric acid cycle.
Biological applications include enzyme knockout experiments, and metabolic control.
\end{abstract}

\tableofcontents

%%%%%%%%%%%%%%%%%%%%%%%%%%%%%%%%%%%%%%%%%%%%%%%%%%%%%%%%%%%

\newpage
\pagenumbering{arabic}
\setcounter{page}{1}

\section{Introduction}
\label{sec1:intro}

\numberwithin{equation}{section}
\numberwithin{figure}{section}

For large classes of biological, chemical or metabolic reaction networks, detailed numerical data on reaction rates are neither available, nor accessible, by parameter identification.
See the large, and growing, data bases on chemical and metabolic pathways like \cite{BernhardD, KEGG}, for thousands of examples.
One standard approach to establish, and check, the validity of such networks are \emph{knockout experiments}:
some reaction is obstructed, via the knockout of its catalyzing enzyme, and the response of the network is measured, e.g., in terms of concentration changes of metabolites.
The large area of \emph{metabolic control} studies how reaction rates steer the network to desired behaviour, or switch between different tasks; see for example \cite{HeinrichSchuster, fell92, stephanopoulos} and the references there.

\par
In this setting it is our goal to develop a reliable, and largely automatic, mathematical tool to aid our systematic understanding of large networks. Chemical reaction networks consist of reactions among metabolites. We consider reactions, quite generally, to cover biological and chemical reactions. Since we do not incorporate temperature dependence, explicitly, our setting is isothermal. The reacting chemical species, often called metabolites in biological settings, are denoted by labels $m$. 

More specifically, we address the response of steady states to rate perturbations in the network.
Here steady state refers to any time-independent long-term state of the system.
``Long-term'' refers to the relevant time scales of the model.
Time-periodic or chaotic responses are excluded, at present.

In experiments, only those steady states may be observable which are stable, or at least metastable on the relevant time scale.
Our mathematical approach is not limited by any stability or hyperbolicity requirements other than some mild nondegeneracy assumption.
For simplicity we present our approach in a local setting of linearized steady state response to small perturbations. In the concluding discussion we indicate how our results extend to the global setting of knockout experiments.

We derive a qualitative sensitivity matrix for the steady state response, which we encode as a \emph{flux influence relation} $j^*$ influences $j'$; in symbols: $j^* \leadsto j'$. Here $j^*$ indicates the label of the perturbed reaction rate, given by the experimental setting, and $j^* \leadsto j'$ indicates a \emph{nonzero resulting flux change} of the reaction with label $j'$ in the network. 
The flux of $j'$ measures the rate of conversion from input metabolites of reaction $j'$ to output metabolites. The flux change measures the change of that conversion rate, at steady state, under the influence of the external rate perturbation of reaction $j^*$. Note how the roles of $j^*$ and $j'$ in the influence relation $j^* \leadsto j'$ are subtly different. For example, $j^*$ may, or may not, influence $j':=j^*$ itself.

Although flux changes are the more convenient object, mathematically, they are less accessible in experiments. We therefore include any \emph{nonzero resulting concentration change} of any metabolite $m'$, at steady state, into our analysis; in symbols: $j^* \leadsto m'$. To combine both the flux and the concentration aspect, notationally, we define the \emph{influence relation}
	\begin{equation}
	j^* \leadsto \beta\,,
	\label{eq:1.18}
	\end{equation}
to indicate that the effect of the perturbation $j^*$ on $\beta$ is a nonzero change of $\beta$. Here $\beta$ denotes either a reaction $j'$ or a metabolite $m'$. In this case of a \emph{nonzero influence} $j^* \leadsto \beta$ we also say that the reaction or metabolite $\beta$ is \emph{sensitive to} $j^*$.

The influence relation \eqref{eq:1.18}, in itself, does not carry much information beyond a casuistic tabulation of who-influences-who. It is a transitivity property, which makes flux influence a central tool for the understanding of sensitivity results in metabolic networks.  
Consider any 2-step chain of influences $j^* \leadsto j \leadsto j'$, where $j^*, j, j'$ denote reaction labels in the network. As a consequence, direct influence $j^* \leadsto j'$ will be established. Only this \emph{transitivity of influence}, 
	\begin{equation}
	j^* \leadsto j \leadsto j' \quad \mathrm{implies}\quad 	j^* \leadsto j',
	\label{eq:1.18trans}
	\end{equation}
justifies the notion of a \emph{hierarchy of influence}. 

It is the notion of nonzero influence, together with this central transitivity property of the flux influence relation, which will identify meaningful units in a network and will allow us to sum up all results of rate perturbation experiments in a single \emph{flux influence graph} $\mathcal{F}$ below.

One first phenomenological indication for the mathematical structure of flux influence, which motivated our detailed study, was the apparent \emph{sparsity of sensitivity}:
Given a rate perturbation of a specific reaction $j^*$,
many $\beta$ are not influenced at all, because the reaction flux or metabolite concentration associated to 
$\beta$ does not change.
Such \emph{zero response} is the counterpart of the flux influence graph $\mathcal{F}$.
In fact, any zero response is a rational and mathematically rigorous test to any purported pathway structure:
\emph{Any experimentally validated nonzero response, above error threshold, which contradicts a zero entry in the sensitivity matrix of influences, falsifies the underlying pathway.}

We briefly comment on previous work in this series of papers, even though the present paper is self-contained and does not require any but the most standard mathematical prerequisites. The present paper is a sequel to \cite{fiedler15example, fiedler15mono}. In \cite{fiedler15example} a hierarchy of influence patterns was observed in several examples, on a purely phenomenological level and without deeper mathematical justification. Most notably, a simplified version of a model by Ishii and coworkers, \cite{ishii}, on the tricarboxylic citric acid cycle (TCAC) was studied. The paper \cite{fiedler15example} concluded:

\begin{quotation}
\textit{... our explanation of hierarchy patterns is still intuitive.
Only in the monomolecular case, so far, are we able to prove the
observed transitivity of the influence relation \emph{perturbation of
reaction $j^*$ implies flux change in reaction} $j'$, $j^* \leadsto j'$. 
This transitivity
underlies the hierarchy of flux response patterns and their relation
to directed cycles in the reaction network. See \cite{fiedler15mono} 
for the mathematical details which are beyond the scope of
our present exposition.}
\end{quotation}

The mathematical companion paper \cite{fiedler15mono}, however, did not yet meet our mathematical objectives. It served as just a first attempt towards a comprehensive mathematical understanding of the sparse anecdotal evidence compiled in \cite{fiedler15example}. Mathematical results were limited to the monomolecular case, where each reaction just converts a single metabolite. Bimolecular reactions were excluded, as were reactions with more than a single metabolite as a product. Although of substantial mathematical interest, the restriction to monomolecular reactions still excluded most biologically relevant examples. In particular, the original challenge by the TCA cycles of Ishii and Nakahigashi, \cite{ishii, ishii2}, had not yet been met.

Only in the present paper, for the first time, are we able to present a comprehensive and mathematically founded theory which explains, and proves, the appearance of sensitivity patterns and hierarchies of steady state responses in multimolecular metabolic networks.
The full mathematical background of a theoretical example from \cite{fiedler15example} is developed in section~\ref{sec3:theoex} below. Three variants of the Ishii TCAC metabolism \cite{ishii} will be compared in section~\ref{sec7:extca}, along with a further modification by Nakahigashi and coworkers, \cite {ishii2}, in the light of our present mathematical understanding.

In section~\ref{sec2:mainres} we present our main mathematical results. To prepare, we describe our general mathematical setting next.
We strongly recommend \cite{feinberg95, palsson05} for a general background.

Let the labels $j = 1, \ldots, E$ enumerate the total number $E$ of reactions in the network. Let $\mathcal{E} = \{1, \dots ,E\}$ denote the set of all reactions. Similarly, let the labels $m = 1,\ldots, M$ enumerate the total number $M$ of metabolites in the network. Let $\mathcal{M} = \{X_1, \dots , X_M\}$ denote the set of all metabolites. 
Mimicking standard chemical notation, a \emph{reaction network} is given by $E$ reactions
	\begin{equation}
	j: \quad y_1^j X_1 + \ldots + y_M^j X_M \ \longrightarrow \ 
	\bar{y}_1^j X_1 + \ldots + \bar{y}_M^j X_M\,.
	\label{eq:1.1}
	\end{equation}
The components of the \emph{stoichiometric coefficient} vectors $y^j, \bar{y}^j \in \mathbb{R}^M$ are nonnegative, and usually integer.
Reversible reactions are, not required but, admitted and will be listed as two separate reactions.
For notational convenience we will frequently identify $\mathcal{M}$ with its index set $\{1, \ldots, M\}$. We will even write $y^j, \bar{y}^j \in \mathcal{M}$, instead of $y^j, \bar{y}^j  \in\mathbb{R}^M$, to emphasize the supporting component set of vectors.
For subsets $\mathcal{M}_0 \subset \mathcal{M}$ we analogously abbreviate
	\begin{equation}
	\text{supp}\, y := \{ m\,|\,y_m \neq 0\} \subseteq \mathcal{M}_0\, \quad \text{by}\quad y \in \mathcal{M}_0\,, 
	\label{eq:1.2}
	\end{equation}
to denote the subspace of vectors $y \in \mathbb{R}^M$ which are supported on $\mathcal{M}_0$, only.
We call $m$ an \emph{input}, \emph{reactant}, or \emph{educt} of reaction $j$, if $y_m^j \neq 0$; in symbols:	\begin{equation}
	m \vdash j\,.
	\label{eq:1.3}
	\end{equation}
We use this notation even when $m$ \emph{catalyzes} $j$, i.e. when $\bar{y}_m^j = y_m^j$, and in presence of further inputs of reaction $j$.
\emph{Outputs}, or \emph{products} $m$ are given by $\bar{y}_m^j \neq 0$.
\emph{Feed reactions} only depend on external chemical input species which are provided at constant concentration levels, unaffected by the network. Thus feed reactions $j$ have $y^j =0$.
\emph{Exit reactions}, in contrast, only produce chemical output species which do not re-enter the network. Thus exit reactions $j$ have $\bar{y}^j =0$. See section \ref{sec3:theoex} for an easy example.

The \emph{stoichiometric matrix} $S$ is essential to derive the differential equation \eqref{eq:1.8}, below, for changes of metabolite concentrations in the network \eqref{eq:1.1}.
The $M \times E$ matrix $S$ is defined by
	\begin{equation}
	\begin{aligned}
	S: \quad \mathcal{E} &\longrightarrow \mathcal{M}\\
	e_j &\longmapsto \bar{y}^j -y^j
	\end{aligned}
	\label{eq:1.4}
	\end{equation}
where $e_j \in \mathcal{E}$, alias $\mathbb{R}^E$, denotes the $j$-th unit vector.
In other words, the columns $S^j$ of the stoichiometric matrix $S$ are simply the differences of the stoichiometric output and input vectors $\bar{y}^j$ and $y^j$.
The usually nonlinear \emph{reaction rate} $r_j$, at which reaction $j$ occurs per time unit, depends on the \emph{concentrations} $x_m$ of the metabolites $X_m$.
Evidently $r_j = r_j(x)$ only depends on the input metabolites of reaction $j$. In symbols, the partial derivatives $r_{jm}$ of the reaction rates $r_j$ satisfy
	\begin{equation}
	r_{jm} := \partial_{x_m} r_j(x) =0\,,\quad
	\text{unless} \quad m \vdash j\,.
	\label{eq:1.5}
	\end{equation}
In other words, the supports satisfy
	\begin{equation}
	(r_{jm})_{m\in\mathcal{M}} \in \text{Inputs}\, (j):=
	\{m \in\mathcal{M}\,|\, m\vdash j\}\,,
	\label{eq:1.6}
	\end{equation}
for each reaction $j\in \mathcal{E}$, by our convenient abuse (\ref{eq:1.2}) of notation.
The time evolution of the metabolite concentrations $x_m(t)$ is then  given by the coupled nonlinear ODE system
	\begin{equation}
	\dot{x}_m(t) = \sum_{j\in \mathcal{E}} (\bar{y}^j_m - y^j_m)\, r_j
	\big(x(t)\big)\,,
	\label{eq:1.7}
	\end{equation}
for $m=1,\ldots,M$, under isothermal conditions for the reaction rates. In vector notation with $x=(x_1, \dots , x_M)$ and $r= (r_1, \dots, r_E)$ this reads
	\begin{equation}
	\dot{x} = S r(x)\,.
	\label{eq:1.8}
	\end{equation}
For simplicity of presentation, we will assume
	\begin{equation}
	S \; \text{has full rank}\; M\, ,
	\label{eq:1.9}
	\end{equation}		
throughout our paper. This excludes cokernel of $S$, alias \emph{stoichiometric subspaces}. These are affine linear subspaces which are time invariant under the flow  of $x(t)$. In other words, we exclude trivially conserved linear combinations of the metabolite concentrations $x_m(t)$. For further comments on the case of stoichiometric subspaces we refer to the discussion in section \ref{sec:disc}.

\emph{Steady states} $x$ are time-independent solutions of \eqref{eq:1.8}, i.e.
	\begin{equation}
	0 = S r(x)\,.
	\label{eq:1.10}
	\end{equation}
Considerable effort has gone into the study of existence, uniqueness, and possible multiplicity of steady state solutions $x$ of \eqref{eq:1.10}.
See in particular Feinberg's pioneering work \cite{feinberg95}, his rather advanced recent results \cite{shinarfeinberg}, and the many references there.
In the present paper we do not address this question, at all.
Rather, \emph{we assume existence of a steady state} $x$ \emph{throughout this paper}.
We neither assume uniqueness, nor stability, of this steady state. We focus on the following \emph{central question}:
	\begin{equation}
	\begin{aligned}
	&\textit{How does a given steady state}\; x\;
	\textit{respond to perturbations}\\
	&\textit{of any 
	particular reaction rate}\; r_{j^*},\; 
	\textit{qualitatively}?
	\end{aligned}
	\label{eq:1.11}
	\end{equation}
To define and study this response sensitivity of steady states $x$ with respect to small external changes of any reaction rate $r_{j^*}$ we introduce formal reaction rate parameters $\varepsilon_j$ as
	\begin{equation}
	r_j = r_j (\varepsilon_j, x)\,.
	\label{eq:1.12}
	\end{equation}
We also write $r=r(\varepsilon, x)$ on the right hand side of \eqref{eq:1.10}.
For extreme generality, we could choose $\varepsilon_j$ to parametrize the functions $r_j$, themselves.
More modestly, we might think of just a rate coefficient like $r_j(\varepsilon_j,x)$:= $(1+\varepsilon_j)r_j(x)$.

The standard implicit function theorem in a $C^1$-setting immediately answers the response question -- albeit rather abstractly and under mild nondegeneracy assumptions.
Given a reference steady state solution $x(0)$, for $\varepsilon=0$, we obtain a unique differentiable family of solutions $x(\varepsilon)$, for sufficiently small $|\varepsilon|$, such that
	\begin{equation}
	0=S \,\partial_{\varepsilon_{j^*}} r 
	+S R \,\partial_{\varepsilon_{j^*}}
	x(\varepsilon)
	\label{eq:1.13}
	\end{equation}
holds for the partial derivatives.
Here the \emph{rate matrix} $R =(r_{jm})_{j\in \mathcal{E}, \, m \in \mathcal{M}}$ 
is the Jacobian matrix of the rate function vector $r(\varepsilon, x)$ with respect to $x$; see \eqref{eq:1.5} for the definition of the partial derivatives $r_{jm}$ of $r_j$.
For the partial derivatives of the rate function vector $r$ with respect to the parameters $\varepsilon$ it is sufficient to consider unit vectors
	\begin{equation}
	\partial_{\varepsilon_{j^*}} r
	= e_{j^*}\,;
	\label{eq:1.14}
	\end{equation}
all other cases can be derived from that.
We call
	\begin{equation}
	\delta x_m^{j^*}
	:= \partial_{\varepsilon_{j^*}} x_m
	\label{eq:1.15}
	\end{equation}
\emph{the sensitivity of metabolite} $m$ \emph{with respect to rate changes of} $j^*$.
Similarly, we call
	\begin{equation}
	\Phi_j^{j^*} := \delta_{j^*j}
	+ (R\delta x^{j^*})_j
	\label{eq:1.16}
	\end{equation}
the \emph{sensitivity of reaction flux} $j$ \emph{with respect to rate changes of} $j^*$.
The Kronecker symbol $\delta_{{j^*}j}$ accounts for the externally forced flux change by variation of the parameter $\varepsilon_{j^*}$ itself.
The reaction-induced term $R \delta x$ may, or may not, counteract or even annihilate this forcing, depending on the effected metabolite changes $\delta x$.

In vector notation, \eqref{eq:1.13} becomes the \emph{flux balance}
	\begin{equation}
	0 = S \Phi^{j^*}\,.
	\label{eq:1.17}
	\end{equation}

Our qualitative answer to the central sensitivity question \eqref{eq:1.11} will distinguish those fluxes and metabolites $\beta = j,\,m$ with nonzero response to the external rate change $j^*$, from those with zero response, i.e. without any response at all.
Let $\beta \in \mathcal{E} \cup \mathcal{M}$ denote any flux or metabolite.
We recall from \eqref{eq:1.18} that $j^*$ \emph{influences} $\beta$, in symbols $j^* \leadsto \beta$, if the component $\beta$ of the \emph{response vector} $(\Phi^{j^*}, \delta x^{j^*}) \in \mathcal{E} \times \mathcal{M}$ is nonzero. 

The five theoretical results of theorems~\ref{thm:2.1}--\ref{thm:2.7} below will investigate the influence relation \eqref{eq:1.18} in detail. Our first three results are reminiscent of \cite{fiedler15mono}, but now hold in the general multimolecular setting. Our results are based on the notion of a \emph{child selection}
	\begin{equation}
	J:\ \mathcal{M} \longrightarrow \mathcal{E}
	\label{eq:2.2}
	\end{equation}
which we define as follows.
We require injectivity of $J$, and we require each metabolite $m \in \mathcal{M}$ to be an input of the  selected reaction $J(m) \in \mathcal{E}$.
In symbols:
	\begin{equation}
	m \vdash J(m)\,,
	\label{eq:2.3}
	\end{equation}
for all $m \in \mathcal{M}$.
Occasionally we call $m$ a \emph{mother}, or \emph{parent}, of the \emph{child} $j$ if $m \vdash j$, i.e. $r_{jm} \not \equiv 0$.
Note that the same child $j$ may have two or more ``mothers'' $m$, i.e. input metabolites $m \vdash j$.
A \emph{single child} $j^*$, for example, possesses at least one mother $m^* \vdash j^*$ which has no other children, besides $j^*$ itself.
Then any child selection $J$ must select that single child $j^* = J(m^*)$ of the mother metabolite $m^*$.

Theorem~\ref{thm:2.1} addresses the standard requirement of a nonzero Jacobian
	\begin{equation}
	\det (SR) \neq 0\,,
	\label{eq:1.19}
	\end{equation}
in the standard implicit function theorem; see \eqref{eq:1.13}. 
We recall here that $S$ denotes the stoichiometric matrix and $R$ denotes the rate matrix.
We call the reaction network \eqref{eq:1.1} \emph{regular} at the steady state $x$ if \eqref{eq:1.19} holds there.
Theorem~\ref{thm:2.1} asserts that the nondegeneracy assumption (\ref{eq:1.19}) holds, algebraically, if and only if there exists a child selection $J:\mathcal{M} \rightarrow \mathcal{E}$ such that the columns $J(\mathcal{M})$ of the stoichiometric matrix $S$ form an invertible $M\times M$ block. This eliminates all consideration of the particular reaction rates $r(x)$. Indeed the condition on the child selection $J$ only involves conditions on the arrows $j=J(m)$ in the reaction network (\ref{eq:1.1}), as selected by $J$, and on the stoichiometric coefficients (\ref{eq:1.4}) associated to these arrows. 

Well, can that be true? Nowhere did we even bother to exclude the disastrous case $R=0$ of all constant reaction rates, which leaves $x$ undetermined. \emph{We consider} $\det SR$ \emph{as a polynomial expression in the nontrivial partial derivatives} $r_{jm}$ of the reaction rate functions $r_j$, evaluated at the steady state $x$.
We call an expression \emph{nonzero, algebraically,} if it is nonzero as a polynomial, or rational function, in  the real variables $r_{jm}$, taken over all metabolite inputs $m \vdash j$ of the reactions $j$.
It is in this precise sense, how influence \eqref{eq:1.18} and nondegeneracy \eqref{eq:1.19} are considered to hold.
In particular the set of $r_{jm}$ where our assertions may fail to hold is an algebraic variety of codimension at least one in the space of all real $r_{jm}$ with $m \vdash j$.
In this precise sense our results hold true for \emph{generic rate functions} $r_j$. Thus theorem~\ref{thm:2.1} clarifies the relation between the nondegeneracy assumption \eqref{eq:1.19} and child selections $J$ in the network. See section \ref{sec3:theoex} for a simple explicit example.

As a caveat we have to issue a warning concerning \emph{mass action kinetics}, defined by
	\begin{equation}
	r_j(x) = k_jx^{y^j} 
	:= k_j x_1^{y_1^j} \cdot \ldots \cdot x_M^{y_M^j}\,.
	\label{eq:1.20}
	\end{equation}
Here $y_m^j$ are nonnegative integers, and only a single parameter $k_j$ is available for each reaction.
The partial derivatives
	\begin{equation}
	r_{jm} = y_m^j r_j / x_m	
	\label{eq:1.21}
	\end{equation}
are closely related to the steady state rates $r_j$ themselves -- too closely, in fact, to be considered algebraically independent.
\emph{Therefore our theory does not apply to pure mass action kinetics}.
Slightly richer classes like Michaelis-Menten, \, Langmuir-Hinshelwood, or other kinetics where each participating species $x_m$, $m \vdash j$ enters with an individual kinetic coefficient, fall well within the scope of our algebraic results.

Theorem~\ref{thm:2.2} clarifies flux influence $j^* \leadsto j'$, in the same algebraic spirit. Algebraically nonzero self-influence $j^* \leadsto j^*$ occurs, if and only if there exists a child selection $J:\mathcal{M} \rightarrow \mathcal{E}$ such that $j^* \not\in J(\mathcal{M})$ and the columns $J(\mathcal{M})$ of the stoichiometric matrix $S$ form an invertible $M\times M$ block. Similarly, algebraically nonzero influence $j^* \leadsto j'\neq j^*$ occurs, if and only if there exists a child selection $J$ such that $j^* \not\in J(\mathcal{M}) \ni j'$ and the columns $\{j^*\}\cup J(\mathcal{M})\smallsetminus \{j'\}$ of the stoichiometric matrix $S$ form an invertible $M\times M$ block. Note how the influencing column $j^*$ has been swapped in, to replace the influenced column $j'$ of the stoichiometric matrix $S$.

Theorem~\ref{thm:2.3} clarifies metabolite influence $j^* \leadsto m'$, again in the same algebraic spirit. Algebraically nonzero metabolite influence $j^* \leadsto m'$ occurs, if and only if there exists a child selection $J:\mathcal{M}\smallsetminus \{m'\} \rightarrow \mathcal{E}$, on the remaining metabolites, such that $j^* \not\in J(\mathcal{M}\smallsetminus \{m'\})$ and the swapped columns $\{j^*\}\cup J(\mathcal{M}\smallsetminus \{m'\})$ of the stoichiometric matrix $S$ form an invertible $M\times M$ block. Here the influenced metabolite $m'$ has been swapped out of consideration, and the range of the remaining partial child selection $J$ has been augmented by the influencing column $j^*$ of the stoichiometric matrix $S$.

Transitivity theorem~\ref{thm:2.4}(ii) considers any 2-step chain of influences $j^* \leadsto j \leadsto \beta$, with either a reaction $\beta = j'$ or a metabolite $\beta = m'$ as the terminating target. As a consequence, direct influence $j^* \leadsto \beta$ is established. Compare \eqref{eq:1.18} and \eqref{eq:1.18trans}. Indeed, only this \emph{transitivity of influence} justifies the notion of a \emph{hierarchy of influence}. For example, only by transitivity of flux influences can we assert that any finite chain $j^* \leadsto j_1 \leadsto j_2 \leadsto \ldots \leadsto j_n$ of successive flux influences  amounts to a direct flux influence all along the chain. Indeed a rate perturbation of the first element $j^*$, in the influence chain, changes the reaction fluxes all the way, down to the very last terminating target $j_n$ of the whole chain.

In particular transitivity theorem~\ref{thm:2.4} allows us to summarize all algebraically nonzero flux responses $ j^* \leadsto j' \in \mathcal{E}$ as a directed acyclic \emph{influence graph} $\mathcal{F}$. The vertices of $\mathcal{F}$ are the lumped subsets of reactions $j \in \mathcal{E}$ which mutually influence each other; see theorem~\ref{thm:2.7}.
The responses of metabolites $\beta = m' \in \mathcal{M}$ appear as annotations of the reaction vertices in the influence graph.
See also figs.~\ref{fig:3.1} and \ref{fig:vanilla-tca} below for examples.

The remaining sections are organized as follows. Section \ref{sec3:theoex} illustrates the main results of section \ref{sec2:mainres} by two examples. First we treat the simplest case $E=M$ of a minimal number $E$ of reactions, given that the stoichiometric $M\times E$ matrix $S$ possesses full rank $M$. Any flux influences turn out to be absent in that case; see \eqref{eq:3.8}. Moreover we give a detailed mathematical account of the simple bimolecular theoretical example from \cite{fiedler15example} which we had not been able to treat in \cite{fiedler15mono}, due to the monomolecular restriction.

Algebraic nondegeneracy $\det\, SR \neq 0$ of networks, the flux influence relation $j^*\leadsto j'$, and the metabolite influence relation $j^* \leadsto m'$ as stated in theorems \ref{thm:2.1} -- \ref{thm:2.3} involve some analysis of child selections $J: \mathcal{E} \rightarrow \mathcal{M}$. The theorems are proved in section \ref{sec4:proofs}. 
The augmenticity section \ref{secB4:aug} addresses the question of the fate of influence relations, and influence regions, under modifications of the network. Specifically we augment an existing network by additional reactions among the existing metabolites and/or the addition of new metabolites. See in particular theorem \ref{thm:B4.1}. Similiarly to the first examples in section \ref{sec3:theoex}, this illustrates theorems \ref{thm:2.1} -- \ref{thm:2.3}  in a general context. 

In section \ref{sec5:trans} we prove transitivity theorem \ref{thm:2.4}, which is central to all claims on hierarchy of influences. Although transitivity involves nondegeneracy $\det\, SR \neq 0$, as a prerequisite, our proof is not based on child selections directly.

Symbolic packages would typically involve terms of a complexity which grows exponentially with network size. Our practical computational approach to influence graphs, as outlined in section \ref{sect:algo}, is based on integer arithmetic modulo large primes $p$, instead. The reaction coefficients $r_{jm}$ are chosen randomly $\mathrm{mod}\, p$. 

In section \ref{sec7:extca} our approach is brought to bear on the original Ishii TCAC example \cite{ishii} and the Nakahigashi augmentation \cite{ishii2}, in several variants. In the spirit of augmenticity section \ref{secB4:aug}, it turns out that the choice of exit reactions deserves particular attention.

We conclude with a discussion, in section \ref{sec:disc}. 
We briefly address the proper adaptation to large perturbations of reaction rates, as required by knockout experiments and in control settings.
Existence and multiplicity of steady states is a related issue. 
Although this case did not appear in our present examples, we also comment on the appearance of stoichiometric subspaces.
Revisiting augmenticity, as discussed in section \ref{sec:B4:aug}, reveals a cautioning menetekel which indicates how monomolecular exit reactions for all metabolites may destroy all hierarchic influence structure. We also point at beautiful recent progress concerning monomolecular networks.
We then turn to the elegant upper estimates \cite{OkadaFiedlerMochizuki15, OkadaMochizuki16} by Okada on the influence regions within certain subnetworks, in a multimolecular setting. 
A glimpse at the beautiful matroid results by Murota, as summarized in \cite{murota2009matrices}, concludes the paper: in pioneering work, Murota has established response patterns for layered matrices more than three decades ago.

%%%%%%%%%%%%%%%%%%%%%%%%%%%%%%%%%%%%%%%%%%%%%%%%%%%%%%%%%%%

\textbf{Acknowledgments.} 
We are greatly indebted to Hiroshi~Matano and Nicola~Vas-sena for their encouraging and helpful comments on mono- and multimolecular reactions.
To Hiroshi~Matano we owe the emphasis on the Cauchy-Binet formula in section~\ref{sec4:proofs}.
Marty~Feinberg has patiently explained his many groundbreaking results on existence, uniqueness, and multiplicity of steady states.
Takashi~Okada has generously explained and shared his elegant upper estimates on influence regions, as discussed in section~\ref{sec:disc}.
Anna~Karnauhova has drawn the TCA cycles of section \ref{sec7:extca} for us, with artistry and taste.
Ulrike~Geiger has tirelessly typeset and corrected several versions of the first manuscript.
This work has been supported by the Deutsche Forschungsgemeinschaft, SFB 910 ``Control of Self-Organizing Nonlinear Systems''.

%%%%%%%%%%%%%%%%%%%%%%%%%%%%%%%%%%%%%%%%%%%%%%%%%%%%%%%%%%%

\section{Main results}
\label{sec2:mainres}

In this section we present our main results, theorems~\ref{thm:2.1}--\ref{thm:2.7}.
For background notation and an outline see section~\ref{sec1:intro}.
Throughout this paper, and in all theorems, we assume surjectivity \eqref{eq:1.9} of the stoichiometric matrix $S$ defined in \eqref{eq:1.4}.
After our analysis of this condition in theorem~\ref{thm:2.1} we also assume  the reaction network \eqref{eq:1.1} to be regular, i.e. the nondegeneracy condition
	\begin{equation}
	\det (SR)\neq 0
	\label{eq:2.1}
	\end{equation}
holds, algebraically, for the rate Jacobian $SR$ which is the product of the stoichiometric matix $S$ with the rate matrix $R$; see also \eqref{eq:1.3} and  \eqref{eq:1.19}.
We repeat and emphasize that, here and below, any nonzero quantities in assumptions or conclusions are understood in the algebraic, polynomial sense as explained in the introduction, section \ref{sec1:intro}.

Recall that we have required full rank of $S$ in \eqref{eq:1.9}, i.e. surjectivity $\mathcal{M} = \text{range}\; S$.
Consider any subset $\mathcal{E}'$ of $\mathcal{E}$.
We say that $\mathcal{E}'$ \emph{selects an} $S$-\emph{basis} of the stoichiometric matrix $S$: $\mathcal{E} \rightarrow \mathcal{M}$, if the columns $\mathcal{E}'$ of $S$ are a basis of range $S$.
This means $|\mathcal{E}'| = |\mathcal{M}| = M$ and $\det S^{\mathcal{E}'} \neq 0$ for the square minor $S^{\mathcal{E}'}$ of $S$ defined by the columns $\mathcal{E}'$ of $S$.
In other words,
	\begin{equation}
	\ker S^{\mathcal{E}'} = \ker S \cap \mathcal{E}' = \{ 0\}\,.
	\label{eq:2.4a}
	\end{equation}
According to our notation convention \eqref{eq:1.2} this means that $S$: $\mathbb{R}^E \rightarrow \mathbb{R}^M$ does not possess any nontrivial kernel vector supported only in $\mathcal{E}' \subseteq \mathcal{E}$.

\begin{thm}
Let $S$ be surjective, as required in \eqref{eq:1.9}.
Then the reaction network \eqref{eq:1.1} is regular, algebraically, i.e. the nondegeneracy condition $\det(SR) \neq 0$ of \eqref{eq:2.1} holds, algebraically, if and only if there exists a child selection $J$: $\mathcal{M} \rightarrow\mathcal{E}$ such that $J(\mathcal{M})$ selects an $S$-basis, i.e.
	\begin{equation}
	\ker S \cap J(\mathcal{M}) = \{ 0\}\,.
	\label{eq:2.4b}
	\end{equation}
\label{thm:2.1}
\end{thm}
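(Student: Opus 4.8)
The plan is to expand the polynomial $\det(SR)$ by the Cauchy--Binet formula and then simply read off its monomials in the formal variables $r_{jm}$. Since $S$ is $M\times E$ and $R$ is $E\times M$, Cauchy--Binet gives
\[
\det(SR)=\sum_{\substack{K\subseteq\mathcal{E}\\ |K|=M}}\det\bigl(S^{K}\bigr)\,\det\bigl(R_{K}\bigr),
\]
where $S^{K}$ is the square minor of $S$ on the columns $K$ and $R_{K}$ is the square minor of $R$ on the rows $K$. The factors $\det(S^{K})$ are constants fixed by the stoichiometry alone; all dependence on the rate variables is confined to the $\det(R_{K})$. This is exactly why Cauchy--Binet separates the purely combinatorial content of \eqref{eq:1.19} from the rate data.

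Next I would expand each $\det(R_{K})$ over the bijections $\phi\colon\mathcal{M}\to K$,
\[
\det\bigl(R_{K}\bigr)=\sum_{\phi}\operatorname{sgn}(\phi)\prod_{m\in\mathcal{M}}r_{\phi(m),m}.
\]
By the support rule \eqref{eq:1.5}, the factor $r_{\phi(m),m}$ vanishes identically unless $m\vdash\phi(m)$. Hence $\phi$ contributes a nonzero monomial precisely when $m\vdash\phi(m)$ for every $m$, that is, precisely when $\phi$ is a child selection $J$ with image $J(\mathcal{M})=K$. Substituting back and summing over all $K$ collapses the double sum into a single sum over all child selections $J\colon\mathcal{M}\to\mathcal{E}$,
\[
\det(SR)=\sum_{J}\operatorname{sgn}(J)\,\det\bigl(S^{J(\mathcal{M})}\bigr)\prod_{m\in\mathcal{M}}r_{J(m),m}.
\]

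The decisive observation, which I expect to be the crux, is that distinct child selections produce distinct monomials. Because $J$ is injective the pairs $(J(m),m)$ are pairwise distinct; and if $J_{1}\neq J_{2}$ they differ at some metabolite $m$, so their monomials differ in a variable $r_{\cdot,m}$. Treating the $r_{jm}$ as algebraically independent, there is therefore no cancellation between the summands: the coefficient of $\prod_{m}r_{J(m),m}$ in $\det(SR)$ equals $\operatorname{sgn}(J)\,\det(S^{J(\mathcal{M})})$ exactly. Consequently $\det(SR)$ is a nonzero polynomial if and only if at least one of these coefficients is nonzero, which (since $\operatorname{sgn}(J)=\pm1$) holds if and only if there exists a child selection $J$ with $\det(S^{J(\mathcal{M})})\neq0$.

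Finally I would identify this last condition with the assertion of the theorem: by the discussion around \eqref{eq:2.4a}--\eqref{eq:2.4b}, $\det(S^{J(\mathcal{M})})\neq0$ is precisely the statement that $J(\mathcal{M})$ selects an $S$-basis, $\ker S\cap J(\mathcal{M})=\{0\}$. The degenerate case in which no child selection exists is handled automatically: then the sum above is empty, so $\det(SR)\equiv0$ and no $S$-basis can be child-selected, and both sides of the equivalence fail together. The only genuinely delicate point is the non-cancellation argument of the previous paragraph; once the algebraic independence of the $r_{jm}$ is invoked, the equivalence reads off directly from the monomial expansion.
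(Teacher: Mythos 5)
Your proposal is correct and follows essentially the same route as the paper: Cauchy--Binet over the $M$-element column subsets $\mathcal{E}'\subseteq\mathcal{E}$, expansion of each $\det R_{\mathcal{E}'}$ over bijections $J\colon\mathcal{M}\to\mathcal{E}'$, identification of the surviving monomials $\mathbf{r}^J$ with child selections via the support rule \eqref{eq:1.5}, and the coefficient formula $a_J=\operatorname{sgn}J\cdot\det S^{J(\mathcal{M})}$. Your explicit non-cancellation argument (distinct child selections yield distinct monomials) is the right justification for the step the paper leaves implicit.
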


See \eqref{eq:2.2}, \eqref{eq:2.3} for the definition of child selections $J$. Henceforth, and throughout the rest of the paper, we will assume $\det (SR) \neq 0$, i.e. the existence of such a kernel-free child selection $J$.

The meaning of the child selection $J$ will become more evident in the proof; see section~\ref{sec4:proofs}.
We will in fact show that $\det(SR)$ can be written as a polynomial
	\begin{equation}
	\det (SR) = \sum_J a_J \mathbf{r}^J\,.
	\label{eq:2.4c}
	\end{equation}
Here the sum runs over all child selections $J$, and $\mathbf{r}^J$ abbreviates the monomial
	\begin{equation}
	\mathbf{r}^J := \prod_{m \in \mathcal{M}} r_{J(m),\,m}\ .
	\label{eq:2.4d}
	\end{equation}
Up to sign, the coefficient $a_J$ abbreviates the determinant
	\begin{equation}
	a_J = \pm \det S^{J(\mathcal{M})}\,,
	\label{eq:2.4e}
	\end{equation}
where $S^{J(\mathcal{M})}$ is the $ M \times M$ minor of the stoichiometric $S$-columns $J(\mathcal{M})$.
Of course condition \eqref{eq:2.4b} is equivalent to a (truly) nonzero 	coefficient $a_J$ of the algebraic monomial $\mathbf{r}^J$.

Let us briefly illustrate the role of single children $j^*$ in our nondegeneracy setting $\det (SR) \neq 0$.
We call a reaction $j^* \in \mathcal{E}$ a \emph{single child}, if there exists an input metabolite $m^* \vdash j^*$ such that $m^* \vdash j$ implies $j=j^*$. 
In other words, the single child $j^*$ is the only child reaction of some mother metabolite $m^*$ of $j^*$. 
In such a case
	\begin{equation}
	J(m^*) = j^*
	\label{eq:2.4f}
	\end{equation}		
holds for any child selection $J$.

Let the reaction $j^*$ be a single child.
Our standing assumption $\det (SR) \neq 0$ and theorem~\ref{thm:2.1} then imply the existence of a child selection $J$.
By definition, the child selection $J$ must be injective. 
Hence, by \eqref{eq:2.4f}, there cannot exist any other single-child parent $m \neq m^*$ of $j^*$, i.e. $m \vdash j^*$, with the same reaction $j^*$ as its own single child.
Therefore the mother $m^*$, of which $j^*$ is a single child, is determined uniquely by the single child $j^*$ in our setting of $\det (SR) \neq 0$.

The next theorem characterizes flux influences $j^* \leadsto j'$ in terms of swapped child selections.
We recall definition \eqref{eq:1.16} of the flux response $\Phi_{j'}^{j^*}$, and how $j^* \leadsto j'$ means $\Phi_{j'}^{j^*} \neq 0$, algebraically.	
	
\begin{thm}
Assume $\det (SR) \neq 0$ holds, algebraically, as asserted by theorem~\ref{thm:2.1}.
Then flux influence is characterized as follows.
\begin{itemize}
	\item[(i)] Self-influence $j^* \leadsto j^*$ occurs, if and only if there exists a child selection \mbox{$J$: $\mathcal{M} \rightarrow \mathcal{E} \smallsetminus \{ j^*\}$} such that $J(\mathcal{M})$ selects an $S$-basis, i.e.
	\begin{equation}
	j^* \not\in J(\mathcal{M}) \quad \mathrm{and}
	\quad \ker S \cap J(\mathcal{M}) = \{ 0\}\,.
	\label{eq:2.5}
	\end{equation}
	\item[(ii)] Influence $j^* \leadsto j' \neq j^*$ occurs, if and only if there exists a child selection \mbox{$J$: $ \mathcal{M} \rightarrow \mathcal{E}$} such that $j^* \not\in J(\mathcal{M}) \ni j'$ and the \emph{swapped set} $\{ j^*\} \cup J(\mathcal{M}) \smallsetminus \{ j'\}$ selects an $S$-basis, i.e.
	\begin{equation}
	j^* \not\in J(\mathcal{M}) \ni j' \quad \mathrm{and}
	\quad \ker S \cap \big( \{ j^*\} \cup J(\mathcal{M})
	\smallsetminus \{j'\}\big) = \{0\}\,.
	\label{eq:2.6}
	\end{equation}
\end{itemize}
Both cases may occur for the same perturbation $j^*$.
\label{thm:2.2}
\end{thm}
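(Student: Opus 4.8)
The plan is to solve the linearized steady-state equation explicitly, and then to read off the nonvanishing of each flux response as the nonvanishing of a single polynomial in the rate variables $r_{jm}$, expanded over child selections exactly as in theorem~\ref{thm:2.1}. Combining the flux balance \eqref{eq:1.17} with \eqref{eq:1.13} and \eqref{eq:1.14} gives $SR\,\delta x^{j^*} = -S^{j^*}$, hence $\delta x^{j^*} = -(SR)^{-1}S^{j^*}$ and, by \eqref{eq:1.16},
	\begin{equation}
	\Phi^{j^*} = e_{j^*} - R(SR)^{-1}S^{j^*}\,.
	\end{equation}
Here $\det(SR)\neq 0$ holds algebraically, by theorem~\ref{thm:2.1}, and serves as the common denominator. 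Clearing it, $\Phi^{j^*}_{j'}\neq 0$ holds algebraically if and only if the numerator polynomial $N^{j^*}_{j'} := \det(SR)\,\Phi^{j^*}_{j'}$ is nonzero as a polynomial in the $r_{jm}$. The whole proof thus reduces to computing $N^{j^*}_{j'}$ and displaying its monomial expansion over child selections.

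Next I would compute $N^{j^*}_{j'}$ by Cramer's rule followed by the Cauchy--Binet formula, mirroring the derivation of \eqref{eq:2.4c}. Writing $(SR)^{(m)}$ for $SR$ with its $m$-th column replaced by $S^{j^*}$, Cramer gives $\delta x^{j^*}_m = -\det(SR)^{(m)}/\det(SR)$, and since the $m$-th column of $SR$ is $S$ applied to the $m$-th column of $R$, one has $(SR)^{(m)} = S R^{(m)}$, where $R^{(m)}$ is $R$ with its $m$-th column replaced by $e_{j^*}$. For $j'\neq j^*$ this yields $N^{j^*}_{j'} = -\sum_{m} r_{j'm}\det(S R^{(m)})$. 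Applying Cauchy--Binet, $\det(SR^{(m)}) = \sum_{\mathcal{E}'}\det S^{\mathcal{E}'}\det R^{(m)}_{\mathcal{E}'}$, summed over $M$-element column sets $\mathcal{E}'$ with $R_{\mathcal{E}'}$ the corresponding square row-minor of $R$, and interchanging sums, the inner sum $\sum_m r_{j'm}\det R^{(m)}_{\mathcal{E}'}$ is a cofactor expansion that replaces the row $j^*$ of $R_{\mathcal{E}'}$ by the row $r_{j'}$. This vanishes unless $j^*\in\mathcal{E}'$ (otherwise the replaced column is zero) and $j'\notin\mathcal{E}'$ (otherwise the row $r_{j'}$ is repeated), and in the surviving case equals $\pm\det R_{(\mathcal{E}'\smallsetminus\{j^*\})\cup\{j'\}}$. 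Reindexing by the swapped set $\mathcal{E}'' := (\mathcal{E}'\smallsetminus\{j^*\})\cup\{j'\}$, so that $\mathcal{E}' = \{j^*\}\cup\mathcal{E}''\smallsetminus\{j'\}$, collects $N^{j^*}_{j'}$ as a sum over $\mathcal{E}''$ with $j'\in\mathcal{E}''\not\ni j^*$.

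Then I would finish by expanding each $\det R_{\mathcal{E}''}$ into its permutation monomials, which, exactly as in the derivation of \eqref{eq:2.4d}, are in bijection with the child selections $J$ having image $J(\mathcal{M}) = \mathcal{E}''$, each contributing the distinct monomial $\mathbf{r}^J$. This gives
	\begin{equation}
	N^{j^*}_{j'} = \sum_{J:\ j^*\notin J(\mathcal{M})\ni j'} b_J\,\mathbf{r}^J\,,\qquad b_J = \pm\det S^{\{j^*\}\cup J(\mathcal{M})\smallsetminus\{j'\}}\,.
	\end{equation}
Because the $r_{jm}$ are algebraically independent and $J\mapsto\mathbf{r}^J$ is injective on child selections, no cancellation can occur between distinct $J$, so $N^{j^*}_{j'}\neq 0$ algebraically if and only if some $b_J\neq 0$, i.e. if and only if the swapped set selects an $S$-basis in the sense of \eqref{eq:2.4a}. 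This is precisely \eqref{eq:2.6}, proving (ii). For the self-influence case (i), $j'=j^*$, the Kronecker term survives and $N^{j^*}_{j^*} = \det(SR) - \sum_m r_{j^*m}\det(SR)^{(m)}$; the same cofactor computation, now replacing the row $j^*$ of $R_{\mathcal{E}'}$ by itself, identifies $\sum_m r_{j^*m}\det(SR)^{(m)}$ with the part of the Cauchy--Binet expansion of $\det(SR)$ carried by the column sets $\mathcal{E}'\ni j^*$. Subtraction leaves exactly the complementary part, $N^{j^*}_{j^*} = \sum_{J:\ j^*\notin J(\mathcal{M})}(\pm\det S^{J(\mathcal{M})})\,\mathbf{r}^J$, which is algebraically nonzero if and only if some child selection $J$ with $j^*\notin J(\mathcal{M})$ selects an $S$-basis; this is \eqref{eq:2.5}.

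The main obstacle is the determinant bookkeeping in the middle step: one must verify the two vanishing patterns (zero replaced column when $j^*\notin\mathcal{E}'$, repeated row when $j'\in\mathcal{E}'$) and confirm that the surviving cofactor terms genuinely reassemble into the $M\times M$ minor $\det R_{\mathcal{E}''}$ over the swapped index set. The signs $\pm$ need not be tracked for the conclusion, precisely because each child selection $J$ contributes a single, distinct monomial $\mathbf{r}^J$, so nonvanishing is decided coefficient by coefficient without interference. Finally, the closing assertion that both (i) and (ii) may hold for the same $j^*$ is immediate, since \eqref{eq:2.5} and \eqref{eq:2.6} are logically independent conditions on child selections and can be simultaneously satisfiable, as the examples of section~\ref{sec3:theoex} confirm.
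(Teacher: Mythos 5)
Your proof is correct and follows essentially the same route as the paper: both arguments clear the denominator $\det(SR)$, apply Cramer's rule and the Cauchy--Binet formula, discard the terms with a zero replaced column ($j^*\notin\mathcal{E}'$) or a repeated row ($j'\in\mathcal{E}'$), and conclude from the injectivity of $J\mapsto\mathbf{r}^J$ that nonvanishing is decided by the single coefficient $\pm\det S^{\{j^*\}\cup J(\mathcal{M})\smallsetminus\{j'\}}$. The only difference is organizational --- you factor the Cramer numerator as $\det(SR^{(m)})$ and apply Cauchy--Binet once to $M\times M$ minors, whereas the paper works entrywise with the adjugate of $SR$ and $(M-1)$-subsets before prepending the column $S^{j^*}$ and row $R_{j'}$ --- but the two computations are term-by-term equivalent.
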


To illustrate theorem \ref{thm:2.2} we observe that \emph{single children have no flux influence}.
Indeed let $m^* \vdash j^*$ be the unique single-child mother of the single child $j^*$.
Then \eqref{eq:2.4f} implies $J(m^*) = j^*$.
This contradicts both \eqref{eq:2.5} and \eqref{eq:2.6}, and hence prohibits flux influence $j^* \leadsto j$ of the single child $ j^*$ on any $j \in \mathcal{E}$, including itself.
We will see many examples of this simple principle later.

To characterize metabolite influences $j^* \leadsto m' \in \mathcal{M}$ we have to define \emph{partial child selections} $J^\vee$: $ \mathcal{M} \smallsetminus \{ m'\} \rightarrow \mathcal{E}$.
We require injectivity and the input property $m \vdash J(m)$ as in \eqref{eq:2.3}, verbatim but just for all metabolites $m \neq m'$, of course.

\begin{thm}
Assume $\det (SR)\neq 0$ holds, algebraically, as asserted by theorem~\ref{thm:2.1}.
Then metabolite influence $j^* \leadsto m'$ occurs, if and only if there exists a partial child selection $ J^\vee$: $ \mathcal{M} \smallsetminus \{ m'\} \rightarrow \mathcal{E} \smallsetminus \{j^*\}$, such that the \emph{augmented reaction set} $\{j^*\} \cup J^\vee (\mathcal{M} \smallsetminus \{m'\})$ selects an $S$-basis, i.e.
	\begin{equation}
	j^* \not\in J^\vee( \mathcal{M} \smallsetminus \{m'\}) \quad
	\mathrm{and} \quad \ker S \cap \big(\{ j^*\} \cup 
	J^\vee (\mathcal{M} \smallsetminus \{ m'\})\big) = \{0\}\,.
	\label{eq:2.7}
	\end{equation}
\label{thm:2.3}
\end{thm}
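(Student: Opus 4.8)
The plan is to reduce metabolite influence to the nonvanishing of a single determinant via Cramer's rule, and then to expand that determinant by multilinearity exactly as in the proof of theorem~\ref{thm:2.1}, so that the partial child selections appear as the surviving monomials.

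First I would record the linear response explicitly. Combining \eqref{eq:1.13} with \eqref{eq:1.14} gives $SR\,\delta x^{j^*} = -S^{j^*}$, where $S^{j^*}$ is the $j^*$-th column of $S$. Since $\det(SR)\neq 0$ algebraically, by our standing assumption, the sensitivity is $\delta x^{j^*} = -(SR)^{-1}S^{j^*}$, and Cramer's rule expresses its $m'$-th component as
\[
\delta x^{j^*}_{m'} = -\frac{\det A_{m'}}{\det(SR)}\,,
\]
where $A_{m'}$ denotes the $M\times M$ matrix obtained from $SR$ by replacing its $m'$-th column with $S^{j^*}$. Because the denominator is algebraically nonzero, $j^*\leadsto m'$ holds, algebraically, if and only if $\det A_{m'}\neq 0$ as a polynomial in the partial derivatives $r_{jm}$.

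Next I would identify $A_{m'}$ as a product amenable to Cauchy--Binet. Let $C$ be the $E\times M$ matrix agreeing with the rate matrix $R$ in every column $m\neq m'$, but with its $m'$-th column replaced by the unit vector $e_{j^*}\in\mathbb{R}^E$. Then $A_{m'}=SC$: indeed $S R^m=(SR)^m$ for $m\neq m'$, while $S e_{j^*}=S^{j^*}$ supplies the column $m'$. Expanding $\det(SC)$ by multilinearity in the columns $m\neq m'$, each of which equals $\sum_{j:\,m\vdash j} S^j\, r_{jm}$, produces
\[
\det A_{m'} = \sum_{J^\vee} \Big(\prod_{m\neq m'} r_{J^\vee(m),m}\Big)\, \det\big[\,S^{J^\vee(m)}\ (m\neq m'),\ S^{j^*}\,\big]\,,
\]
the sum taken over all maps $J^\vee:\mathcal{M}\smallsetminus\{m'\}\to\mathcal{E}$ with $m\vdash J^\vee(m)$. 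Any term in which two of the selected columns coincide carries a repeated column and hence vanishes; in particular all terms with $j^*\in J^\vee(\mathcal{M}\smallsetminus\{m'\})$, or with $J^\vee$ non-injective, drop out. The surviving coefficient equals $\pm\det S^{\{j^*\}\cup J^\vee(\mathcal{M}\smallsetminus\{m'\})}$, up to the sign from reordering the $M$ columns.

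Finally I would rule out cancellation between distinct terms. The monomial $\prod_{m\neq m'} r_{J^\vee(m),m}$ records, for each metabolite $m\neq m'$, the unique factor with second index $m$ and first index $J^\vee(m)$; it therefore determines $J^\vee$ uniquely, so distinct injective $J^\vee$ yield distinct monomials. Hence $\det A_{m'}$ is algebraically nonzero if and only if some surviving term has a nonzero coefficient, i.e. there exists an injective partial child selection $J^\vee:\mathcal{M}\smallsetminus\{m'\}\to\mathcal{E}\smallsetminus\{j^*\}$ with $\det S^{\{j^*\}\cup J^\vee(\mathcal{M}\smallsetminus\{m'\})}\neq 0$, which is precisely condition \eqref{eq:2.7}. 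The only real work is the bookkeeping of this multilinear expansion --- matching surviving monomials bijectively with the augmented partial child selections and verifying the no-cancellation claim. This is the metabolite analogue of the reaction-column swap already exploited for flux influence in theorem~\ref{thm:2.2}; the conceptual content is identical, and only the role of the swapped-out column --- a metabolite $m'$ removed from the domain, rather than a reaction $j'$ removed from the range --- changes.
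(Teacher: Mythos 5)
Your proposal is correct and follows essentially the same route as the paper: reduce $j^*\leadsto m'$ via Cramer's rule to the nonvanishing of the determinant of $SR$ with column $m'$ replaced by $S^{j^*}$, expand that determinant into monomials $\mathbf{r}^{J^\vee}$ indexed by partial child selections with coefficients $\pm\det S^{\{j^*\}\cup J^\vee(\mathcal{M}\smallsetminus\{m'\})}$, and observe that distinct selections give distinct monomials so no cancellation occurs. The only (harmless) difference is packaging: the paper routes the computation through the block matrix $B$, its explicit inverse, and the Cauchy--Binet formula applied to $S_{\mathcal{M}\smallsetminus m}R^{\mathcal{M}\smallsetminus m'}$, whereas you obtain the same expansion in one step by writing the Cramer matrix as $SC$ and using multilinearity of the determinant in its columns.
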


Again the above \emph{single child case} $m^* \vdash j^*$ with unique single-child mother $m^*$ of the single child $j^*$ is instructive.
We claim that \emph{single children only influence their own mother}.
Indeed consider $m' = m^*$, first.
We have assumed $\det (SR) \neq 0$.
By theorem~\ref{thm:2.1} this provides a kernel-free child selection $J$: $\mathcal{M} \rightarrow\mathcal{E}$, see \eqref{eq:2.3}.
Moreover $J(m^*) = j^*$, by \eqref{eq:2.4f}.
Define the partial child selection $J^\vee$ as the restriction of $J$ to $\mathcal{M} \smallsetminus \{m^*\}$.
Then \eqref{eq:2.7} follows from \eqref{eq:2.4b}.
Hence $j^* \leadsto m^*$ influences its unique single-child mother $m^*$, by theorem \ref{thm:2.3}.
Next consider $m' \neq m^*$.
Then $m^* \in \mathcal{M} \smallsetminus \{m'\}$ implies $j^* = J^\vee(m^*) \in J^\vee (\mathcal{M} \smallsetminus \{ m'\})$,  for any partial child selection $J^\vee$ on $\mathcal{M} \smallsetminus \{m'\}$, again by \eqref{eq:2.4f}.
This prevents any metabolite influence of the single child $j^*$ of $m^*$, other than $ j^* \leadsto m^*$.

In the introduction we have emphasized how transitivity \eqref{eq:1.18trans} of flux influence \eqref{eq:1.18} is at the heart of any notion like ``hierarchy of influence'' in reaction networks. In the monomolecular case of \cite{fiedler15mono} we were able to show how transitivity of flux influence followed from a study of paths in the directed reaction graph with vertex set $\mathcal{M} \cup \{0\}$. See section \ref{sec:disc} for further discussion. We now formulate our
\emph{transitivity theorem} in a multimolecular setting. Instead of any reaction di-graph, our proof in section~\ref{sec5:trans} will involve direct differentiation with respect to an intermediate variable $r_{jm}$.

\begin{thm}
Assume $\det (SR) \neq 0$ holds, algebraically, as asserted in theorem~\ref{thm:2.1}.
Then the following two transitivity properties hold.
\begin{itemize}
	\item[(i)] 	Assume that $\alpha \in \mathcal{E} \cup \mathcal{M}$ influences an input metabolite $m$ of reaction $j$, and $j$ influences $	\beta \in \mathcal{E} \cup \mathcal{M}$. In symbols,
	\begin{equation}
	\alpha \leadsto m \vdash j \leadsto \beta\,.
	\label{eq:2.8}
	\end{equation}
	Then $\alpha \leadsto \beta$, that is, $\alpha$ influences $\beta$.
	\item[(ii)] Assume that $\alpha \in \mathcal{E} \cup \mathcal{M}$ influences reaction $j$, and $j$ influences $\beta \in \mathcal{E} \cup \mathcal{M}$. In symbols,
	\begin{equation}
	\alpha \leadsto j \leadsto \beta\,.
	\label{eq:2.9}
	\end{equation}
	Then $\alpha \leadsto \beta$, that is, $\alpha$ influences $\beta$.
\end{itemize}	
\label{thm:2.4}
\end{thm}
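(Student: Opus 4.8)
\emph{The plan is to reduce both transitivity statements to a single product identity, obtained by differentiating the response vector with respect to exactly one intermediate rate coefficient $r_{jm}$ --- the variable contributed by the middle reaction $j$.} First I would record the response in closed form. Solving the flux balance \eqref{eq:1.13}--\eqref{eq:1.17} gives $\delta x^{\alpha} = -(SR)^{-1} b^{\alpha}$, where the forcing vector is $b^{\alpha}=S e_{j^*}$ for a reaction source $\alpha=j^*$ and $b^{\alpha}=e_{m^*}\in\mathbb{R}^M$ for a metabolite source $\alpha=m^*$; in both cases $\Phi^\alpha = c^\alpha + R\,\delta x^\alpha$ with a constant term $c^\alpha$ ($c^{j^*}=e_{j^*}$, resp. $c^{m^*}=0$), cf. \eqref{eq:1.16}. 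Thus the whole response vector $\rho^\alpha:=(\Phi^\alpha,\delta x^\alpha)$ is a rational function of the free variables $r_{jm}$, $m\vdash j$, and $\alpha\leadsto\beta$ means precisely that its $\beta$-component $\rho^\alpha_\beta$ is not the zero rational function.

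The crucial step is the differentiation identity. For a fixed input $m\vdash j$ the coefficient $r_{jm}$ is a genuine free variable, and a short computation gives
\[
\partial_{r_{jm}}(SR) = S^j e_m^{\mathsf T}, \qquad
\partial_{r_{jm}}(SR)^{-1} = -(SR)^{-1} S^j e_m^{\mathsf T} (SR)^{-1}.
\]
Substituting this into $\delta x^\alpha=-(SR)^{-1}b^\alpha$, and recognizing the scalar $e_m^{\mathsf T}(SR)^{-1}b^\alpha = -\delta x^{\alpha}_m$ together with $(SR)^{-1}S^j = -\delta x^j$, the two factors collapse to a clean product. The flux part behaves identically: $c^\alpha$ is constant in $r_{jm}$ and drops out, while $(\partial_{r_{jm}}R)\,\delta x^\alpha = \delta x^\alpha_m\, e_j$ recombines with $R\,\partial_{r_{jm}}\delta x^\alpha$ to yield $\delta x^\alpha_m\,\Phi^j$. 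I therefore expect to obtain the master identity
\[
\partial_{r_{jm}}\, \rho^\alpha \;=\; \delta x^\alpha_m \cdot \rho^j,
\qquad\text{i.e.}\qquad
\partial_{r_{jm}}\, \rho^\alpha_\beta \;=\; \delta x^\alpha_m \cdot \rho^j_\beta
\]
for every target $\beta\in\mathcal{E}\cup\mathcal{M}$. Getting the bookkeeping of these two contributions to line up into the single product $\delta x^\alpha_m\,\rho^j$ is, I expect, the main obstacle and the entire content of the theorem.

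Granting the identity, part~(ii.i) is immediate. The hypotheses $\alpha\leadsto m$, $m\vdash j$, $j\leadsto\beta$ say respectively that $\delta x^\alpha_m\not\equiv 0$, that $r_{jm}$ is an admissible differentiation variable, and that $\rho^j_\beta\not\equiv 0$. Hence $\partial_{r_{jm}}\rho^\alpha_\beta = \delta x^\alpha_m\,\rho^j_\beta\not\equiv 0$, as a product of two nonzero rational functions. A rational function whose partial derivative is not identically zero cannot itself be identically zero (contrapositive of $f\equiv 0 \Rightarrow \partial f\equiv 0$); therefore $\rho^\alpha_\beta\not\equiv 0$, which is exactly $\alpha\leadsto\beta$.

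Finally I would deduce part~(ii) from part~(i). If $\alpha=j$ the claim is trivial, so assume $\alpha\neq j$; then the constant term $c^\alpha_j$ vanishes (it equals $\delta_{\alpha j}=0$ for a reaction source $\alpha\neq j$, and $0$ for a metabolite source), so that $\Phi^\alpha_j=\sum_{m\vdash j} r_{jm}\,\delta x^\alpha_m$. Since $\alpha\leadsto j$ gives $\Phi^\alpha_j\not\equiv 0$, the coefficients $\delta x^\alpha_m$, $m\vdash j$, cannot all vanish identically, so there is some input $m\vdash j$ with $\alpha\leadsto m$. Thus $\alpha\leadsto m\vdash j\leadsto\beta$, and part~(i) yields $\alpha\leadsto\beta$. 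This reduction, together with the master identity, completes both statements.
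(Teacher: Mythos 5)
Your proposal is correct and takes essentially the same route as the paper: differentiate the response with respect to the intermediate rate coefficient $r_{jm}$, obtain the product identity $\partial_{r_{jm}} z^\alpha_\beta = z^\alpha_m\, z^j_\beta$, and conclude nonvanishing of $z^\alpha_\beta$ from nonvanishing of its derivative; the reduction of (ii) to (i) via $\Phi^\alpha_j=\sum_{m\vdash j} r_{jm}\,\delta x^\alpha_m$ is also identical. The only cosmetic difference is that the paper packages the computation through the block matrix $B$ of \eqref{eq:3.2}, using $\partial_{r_{jm}}B=e_j e_m^{\mathsf T}$, which yields your ``master identity'' for flux and metabolite components in a single line rather than by separate bookkeeping.
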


Consider transitivity theorem \ref{thm:2.4}(ii) for the special case of just reactions $\alpha = j^*$ and $\beta = j'$. Then the theorem establishes transitivity of flux influence, as claimed in \eqref{eq:1.18trans} above, for the general multimolecular case.

Although we have admitted metabolites $\alpha \in \mathcal{M}$ in our transitivity theorem, we have not even defined yet what an influence $m^* \leadsto m$ or $m^*  \leadsto j$ of a metabolite $m^*$  on a metabolite $m$ or a reaction $j$ is supposed to mean.
Loosely speaking, such influences describe an algebraically nonzero sensitivity response of the steady state $x$ under the addition of an artificial constant external feed of metabolite $m^*$.
For a precise definition see \eqref{eq:3.4},~\eqref{eq:3.5} below, and the discussion there.

Our next goal is a description of all \emph{flux influence sets}
	\begin{equation}
	I_{\mathcal{E}}(j^*)
	:= \{ j' \in \mathcal{E}\,|\, j^* \leadsto j'\}\,,
	\label{eq:2.10}
	\end{equation}
and of all \emph{metabolite influence sets}
	\begin{equation}
	I_{\mathcal{M}}(j^*) = \{ m' \in \mathcal{M}\,|\,j^* 
	\leadsto m'\}\,,
	\label{eq:2.11}
	\end{equation}
for rate perturbations of any single reaction $j^*$.
Of course theorems~\ref{thm:2.2} and~\ref{thm:2.3} characterize these sets.
For single children $m^* \vdash j^*$ we have seen $I_{\mathcal{E}} (j^*) =\emptyset$ and $I_{\mathcal{M}}(j^*)= \{m^*\}$.

Transitivity theorem~\ref{thm:2.4} suggests a \emph{hierarchy of influence} for the influence sets. To describe this hierarchy, we
first construct the directed \emph{pure flux influence graph} $F$ as follows.
We start from an \emph{equivalence relation} $\approx
$ on $j \in \mathcal{E}$, defined by artificial reflexivity
	\begin{equation}
	j \approx j\,, \quad \text{for all} \quad j\,,
	\label{eq:2.20}
	\end{equation}
and by mutual influence between different reactions $j_1 \neq j_2$:
	\begin{equation}
	j_1 \approx j_2 , \quad \text{if}
	\quad j_1 \leadsto j_2 \quad \text{and}
	\quad j_2 \leadsto j_1\,.
	\label{eq:2.21}
	\end{equation}
Note that reflexivity \eqref{eq:2.20} may, or may not, be founded on actual self-influence $j \leadsto j$.
Our definition generously glosses over this delicate point.

The equivalence classes of $\approx$ are the vertices of the pure flux influence graph $F$.
In general, we denote an equivalence class $\varphi$ by brackets
	\begin{equation}
	\varphi =\langle j_1, j_2, \ldots \rangle
	\label{eq:2.22}
\end{equation}		
and call it a \emph{(mutual) flux influence class}.
Note how any reaction $j \in \varphi$ actually influences itself, by transitivity, whenever the class $\varphi$ contains at least two elements.
The subtle case of single element equivalence classes, however, comes in two flavors.
We write such single element classes $\varphi$ of $j$ as
	\begin{equation}
	\varphi=
	\left\lbrace 
	\begin{aligned}
	\langle &j\rangle\,, 
	\qquad &\text{if}\quad j \leadsto j\,,\\
	&j\,,
	\qquad &\text{otherwise}\,.	
	\end{aligned}
	\right.
	\label{eq:2.23}
	\end{equation}
We simply omit the brackets $\langle \,\rangle$ in case $j$ does not influence itself.
This is the case where reflexivity $j \approx j$ was generously decreed by mathematical \emph{fiat}, in \eqref{eq:2.20}, in spite of lacking actual self-influence.

Transitivity defines a partial order on the equivalence classes, by actual directed influence.
We emphasize this important point. Abstractly, any block triangularization of a matrix can be
stylized into a purely formal ``partial order'' of the blocks. Only in the presence of a transitivity
result, however, the formal partial order becomes one of actual influence. Mathematically,
this would correspond to the additional assertion that the blocks in the triangularization
are fully occupied by nonzero elements.

The above partial order of actual flux influence can be expressed, equivalently, by a minimal finite directed graph $F$.
Directed paths run in the direction of, and imply, flux influence.
Since the vertices $\varphi$ are equivalence classes, $F$ does not possess any directed cycle.
More precisely, a directed path from one class vertex $\varphi^*$ to another class $\varphi'$ implies that each single $j^*$ in $\varphi^*$ influences each $j'$ in $\varphi'$ -- but there does not exist any single influence in the opposite direction.
Also, $j^*$ influences all $j'$ in its own class $\varphi^*$ -- except possibly itself; see \eqref{eq:2.23}.
Therefore the pure flux influence graph $F$, in the above notation, describes the flux influence set $I_{\mathcal{E}}(j^*)$ of a rate perturbation at any given reaction $j^*$.

To include the metabolite responses to $j^*$, i.e. the metabolite influence sets $I_{\mathcal{M}}(j^*)$, we define the \emph{(full) flux influence graph} $\mathcal{F}$, by a simple annotation at the vertices of $F$.
We keep all directed edges, as defined between the equivalence classes of the pure flux influence di-graph $F$.

To determine the metabolite influence sets $I_{\mathcal{M}}(j^*)$, we apply transitivity theorem~\ref{thm:2.4}(ii) with reactions $\alpha = j_1,\  j = j_2 \in \mathcal{E}, \ \beta = m' \in \mathcal{M}$, and observe
	\begin{equation}
	j_1 \leadsto j_2 \leadsto m' \quad \Longrightarrow \quad 
	j_1 \leadsto m'\,.
	\label{eq:2.24}
	\end{equation}
Together with flux transitivity \eqref{eq:1.18trans} this implies
	\begin{equation}
	\begin{aligned}
	j_1 \leadsto j_2  \quad &\Longrightarrow \quad 
	I_{\mathcal{M}}(j_1) 
	\supseteq I_{\mathcal{M}}(j_2)\,,\quad \text{and}\\
	j_1 \approx j_2 \quad &\Longrightarrow \quad
	I_{\mathcal{M}}(j_1) =
	I_{\mathcal{M}}(j_2)\,.
	\end{aligned}
	\label{eq:2.25}
	\end{equation}
Now fix any mutual flux influence class $\varphi^*$:= $\langle j^*, \ldots \rangle$ or $\varphi^*$:= $j^*$, i.e. any vertex of the pure flux influence graph $F$.
By \eqref{eq:2.25}, all $j^* \in \varphi^*$ share the same metabolic influence set
	\begin{equation}
	I_{\mathcal{M}}(j^*) = I _{\mathcal{M}}(\varphi^*)\,.
	\label{eq:2.26}
	\end{equation}
We define the, possibly empty, \emph{indirect metabolite influence set} $\mathcal{M}^{\neg d}(\varphi^*)$ as
	\begin{equation}
	\begin{aligned}
	\mathcal{M}^{\neg d}(\varphi^*)
	&=
	\mathcal{M}^{\neg d}(j^*) = \\
	&=\{m' \in \mathcal{M} \,|\,  j^* \leadsto  j'\leadsto m' \; \mathrm{for\ some} \; j' \not\in \varphi^*\}\,.
	\end{aligned}
	\label{eq:2.27a}
	\end{equation}
In other words, indirect influence of $j^* \in \varphi^*$ on $m'$ requires flux influence on an \emph{intermediary agent} $j'$ in another flux influence class $\varphi' \neq \varphi^*$ to exert its influence on $m$ via $j'$.
By transitivity \eqref{eq:2.24} that intermediary $j'$ mediates the (indirect) influence $j^* \leadsto m'$ This influence does not depend on the choice of the representatives $j^* \in \varphi^*$, $j' \in \varphi'$.
Conversely, the intermediary agent $j' \not\in \varphi^*$ of $j^*$ does not influence $j^* \in \varphi^*$.

Analogously we define the, possibly empty, complementary set $\mathcal{M}^d(\varphi^*)$ of \emph{direct metabolite influence}
	\begin{equation}
	\mathcal{M}^d(\varphi^*) =
	\mathcal{M}^d (j^*) :=
	I_{\mathcal{M}}(\varphi^*) \smallsetminus
	\mathcal{M}^{\neg d} (\varphi^*)	
	\label{eq:2.27b}
	\end{equation}		
to consist of all those metabolites $j^* \leadsto m'$ which are influenced by $j^* \in \varphi^*$, but cannot ever be influenced by any intermediary agent $j' \not\in \varphi^*$.
In particular we obtain the decompositions
	\begin{alignat}{2}
	I_{\mathcal{M}}(j^*) \,=\, I_{\mathcal{M}} &(\varphi^*)\,&=&\,
	\mathcal{M}^d(\varphi^*)\, \dot{\cup}\,
	\mathcal{M}^{\neg d}(\varphi^*)\,, \quad \text{and}\label{eq:2.28a}\\
	\mathcal{M}^{\neg d} &(\varphi^*) \,&=&\,
	\underset{j^* \leadsto j' \not\in \varphi^*}{\bigcup}
	\mathcal{M}^d(j')\,,
	\label{eq:2.28b}
	\end{alignat}
for $j^* \in \varphi^*$. By definition, the union in \eqref{eq:2.28a} is disjoint.
We may replace the intermediary agents $j'$ in \eqref{eq:2.28b} by a single representative in each of their mutual flux influence classes.

Note that the classes $\varphi$ of mutual flux influence form a partition of $\mathcal{E}$.
The same metabolite $m$, in contrast, may appear in several sets $\mathcal{M}^d(\varphi^*)$ of direct metabolite influence.
This is the case, if and only if there exist two distinct reactions $j_1, j_2$ such that neither influences the other, but each influences $m$.
In particular, the union in \eqref{eq:2.28b} need not be disjoint.

We define the \emph{(full) flux influence graph} $\mathcal{F}$ as follows.
The annotated vertices of $\mathcal{F}$ are given by the pairs
	\begin{equation}
	\varphi \mathcal{M}^d(\varphi)
	\label{eq:2.29}
	\end{equation}
of mutual flux influence classes $\varphi$, annotated by their direct metabolite influence sets $\mathcal{M}^d(\varphi)$; see \eqref{eq:2.22}, \eqref{eq:2.23} and \eqref{eq:2.27b}.
Edges are directed and coincide with the directed edges of the pure flux influence graph $F$ on the vertices $\varphi$.
Summarizing our above construction and discussion, we have proved the following theorem on the transitive hierarchic structure of flux influence.

\begin{thm}
Assume $\det (SR)\neq 0$ holds, algebraically, as asserted by theorem~\ref{thm:2.1}.
Define the full flux influence graph $\mathcal{F}$ as above, and let $\varphi^*$ be the flux influence class of $j^*$; see \eqref{eq:2.22}, \eqref{eq:2.23}.

Then a rate perturbation of reaction $j^*$ influences the flux of $j' \neq j^*$, i.e. $j^* \leadsto j' \neq j^*$, if and only if, either $j' \in \varphi^*$, or else there exists a directed path in $\mathcal{F}$ from the vertex $\varphi^* \mathcal{M}^d(\varphi^*)$ to the vertex $\varphi' \mathcal{M}^d(\varphi')$ of the flux influence class $j' \in \varphi' \neq \varphi^*$ of $j'$.
Equivalently, the same directed path runs from $\varphi^*$ to $\varphi'$ in the pure flux influence graph $F$.
Self-influence $j^* \leadsto j^*$ holds unless $\varphi^* =j^*$ with removed brackets; see \eqref{eq:2.23}.

A perturbation $j^*$ influences the metabolite $m' \in \mathcal{M}$; i.e. $j^* \leadsto m'$, if and only if,
\begin{itemize}
	\item[(i)] either, $m'\in \mathcal{M}^d(\varphi^*)$ is under the direct influence of the class $\varphi^*$ of $j^*$; see \eqref{eq:2.27b};
	\item[(ii)] or else, $m'\in \mathcal{M}^d(\varphi')$ is under the direct influence of some intermediary agent $j'$ in another class $\varphi' \neq \varphi^*$ such that $j^* \leadsto j'$; see \eqref{eq:2.27a}--\eqref{eq:2.28b}.
\end{itemize}

In the flux influence graph $\mathcal{F}$ this means that we pass from $j^*\in \varphi^*$, either, to $m' \in \mathcal{M}^d(\varphi^*)$ directly in the same vertex $\varphi^* \mathcal{M}^d(\varphi^*)$, or else, to $m'$ in the annotation $\mathcal{M}^d(\varphi')$ of another vertex $\varphi' \mathcal{M}^d(\varphi')$, via some directed path in $\mathcal{F}$.
\label{thm:2.7}
\end{thm}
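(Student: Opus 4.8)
The plan is to recognize Theorem~\ref{thm:2.7} as an organizational consequence of the transitivity result Theorem~\ref{thm:2.4}(ii), applied to the flux influence relation $\leadsto$ restricted to reactions, together with the definitions \eqref{eq:2.20}--\eqref{eq:2.28b}. The single structural fact driving everything is that Theorem~\ref{thm:2.4}(ii), specialized to $\alpha = j_1,\, j = j_2,\, \beta = j_3 \in \mathcal{E}$, makes $\leadsto$ a \emph{transitive} relation on $\mathcal{E}$. First I would record that the mutual-influence relation $\approx$ of \eqref{eq:2.20}--\eqref{eq:2.21} is then a genuine equivalence relation: symmetry and reflexivity are built in, and transitivity of $\approx$ follows from transitivity of $\leadsto$. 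Hence its classes $\varphi$ partition $\mathcal{E}$ and serve as the vertices of $F$ and $\mathcal{F}$.

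For the flux part I would introduce on the set of classes the relation $\varphi^* < \varphi'$, meaning $\varphi^* \neq \varphi'$ and $j^* \leadsto j'$ for some (hence, I claim, every) $j^* \in \varphi^*,\, j' \in \varphi'$. \emph{Well-definedness} is the first point to settle: if $j_1^*, j_2^* \in \varphi^*$ and $j_1', j_2' \in \varphi'$ with $j_1^* \leadsto j_1'$, then $j_2^* \leadsto j_1^* \leadsto j_1' \leadsto j_2'$ using $j_2^* \approx j_1^*$ and $j_1' \approx j_2'$, so Theorem~\ref{thm:2.4}(ii) yields $j_2^* \leadsto j_2'$. \emph{Antisymmetry} is immediate, since $\varphi^* < \varphi'$ and $\varphi' < \varphi^*$ would force mutual influence, i.e. $\varphi^* = \varphi'$; transitivity of $<$ again follows from that of $\leadsto$. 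Thus $<$ is a strict partial order, and $F$, being its transitive reduction, has the property that a directed path from $\varphi^*$ to $\varphi'$ exists precisely when $\varphi^* < \varphi'$. Combining these: for $j' \neq j^*$, an influence $j^* \leadsto j'$ either also satisfies $j' \leadsto j^*$, placing $j' \in \varphi^*$, or it does not, giving $\varphi^* < \varphi'$ and hence a directed path in $F$ (equivalently $\mathcal{F}$); conversely each situation returns $j^* \leadsto j'$. Self-influence is handled separately: if $\varphi^*$ has at least two elements, pick $j \in \varphi^*$ with $j \neq j^*$, so $j^* \leadsto j \leadsto j^*$ gives $j^* \leadsto j^*$ by transitivity; if $\varphi^* = \langle j^* \rangle$ the self-influence is recorded by \eqref{eq:2.23}, and if $\varphi^* = j^*$ with brackets removed it is absent by that same convention.

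For the metabolite part I would first invoke \eqref{eq:2.25}--\eqref{eq:2.26}, themselves consequences of \eqref{eq:2.24} and flux transitivity, to note that $I_{\mathcal{M}}$ is constant on each class, $I_{\mathcal{M}}(j^*) = I_{\mathcal{M}}(\varphi^*)$. The decomposition \eqref{eq:2.28a} into $\mathcal{M}^d(\varphi^*) \,\dot{\cup}\, \mathcal{M}^{\neg d}(\varphi^*)$ is immediate from the definition \eqref{eq:2.27b} of $\mathcal{M}^d$ as the complement of $\mathcal{M}^{\neg d}$ inside $I_{\mathcal{M}}(\varphi^*)$. The content lies in identity \eqref{eq:2.28b}. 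The inclusion $\supseteq$ is direct: $m' \in \mathcal{M}^d(\varphi') \subseteq I_{\mathcal{M}}(\varphi')$ with $j^* \leadsto j' \notin \varphi^*$ gives $j^* \leadsto j' \leadsto m'$, i.e. $m' \in \mathcal{M}^{\neg d}(\varphi^*)$ by \eqref{eq:2.27a}. Granting \eqref{eq:2.28a}--\eqref{eq:2.28b}, the metabolite assertion is exactly $j^* \leadsto m'$ iff $m' \in \mathcal{M}^d(\varphi^*)$ (case (i)) or $m' \in \mathcal{M}^d(\varphi')$ for some downstream $\varphi'$ with $j^* \leadsto j'$ (case (ii)), and the translation into $\mathcal{F}$ is verbatim.

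The one genuinely inductive step, and the place I expect the real work, is the inclusion $\subseteq$ in \eqref{eq:2.28b}: an indirect influence $m' \in \mathcal{M}^{\neg d}(\varphi^*)$ a priori only provides \emph{some} intermediary $j'$ with $j^* \leadsto j' \leadsto m'$, and $m'$ may itself be only indirectly influenced by $\varphi'$. I would resolve this by descent along the strict order $<$: if $m' \in \mathcal{M}^{\neg d}(\varphi')$ rather than $\mathcal{M}^d(\varphi')$, then \eqref{eq:2.27a} furnishes $j'' \notin \varphi'$ with $j' \leadsto j'' \leadsto m'$, whence $j^* \leadsto j''$ by transitivity and $\varphi^* < \varphi' < \varphi''$, so $\varphi''$ lies strictly further downstream. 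Because the class poset is finite and $<$ is irreflexive, this strictly ascending chain must terminate at a class $\varphi^{(k)}$ with $m' \in \mathcal{M}^d(\varphi^{(k)})$ and $j^* \leadsto j^{(k)} \notin \varphi^*$, which is precisely what $\subseteq$ asserts. The only other point needing care is the standard but necessary fact that the minimal (transitive-reduction) graph $F$ has reachability equal to $<$; this is where acyclicity of $F$, guaranteed because its vertices are $\approx$-classes, is used.
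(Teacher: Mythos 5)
Your proposal is correct and follows essentially the same route as the paper, which proves Theorem~\ref{thm:2.7} precisely as the organizational summary of the construction \eqref{eq:2.20}--\eqref{eq:2.29} built on transitivity theorem~\ref{thm:2.4}(ii). You in fact supply two details the paper leaves implicit -- the well-definedness and antisymmetry of the class order, and the finite-descent argument for the inclusion $\subseteq$ in \eqref{eq:2.28b} -- but these are exactly the checks the paper's phrase ``summarizing our above construction and discussion'' tacitly relies on.
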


We recall and emphasize that these nonzero influences are all generated, in unison and simultaneously, by flux perturbations at any single one reaction $j^*$ of the same class $\varphi^*$.

%%%%%%%%%%%%%%%%%%%%%%%%%%%%%%%%%%%%%%%%%%%%%%%%%%%%%%%%%%%

\section{Two theoretical examples}
\label{sec3:theoex}

By our standing assumption \eqref{eq:1.9}, the stoichiometric matrix $S$: $\mathcal{E} \rightarrow \mathcal{M}$ has full rank $M=|\mathcal{M}|$.
Therefore $E = |\mathcal{E}| \geq |\mathcal{M}|$:
the number $M$ of metabolites does not strictly exceed the number $E$ of reactions.
Our first example addresses the simplest case where $E=M$.
As announced in the introduction, our second example studies the (full) flux influence graph $\mathcal{F}$ for a simple hypothetical reaction network which is monomolecular, except for a single bimolecular reaction; here $E = 15 > 10 = M$.
That second example was first presented in \cite{fiedler15example} and had to be treated in an adhoc and case-by-case fashion, because the present multimolecular mathematical framework was not available at the time.
Both examples illustrate the workings of theorem~\ref{thm:2.7} about direct and indirect flux influence.

Here, and for all our proofs in sections~\ref{sec4:proofs}--\ref{sec5:trans} below, it will be convenient to rewrite the system \eqref{eq:1.16}, \eqref{eq:1.17} for the response vector $z^{j^*}$:= $(\Phi^{j^*}, \delta x^{j^*}) \in \mathcal{E} \cup \mathcal{M}$ in block matrix form as
	\begin{equation}
	B z^\alpha = -e_\alpha.
	\label{eq:3.1}
	\end{equation}
As always, we have padded the unit vector $e_\alpha = e_{j^*} \in \mathcal{E} \subseteq \mathcal{E} \cup \mathcal{M}$, for $\alpha = j^*$, with zeros in the $\mathcal{M}$-components. 
We have used the $E \times M$ block matrix
	\begin{equation}
	B:=
	\begin{pmatrix}
	-\mathrm{id_{\mathcal{E}}} & R\\S & 0
	\end{pmatrix}
	: \quad \mathcal{E} \cup 
	\mathcal{M} \longrightarrow \mathcal{E}\cup
	\mathcal{M}\,
	\label{eq:3.2}
	\end{equation}
in \eqref{eq:3.1}. Again $S$ denotes the stoichiometric matrix, and $R$ is the rate matrix. We recall that, by definition, we have
	\begin{equation}
	\alpha \leadsto \beta \quad
	 \Longleftrightarrow \quad
	 z_\beta^\alpha \neq 0\,,\ 
	 \text{algebraically}\,,
	\label{eq:3.3}
	\end{equation}
for any $\alpha \in \mathcal{E},\, \beta \in \mathcal{E} \cup \mathcal{M}$.

We digress briefly to consider metabolite ``perturbations'' $\alpha = m^* \in \mathcal{M}$, as well. 
\emph{Define} $z^{m^*}$ \emph{as the} \emph{(flux, metabolite)-response to an external perturbation of the metabolite} $m^*$.
In other words, we define
\begin{equation}
	m^* \leadsto \beta \quad
	 \Longleftrightarrow \quad
	 z_\beta^{m^*} \neq 0\,,\quad
	 \text{algebraically}\,.
	\label{eq:3.4}
	\end{equation}
From an applied point of view this means that we study the (infinitesimal) steady state response $\delta x^{m^*}$ of $x$, and the associated flux changes $\Phi^{m^*}$, to external feeds of $x_{m^*}$,
	\begin{equation}
	\dot{x} = S \mathbf{r} (x) +
	\varepsilon \,  e_{m^*}\,,
	\label{eq:3.5}
	\end{equation}
as the (infinitesimal) rate $\varepsilon$ of that feed varies.

After this metabolic digression we now present the case $E=|\mathcal{E}|=|\mathcal{M}|=M$ as our first example.
The matrices $S$ and $R$ are square, and
	\begin{equation}
	\det B=(-1)^E \det S \cdot
	\det R =(-1)^E \det (SR)\,.
	\label{eq:3.6}
	\end{equation}
Note $\det S \neq 0$, by our full rank assumption \eqref{eq:1.9}.

The steady state equation \eqref{eq:1.10} becomes $r(x) =0$, for $\det S \neq 0$.
If $r(x)>0$ for $ x>0$, componentwise, this precludes the existence of positive steady states $x>0$.
However, we did not impose any such positivity restrictions on $r(x)$.
Even in the monomolecular case, for example, a feed reaction $0 \rightarrow X_1$ in \eqref{eq:1.1} can be lumped with a subsequent forward reaction $X_1 \rightarrow X_2$ into a single reaction term like $r_1=r_1(x) = k_0 -k_1x_1$, at the expense of violating positivity $r_1 >0$.

The child selections $J$: $ \mathcal{M} \rightarrow\mathcal{E}$ of theorem~\ref{thm:2.1} readily appear in the evaluation of $\det R$. 
In fact,  \eqref{eq:2.4c}, \eqref{eq:2.4e} hold with
	\begin{equation}
	a_J = \text{sgn}\,J \cdot \det S\,,
	\label{eq:3.7}
	\end{equation}
if we view $J$ as a permutation of $|\mathcal{M}| = |\mathcal{E}|$ elements with signature $\text{sgn}\,J$.
Our nondegeneracy assumption $\det (SR) \neq 0$ of theorem \ref{thm:2.2} amounts to (algebraic) invertibility of $R$.

Theorem~\ref{thm:2.2} informs us that there are no influences $j^* \leadsto j' \in \mathcal{E}$ at all, since $j^* \not\in J(\mathcal{M}) = \mathcal{E}$ is impossible.
Of course this also follows directly, because $S \Phi^{j^*} =0$ in \eqref{eq:1.17} and $\det S \neq 0$ imply $\Phi^{j^*}=0$, for any $j^* \in \mathcal{E}$.
Thus the flux influence sets $I_\mathcal{E}(j^*)$  of \eqref{eq:2.10} are all empty.
The pure flux influence graph $F$ consists of the $E$ isolated vertices $j^* \in \mathcal{E}$, sadly without any edges.
The full flux influence graph $\mathcal{F}$, then, does not possess any edges either.
Therefore all metabolite influences $I_\mathcal{M}(j^*)$ are direct, i.e.
	\begin{equation}
	I_\mathcal{E} (j^*) = \{ {~} \}\,; \qquad
	I_\mathcal{M}(j^*) = \mathcal{M}^d(j^*)\,.
	\label{eq:3.8}
	\end{equation}
Theorem~\ref{thm:2.3} implies that $j^* \leadsto m'$, if and only if $J(m') = j^*$, for some child selection permutation $J$: $\mathcal{M} \rightarrow\mathcal{E}$.
Indeed $\ker S= \{0\} $.
Hence
	\begin{equation}
	\mathcal{M}^d(j^*) = 
	\{ J^{-1}(j^*)\, |
	\, J\; \text{is a child selection}\}\,.
	\label{eq:3.9}
	\end{equation}
Clearly the metabolite influence sets $\mathcal{M}^d(j^*)$ will not be disjoint, here, if the reaction network admits more than one child selection permutation $J$.

We now revisit the adhoc bimolecular example of \cite{fiedler15example} with 15~reactions $\mathcal{E} = \{ 1,\dots ,15\}$, 10~metabolites $\mathcal{M} = \{ \texttt{A}, \dots , \texttt{J}\}$, and a single bimolecular reaction
	\begin{equation}
	j = 12: \quad \texttt{G}+\texttt{H} \longrightarrow I\,,
	\label{eq:3.10}
	\end{equation}
see fig.~\ref{fig:3.1}(a).
In spite of this notation inherited from \cite{fiedler15example}, we do not plan to confuse metabolites \texttt{B,F,J} with the matrix $B$, the pure flux influence graph $F$, or child selections $J$.

%%%%%%%%%%%%%%%FIGURE 3.1
\begin{figure}[]
\centering \includegraphics[width=\textwidth]{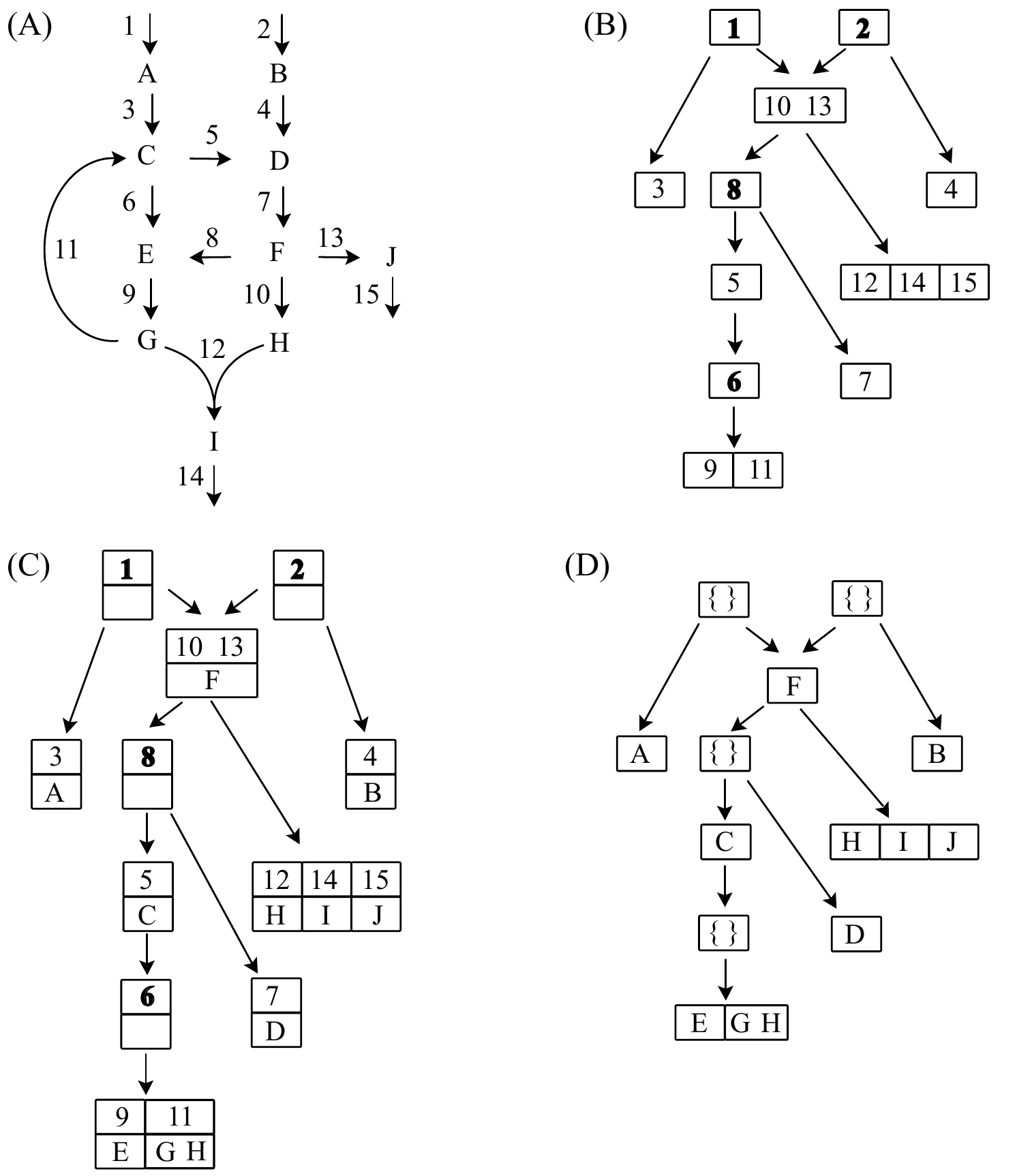}
\caption{\emph{
(A) Hypothetical network with 15 reactions, $\mathcal{E} = \{1,\ldots,15\}$ and 10 metabolites $\mathcal{M} = \{\texttt{A}, \ldots, \texttt{J}\}$. The only bimolecular reaction is $j=12$; see \cite{fiedler15example}.
(B) The pure flux influence graph $F$.
Equivalence classes $\varphi^*$ of mutual influence are denoted by single boxes. The only box with more than one element is $\varphi^* = \langle 10,13 \rangle$.
Self influences $1, 2, 6, 8$ are marked in boldface.
Arrows to arrays with several columns of boxes are pointing to each column, separately.
(C) The full flux influence graph $\mathcal{F}$ with direct metabolite influence set $\mathcal{M}^d(\varphi^*)$ attached below each vertex box $\varphi^*$ of $F$.
Omitting the flux classes $\varphi^*$ from $\mathcal{F}$, we obtain the graph $(D)$ of all possible metabolite influence sets $I_{\mathcal{M}}(\varphi^*)$.
These are given by the union along all directed paths emanating from any given vertex $\mathcal{M}^d (\varphi^*)$. The starting vertex is included and may be an empty set vertex $\{ {~}\}$.
}}
\label{fig:3.1}
\end{figure}

%%%%%%%%%%%%%%%Table 3.1
\begin{figure}[]
\centering \includegraphics[width=\textwidth]{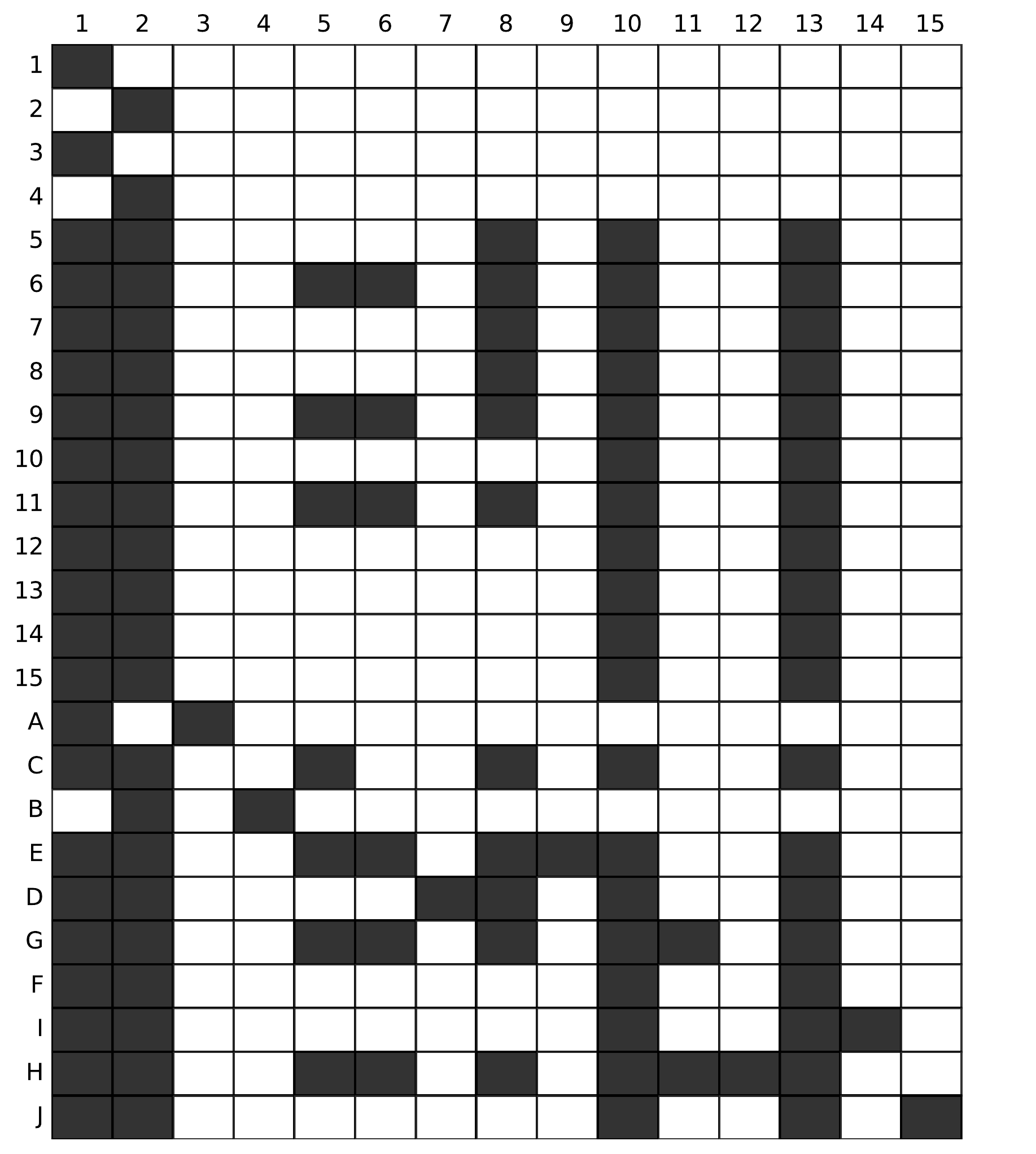}
\caption{\emph{
Algebraically nonzero entries (black) of the sensitivity matrix $B_{\beta \alpha}^{-1}$, of example network fig.~\ref{fig:3.1}(a).
A black area in row $\beta \in \mathcal{E} \cup \mathcal{M}$ and column $\alpha \in \mathcal{E}$ indicates nonzero influence $\alpha \leadsto \beta$ of the rate of reaction $\alpha$ on the flux of reaction $\beta$, for $\beta \in \mathcal{E}$, and on the steady state concentration of metabolite $\beta$, for $\beta \in \mathcal{M}$.
}}
\label{tbl:3.1}
\end{figure}

The $25 \times 25$ matrix $B$ of \eqref{eq:3.1}, \eqref{eq:3.2} can easily be inverted symbolically.
In our example,
	\begin{equation}
	\det B =
	- r_3\, r_4\, r_5\, r_7\, r_9 \,
	r_{11}\, r_{12H}\,r_{14}\,r_{15}\,(2r_{10} +
	r_{13})\,.
	\label{eq:3.12}
	\end{equation}
We have omitted the redundant input metabolite index $m$ in $r_{jm}$, for monomolecular reactions $j$.
In \eqref{eq:3.12} we see explicitly what it means for  $\det B$ to be nonzero, algebraically.
We have to require nonzero prefactors $r_3 ,\dots ,r_{15}$ and the linear nondegeneracy $2r_{10} + r_{13} \neq 0$.
In this explicit sense, the bimolecular chemical reaction network of fig.~\ref{fig:3.1}(a) is regular, algebraically; see also theorem~\ref{thm:2.1}.

The feed reactions are $\{1,2\}$ and the exit reactions are $\{14,15\}$.
We have omitted the formal metabolite entry $0$ in these open system reactions.
The seven single children $j$, as defined just after theorem \ref{thm:2.2}, are
	\begin{equation}
	j \in \{ 3,4,7,9,12,14,15\}\,.
	\label{eq:3.11}
	\end{equation}
By theorem~\ref{thm:2.2} they have no flux influence, and therefore define terminal sinks of the pure flux influence graph $F$; see fig.~\ref{fig:3.1}(b). The remaining terminal sink $j = 11$ is not a single child.

Let us study \eqref{eq:3.12} in a little more detail, in terms of child selections $J$ and \eqref{eq:2.4b}--\eqref{eq:2.4f}.
The seven single children $j$ in \eqref{eq:3.11} are forced to appear in the index set of any monomial in \eqref{eq:3.12}.
Indeed, their unique respective single-child mothers $m \vdash j$ have no choice but to select their only child $j = J(m)$; see \eqref{eq:2.3} and \eqref{eq:2.4f}.
Since the bimolecular reaction $j =12$ is the single child of metabolite \texttt{H}, this also forces the index $j=J(\texttt{G}) =11$ of $r_{11}$ to appear in the prefactor of \eqref{eq:3.12}.
The only remaining choices for a child selection $J$ are
	\begin{equation}
	J(\texttt{C}) \in \{ 5,6\} \quad \text{and} \quad 
	J(\texttt{F}) \in \{8,10,13\}\,.
	\label{eq:3.13}
	\end{equation}
The choice $J(\texttt{C}) = 6$ immediately leads to the indicator $1_{\{6,9,11\}}$ of the network cycle $\texttt{C} \overset{6}\rightarrow \texttt{E} \overset{9}\rightarrow \texttt{G}\overset{11}\rightarrow \texttt{C}$ as a kernel vector of $S$ supported on $J(\mathcal{M})$.
Indeed $J(\texttt{E}) =9$ is a single child, and we just saw why $J(\texttt{G}) =11$ is also forced to hold.
Therefore we must choose $J(\texttt{C}) =5$ in theorem \ref{thm:2.1}.
This explains why $r_5$ must also appear in the prefactor of \eqref{eq:3.12}.

We could easily proceed with such elementary arguments to fully derive \eqref{eq:3.12} by hand.
Likewise, the modified child selections of theorems~\ref{thm:2.2} and~\ref{thm:2.3} would determine all flux and metabolite influences of rate perturbations $j^*$, explicitly.
Instead, we present a symbolic version of $B^{-1}$ in figure~\ref{tbl:3.1}.
A black area entry for $B_{\beta \alpha}^{-1}$ in row $\beta \in \mathcal{E} \cup \mathcal{M}$ and column $\alpha  \in \mathcal{E}$ of $B^{-1}$ is equivalent to an algebraically nonzero component $z_\beta^\alpha$ of the response, i.e. to the influence $\alpha \leadsto \beta$; see \eqref{eq:3.1}, \eqref{eq:3.3}.
By the construction \eqref{eq:2.26}--\eqref{eq:2.27b}, \eqref{eq:2.29} this immediately defines the flux influence di-graphs $F, \mathcal{F}$ of fig.~\ref{fig:3.1}(b),(c).
Theorem~\ref{thm:2.7} explains how these graphs determine all flux influence sets $I_{\mathcal{E}}(j^*)$ and metabolite influence sets $I_{\mathcal{M}}(j^*)$, explicitly, by following directed paths downward in the diagrams.
Note the mother-child pairs $m^* \vdash j^*$ which provide terminal sink vertices $j^* \{ m^*\}$ in $\mathcal{F}$, fig.~\ref{fig:3.1}(c).
The bimolecular input $\{\texttt{G,H}\}$ of reaction $12$, interestingly, cannot respond to a perturbation of $j^* =12$, because $j^* =12$ is the single child of $m^* =\texttt{H}$.
In fact $\{\texttt{G,H}\}$  arises in response to the terminal sink $j^* =11$ of the di-graph $F$.
Indeed $j^* =11$ cannot influence itself, or any other flux, by theorem~\ref{thm:2.2}, because we already  saw that $j^* = 11= J(\texttt{G})$ must appear in any child selection.
Therefore $j^* \leadsto \texttt{G}$, in order not to change its own flux.
That metabolite change induces $j^* \leadsto \texttt{H}$ because the bimolecular flux of $j'=12$ is not influenced by $j^* =11$.
This explains the ``bimolecular'' sink vertex $11\{ \texttt{G,H}\}$ and the ``monomolecular'' sink vertex $12\{ \texttt{H}\}$ in the flux influence graph $\mathcal{F}$.

We have detailed in theorem~\ref{thm:2.7} how any metabolite influence set $I_{\mathcal{M}}(j^*) = I_{\mathcal{M}}(\varphi^*)$ arises as the union of the direct influence sets $\mathcal{M}^d(\varphi')$ along all downward directed paths from $\varphi^* \mathcal{M}^d(\varphi^*)$, including that starting vertex itself; see \eqref{eq:2.28a}, \eqref{eq:2.28b}.
To obtain the metabolite influence sets we can therefore simply omit the part $\varphi^*$ of the vertex labels in $\mathcal{F}$ and take unions of the remaining metabolite annotations $\mathcal{M}^d(\varphi')$ along downward directed paths; see fig.~\ref{fig:3.1}(d).
Note that some empty label vertices $\{ {~} \}$ arise, which cannot be omitted.
For example, the omission of $\langle 8 \rangle \{ {~} \}$ would claim that $\{\texttt{C,D,E,G,H}\} = I_{\mathcal{M}}(8)$ is not a metabolite influence set of any single flux perturbation.
Omission of $\langle 6 \rangle \{ {~} \}$, likewise, would claim that $\{\texttt{E,G,H}\} = I_{\mathcal{M}}(6)$ does not occur.

Our simple examples indicate a wealth of sensitivity information which is extracted from structural assumptions, alone.
Transitivity theorem~\ref{thm:2.4} allows us to display all flux influences in a single diagram.
And the sparse, and highly structured, inverse $B^{-1}$ of the sparse network/stoichiometry matrix $B$ testifies against conventional ``knowledge'' that inverses of sparse matrices are ``not sparse''.

%%%%%%%%%%%%%%%%%%%%%%%%%%%%%%%%%%%%%%%%%%%%%%%%%%%%%%%%%%%

\section{Proofs of theorems 2.1--2.3}
\label{sec4:proofs}

Our starting point is the matrix $B$ of section~\ref{sec3:theoex}; see \eqref{eq:3.1}--\eqref{eq:3.3}.
It is easy to block-diagonalize and to invert $B$ explicitly:
	\begin{equation}
	\begin{pmatrix}
	\text{id}_\mathcal{E} & 0\\
	S & \text{id}_\mathcal{M}
	\end{pmatrix}
	\begin{pmatrix}
	-\text{id}_\mathcal{E} & R\\
	S & 0
	\end{pmatrix}
	\begin{pmatrix}
	\text{id}_\mathcal{E} & R\\
	0 & \text{id}_\mathcal{M}
	\end{pmatrix}
	=
	\begin{pmatrix}
	-\text{id}_\mathcal{E} & 0\\
	0 & SR
	\end{pmatrix}\,.
	\label{eq:4.1}
	\end{equation}
With $E$:= $|\mathcal{E}|$ counting all reactions, this implies	
	\begin{equation}
	\det B = (-1)^E \det (SR)\,.
	\label{eq:4.2}
	\end{equation}
Hence $B$ is invertible, if and only if $SR$ is, with
	\begin{equation}
	B^{-1} =	
	\begin{pmatrix}
	-\text{id}_\mathcal{E} + R(SR)^{-1}S \quad&
	R(SR)^{-1}\\
	(SR)^{-1} S \quad& 
	(SR)^{-1}
	\end{pmatrix}\,.
	\label{eq:4.3}
	\end{equation}
Our proofs of theorems~\ref{thm:2.1}--\ref{thm:2.3} are all based on the Cauchy-Binet formula \cite{gantmacher} for determinants like $\det (SR)$ in \eqref{eq:4.2}.

Throughout this section, and for any matrix $B$, let
	\begin{equation}
	B_\varrho^\sigma
	\label{eq:4.4}
	\end{equation}
denote the submatrix of $B$ which consists of rows in the index set $\varrho$ and columns in the index set $\sigma$, only.
We frequently omit braces $\{ j\}$ for single element sets; for example $S_m$ and $S^j$ denote row $m$ and column $j$ of the stoichiometric matrix $S$, respectively.

\begin{proof}[{\textbf{Proof of theorem~2.1}}]
%\hfill \\
By the Cauchy-Binet formula, \cite{gantmacher}, and \eqref{eq:4.2},
	\begin{equation}
	\det (SR) =
	\sum_{\mathcal{E}' \in \mathcal{E}_M}
	\det S^{\mathcal{E}'} \cdot 
	\det R_{\mathcal{E}'}\,.
	\label{eq:4.5}
	\end{equation}
The sum runs over the set $\mathcal{E}_M$ of all $\mathcal{E}' \subseteq \mathcal{E}$ with $|\mathcal{E}'| = M = | \mathcal{M}|$ elements.
More explicitly, with notation \eqref{eq:2.4d},
	\begin{equation}
	\det R_{\mathcal{E}'} =
	\sum_{J: 
	\mathcal{M} \rightarrow \mathcal{E}'}
	\text{sgn}\,J \cdot \textbf{r}^J
	\label{eq:4.6}
	\end{equation}
where $\text{sgn}\,J$ denotes the signature, or parity, of the bijection $J$: $\mathcal{M} \rightarrow \mathcal{E}'$.
The definition of $\text{sgn}\,J$ refers to our arbitrary, but fixed, labeling of metabolites $\mathcal{M} = \{1,\ldots , M\}$ and reactions $\mathcal{E} = \{1,\ldots , E\}$. The integer labels fix natural orderings on $\mathcal{M}$ and on $\mathcal{E}' := J(\mathcal{M}) \subseteq \mathcal{E}$, respectively. The orders define a natural identification of $\mathcal{M}$ with $\mathcal{E'}$, which allows us to view $J$ as a permutation of $\mathcal{M}$. By $\text{sgn}\,J = \pm 1$ we denote the signature of that permutation.

The monomial $\textbf{r}^J$ of \eqref{eq:2.4d} is nontrivial, if and only if $J$ is a child selection; see \eqref{eq:1.5}, \eqref{eq:1.6} and \eqref{eq:2.2}, \eqref{eq:2.3}.
This proves \eqref{eq:2.4c}, \eqref{eq:2.4e} with
	\begin{equation}
	a_J = \text{sgn}\,J \cdot \det S^{J(\mathcal{M})}\,,
	\label{eq:4.7}
	\end{equation}
and theorem~\ref{thm:2.1}.
\end{proof}

Henceforth we assume $\det SR \neq 0$, alias $\det B \neq 0$; see \eqref{eq:4.2}. Let $m_1, m_2 \in \mathcal{M}$.
By Cramer's rule and Cauchy-Binet we obtain
	\begin{equation}
	\begin{aligned}
	\det (SR) (SR)_{m_1m_2}^{-1} &=
	(-1)^{m_1+m_2} \det 
	(SR)_{\mathcal{M} \smallsetminus m_2}^{\mathcal{M}\smallsetminus m_1} =\\
	&= (-1)^{m_1+m_2} \det (S_{\mathcal{M} \smallsetminus m_2} R^{\mathcal{M} \smallsetminus m_1}) =\\
	&= (-1)^{m_1+m_2} \sum_{\mathcal{E}' \in \mathcal{E}_{M-1}} \det S _{\mathcal{M}\smallsetminus m_2}^{\mathcal{E}'}
	\cdot \det R_{\mathcal{E}'}^{\mathcal{M} \smallsetminus m_1}\,.
	\end{aligned}
	\label{eq:4.8}
	\end{equation}
By \eqref{eq:3.1}--\eqref{eq:3.3} we already know how $\alpha \leadsto \beta$ is equivalent to
	\begin{equation}
	0 \neq z_\beta^\alpha =
	-e_\beta^T B^{-1} e_\alpha =
	-B_{\beta \alpha}^{-1}\,,
	\label{eq:4.9}
	\end{equation}
algebraically.
By \eqref{eq:4.3}, \eqref{eq:4.5}, \eqref{eq:4.8} we already know how to evaluate such terms.

\begin{proof}[{\textbf{Proof of theorem~2.2(i)}}]
%\hfill \\
We have to consider the special case $\alpha = \beta = j^* \in \mathcal{E}$.
Without loss of generality, we may relabel reactions such that $j^* = 1$. 
Starting with \nopagebreak \eqref{eq:4.2},\eqref{eq:4.9}, we obtain (not quite immediately)
\pagebreak
	\begin{equation}
	\begin{aligned}
	&(-1)^E z_{j^*}^{j^*} \det B =\\
	&\quad =
	-\det (SR) B_{j^*j^*}^{-1}=
	\det (SR) - \big( R \det 
	(SR) (SR)^{-1}S\big)_{j^*j^*} =\\
	&\quad =
	\det (SR) - \sum_{m_1, m_2 \in \mathcal{M}}
	R_{j^*}^{m_1} \det (SR)(SR)_{m_1m_2}^{-1}
	S_{m_2}^{j^*} =\\
	&\quad =
	\det (SR)\, - \\
	&\quad -\sum_{m_1,m_2} 
	\sum_{\mathcal{E}' \in \mathcal{E}_{M-1}}
	(-1)^{m_2+j^*} S_{m_2}^{j^*} \det
	S_{\mathcal{M} \smallsetminus
	m_2}^{\mathcal{E}'} 
	\cdot (-1)^{m_1+j^*}
	R_{j^*}^{m_1} \det 
	R_{\mathcal{E}'}^{\mathcal{M} \smallsetminus
	m_1} =\\
	&\quad =
	\det	 (SR) - \sum_{\mathcal{E}'\in
	\mathcal{E}_{M-1}} \det \left(S^{j^*},
	S^{\mathcal{E}'}\right) \cdot \det
	\begin{pmatrix}
	R_{j^*} \\ R_{\mathcal{E}'}
	\end{pmatrix}
	=\\
	&\quad = \sum_{\mathcal{E}'' \in \mathcal{E}_M} 
	\det S^{\mathcal{E}''} \cdot \det R_{\mathcal{E}''} - \sum_{j^*
	\not\in \mathcal{E}' \in \mathcal{E}_{M-1}}
	\det \left(S^{j^*},
	S^{\mathcal{E'}}\right) \cdot \det
	\begin{pmatrix}
	R_{j^*} \\ R_{\mathcal{E}'}
	\end{pmatrix}
	=\\
	&\quad = \sum_{j^* \not\in \mathcal{E}'' \in
	\mathcal{E}_M} 
	\det S^{\mathcal{E}''} \cdot \det R_{\mathcal{E}''}\,.
	\end{aligned}
	\label{eq:4.10}
	\end{equation}
We have used expansions of determinants with respect to a prepended column $S^{j^*}$ or row $R_{j^*}$.
We also omitted the cases $j^* \in \mathcal{E}'$ of duplicate rows and columns.

The proof of theorem~\ref{thm:2.2}(i) then concludes analogously to \eqref{eq:4.6}, \eqref{eq:4.7}, and shows
	\begin{equation}
	z_{j^*}^{j^*} \det (SR) =
	\sum_{J: \mathcal{M} \rightarrow
	\mathcal{E} \smallsetminus j^*}
	a_J\, \mathbf{r}^J\,,
	\label{eq:4.11}
	\end{equation}	
where $J$ are child selections and
	\begin{equation}
	a_J= \text{sgn}\,J \cdot 
	\det S^{J(\mathcal{M})}\,.
	\label{eq:4.12}
	\end{equation}	 
\end{proof}

\begin{proof}[{\textbf{Proof of theorem~2.2(ii)}}]
%\hfill \\
This time we have to consider $\alpha = j^* =1$, $\beta = j' =2$, without loss of generality.
We proceed along the lines of \eqref{eq:4.10}--\eqref{eq:4.12} to prove
	\begin{equation}
	\begin{aligned}
	&(-1)^{E-1} z_{j'}^{j^*} \det B =\\
	&\quad =
	\det (SR) B_{j' j^*}^{-1}
	=\sum_{m_1,m_2 \in \mathcal{M}} R_{j'}^{m_1}
	\det (SR) (SR)_{m_1m_2}^{-1} S_{m_2}^{j^*}=\\
	&\quad =\sum_{m_1,m_2} \sum_{\mathcal{E}'\in
	\mathcal{E}_{M-1}} (-1)^{m_2+j^*}
	S_{m_2}^{j^*}\det 
	S_{\mathcal{M}\smallsetminus m_2}^
	{\mathcal{E}'}
	 \cdot(-1)^{m_1 +j^*}
	R_{j'}^{m_1} \det
	R_{\mathcal{E}'}^{\mathcal{M}
	\smallsetminus m_1}=\\
%	\end{aligned}\nonumber
%	\end{equation}
%
%	\begin{equation}
%	\begin{aligned}	
	&\quad =\sum_{\mathcal{E}' \in\mathcal{E}_{M-1}}
	\det \left( S^{j^*}, S^{\mathcal{E}'}
	\right) \cdot \det
	\begin{pmatrix}
	R_{j'} \\ R_{\mathcal{E}'}
	\end{pmatrix}
	=\\
	&\quad =\sum_{j^*,\, j' \not\in \mathcal{E}' \in
	\mathcal{E}_{M-1}} \det\left( S^{j^*},
	S^{\mathcal{E}'}
	\right) \cdot \det
	\begin{pmatrix}
	R_{j'} \\ R_{\mathcal{E}'}
	\end{pmatrix}
	=\\
	&\quad = \sum_{j^*,\, j' \not\in \mathcal{E}' \in
	\mathcal{E}_{M-1}} \det S^{\mathcal{E}' \cup j^*} \cdot \det  R_{\mathcal{E}' \cup j'}\,.
	\end{aligned}
	\label{eq:4.13}
	\end{equation}
Again we have expanded determinants with respect to a prepended column $S^{j^*}$ or row $R_{j'}$, and we have safely omitted the zero determinants caused by duplicate columns $S^{j^*}$ or rows $R_{j'}$.
This shows
	\begin{equation}
	z_{j'}^{j^*} \det (SR) =
	\sum_{j^* \not\in J(\mathcal{M}) \ni j'}
	a_J \, \mathbf{r}^J\,.
	\label{eq:4.14}
	\end{equation}
Here $J$ are child selections, and
	\begin{equation}
	a_J = -\;\text{sgn}J \cdot\det S^{J(\mathcal{M})_{\text{sw}}}
	\label{eq:4.15}
	\end{equation}
with the swapped columns
	\begin{equation}
	J(\mathcal{M})_{\text{sw}}
	\ := \ j^* \cup
	J(\mathcal{M}) \smallsetminus j'\,.
	\label{eq:4.16}
	\end{equation}
This completes the proof of theorem~\ref{thm:2.2}.
\end{proof}

\begin{proof}[{\textbf{Proof of theorem~2.3}}]
%\hfill \\
Quite similarly to the previous cases we only have to consider $\alpha = j^* = 1$.
For $\beta = E+m' = E +1$ we are also allowed to pick the first element $m'=1$
of $\mathcal{M}$.
We proceed as usual:
	\begin{equation}
	\begin{aligned}
	(-1)^{E-1} z_{E+m'}^{j^*} \det B &=
	\det (SR) B_{E+m' ,\, j^*}^{-1}
	= \sum_{m \in \mathcal{M}} \det
	(SR)(SR)_{m'm}^{-1} S_m^{j^*} = \\
	&= \sum_{m \in \mathcal{M}}
	\sum_{\mathcal{E}' \in \mathcal{E}_{M-1}}
	(-1)^{m+j^*} S_m^{j^*} \det
	S_{\mathcal{M} \smallsetminus
	m}^{\mathcal{E}'} \cdot \det
	R_{\mathcal{E}'}^{\mathcal{M}\smallsetminus
	 m'} =\\
	&= \sum_{\mathcal{E}'\in \mathcal{E}_{M-1}}
	\det \left( S^{j^*}, S^{\mathcal{E}'}
	\right) \cdot \det R_{\mathcal{E}'}
	^{\mathcal{M}\smallsetminus m'}=\\
	&=\sum_{j^* \not\in \mathcal{E}' \in
	\mathcal{E}_{M-1}}
	\det S^{\mathcal{E}' \cup j^*} \cdot \det R_{\mathcal{E}'}^
	{\mathcal{M} \smallsetminus m'}\,.
	\end{aligned}
	\label{eq:4.17}
	\end{equation}
Note how we have substituted $j^*$ for $m'=1=j^*$ in \eqref{eq:4.17}. This shows
	\begin{equation}
	z_{E+m'}^{j^*} \det (SR) =
	\sum_{J^\vee} a_{J^\vee}\,
	\mathbf{r}^{J^\vee}
	\label{eq:4.18}
	\end{equation}
where $J^\vee$: $\mathcal{M} \smallsetminus m' \longrightarrow \mathcal{E} \smallsetminus j^*$ is a partial child selection.
We extend $J^\vee$ to a bijection
	\begin{equation}
	J: \quad \mathcal{M} \ \longrightarrow \  J(\mathcal{M}):=
	 J^\vee (\mathcal{M} \smallsetminus m') 
	 \cup j^*\,,	
	 \label{eq:4.19}
	\end{equation}
defining $J(m')$:= $j^*$.
Here $J$ need not be a child selection.
We obtain a nonzero coefficient
	\begin{equation}
	\begin{aligned}
	a_J &= -\;\text{sgn}\,J^\vee 
	\cdot\det 
	S^{J^\vee(\mathcal{M}\smallsetminus m')
	\cup j^*} =\\
	&= -\;\text{sgn}\,J \cdot \det
	S^{J(\mathcal{M})}\,.
	\end{aligned}
	\label{eq:4.20}
	\end{equation}
This completes the proof of theorem~\ref{thm:2.3}.
\end{proof}

%%%%%%%%%%%%%%%%%%%%%%%%%%%%%%%%%%%%%%%%%%%%%%%%%%%%%%%%%%%

\section{Augmenticity}
\label{secB4:aug}

As a corollary of theorems~\ref{thm:2.1}--\ref{thm:2.3} we study how the influence relation $j^* \leadsto \alpha \in \mathcal{E} \cup \mathcal{M}$ is affected when we enlarge the network.
In theorem~\ref{thm:B4.1} below we observe how existing influences $j^* \leadsto \alpha$ within the smaller network $(\mathcal{E}_0, \mathcal{M}_0)$ persist in the larger, augmented network $ (\mathcal{E}_1, \mathcal{M}_1) \supseteq (\mathcal{E}_0, \mathcal{M}_0)$, possibly enriched by new, additional influences.
To distinguish this ``monotonicity'' feature from other, more mundane and elementary features involving monotone reaction rates or comparison type theorems in a single network, we use the term \emph{augmenticity} for such ``monotonicity'' under augmentation of networks.

To be more precise we call a network $(\mathcal{E}_1, \mathcal{M}_1)$ an \emph{augmentation} of a network $(\mathcal{E}_0, \mathcal{M}_0)$, in symbols
	\begin{equation}
	(\mathcal{E}_1, \mathcal{M}_1) \supseteq  (\mathcal{E}_0, \mathcal{M}_0)\,, 
	\label{eq:B4.0}
	\end{equation}
if $\mathcal{E}_1 \supseteq  \mathcal{E}_0,\ \mathcal{M}_1 \supseteq  \mathcal{M}_0$, and the stoichiometric vectors $y^j, \bar{y}^j$ of the networks coincide for all $j \in \mathcal{E}_0$; see \eqref{eq:1.1}.
As always we have identified $y^j, \bar{y}^j \in \mathcal{M}_0 \subseteq \mathcal{M}_1$ by zero padding; see notation \eqref{eq:1.2}.
In particular, the associated stoichiometric matrices $S_0, S_1$ satisfy
	\begin{equation}
	S_0 = S^{\mathcal{E}_0}_{1,\, \mathcal{M}_0} 
	\quad \text{and}\quad 
	0= S_{1, \, \mathcal{M}_1 \smallsetminus \mathcal{M}_0}^{\mathcal{E}_0}\,.
	\label{eq:B4.1}
	\end{equation}
We also call $(\mathcal{E}_0, \mathcal{M}_0)$ a \emph{subnetwork} of $(\mathcal{E}_1, \mathcal{M}_1)$.

Admittedly, new reactions or metabolites may drastically affect the numerical values (and even the very existence and multiplicity) of existing steady states.
Our viewpoint of qualitative sensitivity, however, is only concerned with the collection of algebraically nontrivial response patterns, as derived from the stoichiometric vectors $y^j, \bar{y}^j$.
Therefore the following augmenticity theorem is surprisingly simple.

\begin{thm}
Assume the network $(\mathcal{E}_0, \mathcal{M}_0)$ is regular, algebraically, as in theorem~\ref{thm:2.1}.
Let the network $(\mathcal{E}_1, \mathcal{M}_1)$ be an augmentation of the subnetwork $(\mathcal{E}_0, \mathcal{M}_0) \subseteq  (\mathcal{E}_1, \mathcal{M}_1)$.
Assume there exists a partial child selection
	\begin{equation}
	J^\vee: \quad
	\mathcal{M}_1 \smallsetminus \mathcal{M}_0 \ \longrightarrow \ 
	\mathcal{E}_1 \smallsetminus \mathcal{E}_0
	\label{eq:B4.2}
	\end{equation}
such that the associated restriction of the stoichiometric matrix $S_1$ of the augmented network $(\mathcal{E}_1, \mathcal{M}_1)$ is nonsingular:
	\begin{equation}
	\det S_{{\mathcal{M}_1} \smallsetminus {\mathcal{M}_0}}
	^{J^{\vee} (\mathcal{M}_1 \smallsetminus \mathcal{M}_0)}
	\quad \neq \quad 0\,.
	\label{eq:B4.3}
	\end{equation}
Then the augmented network $(\mathcal{E}_1, \mathcal{M}_1)$ is also regular, algebraically.
Moreover, any influence
	\begin{equation}
	\mathcal{E}_0 \ni j^* \,\, \leadsto \,\, 
	\alpha \in \mathcal{E}_0 \cup \mathcal{M}_0
	\label{eq:B4.4}
	\end{equation}
within the subnetwork $(\mathcal{E}_0, \mathcal{M}_0)$ remains an influence in the augmented network $(\mathcal{E}_1, \mathcal{M}_1)$.
\label{thm:B4.1}
\end{thm}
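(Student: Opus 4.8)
The plan is to reduce every assertion to Theorems~\ref{thm:2.1}--\ref{thm:2.3} for the augmented network, by exhibiting the required (partial) child selections explicitly as ``block sums'' of the given subnetwork selection with the new selection $J^\vee$. The whole argument rests on one structural observation. Ordering the metabolites as $(\mathcal{M}_0,\, \mathcal{M}_1 \smallsetminus \mathcal{M}_0)$ and the reactions as $(\mathcal{E}_0,\, \mathcal{E}_1 \smallsetminus \mathcal{E}_0)$, the defining relations \eqref{eq:B4.1} state precisely that the $\mathcal{E}_0$-columns of $S_1$ vanish on the new metabolite rows $\mathcal{M}_1 \smallsetminus \mathcal{M}_0$. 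Hence $S_1$ is block upper triangular,
	\begin{equation}
	S_1 = \begin{pmatrix} S_0 & * \\ 0 & \hat{S} \end{pmatrix}\,, \qquad
	\hat{S} := S_{1,\, \mathcal{M}_1 \smallsetminus \mathcal{M}_0}^{\mathcal{E}_1 \smallsetminus \mathcal{E}_0}\,,
	\notag
	\end{equation}
and $\det \hat{S}^{\,J^\vee(\mathcal{M}_1 \smallsetminus \mathcal{M}_0)}$ is exactly the nonzero minor \eqref{eq:B4.3}.

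First I would prove algebraic regularity of $(\mathcal{E}_1, \mathcal{M}_1)$. By Theorem~\ref{thm:2.1}, regularity of the subnetwork provides a child selection $J_0 \colon \mathcal{M}_0 \to \mathcal{E}_0$ with $\det S_0^{J_0(\mathcal{M}_0)} \neq 0$. I define $J_1 \colon \mathcal{M}_1 \to \mathcal{E}_1$ by $J_1 := J_0$ on $\mathcal{M}_0$ and $J_1 := J^\vee$ on $\mathcal{M}_1 \smallsetminus \mathcal{M}_0$. Injectivity holds because the ranges $J_0(\mathcal{M}_0) \subseteq \mathcal{E}_0$ and $J^\vee(\mathcal{M}_1 \smallsetminus \mathcal{M}_0) \subseteq \mathcal{E}_1 \smallsetminus \mathcal{E}_0$ are disjoint; the input property $m \vdash J_1(m)$ holds on $\mathcal{M}_0$ since the stoichiometry is unchanged on $\mathcal{E}_0$, and on $\mathcal{M}_1 \smallsetminus \mathcal{M}_0$ by the hypothesis on $J^\vee$. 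Since the column set $J_1(\mathcal{M}_1)$ splits along the $\mathcal{E}_0 / (\mathcal{E}_1 \smallsetminus \mathcal{E}_0)$ partition, the corresponding $M_1 \times M_1$ minor inherits the block triangular shape above, so that
	\begin{equation}
	\det S_1^{J_1(\mathcal{M}_1)}
	= \det S_0^{J_0(\mathcal{M}_0)} \cdot \det \hat{S}^{\,J^\vee(\mathcal{M}_1 \smallsetminus \mathcal{M}_0)}\,.
	\notag
	\end{equation}
Both factors are nonzero, the first by subnetwork regularity and the second by \eqref{eq:B4.3}, whence $\det S_1^{J_1(\mathcal{M}_1)} \neq 0$ and Theorem~\ref{thm:2.1} applies to the augmented network.

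Next I would establish persistence of influence \eqref{eq:B4.4}, treating the three cases of Theorems~\ref{thm:2.2}(i), \ref{thm:2.2}(ii) and \ref{thm:2.3} by the identical recipe. In each case the relevant characterization provides a (partial) child selection on the subnetwork whose associated column set --- namely $J_0(\mathcal{M}_0)$, the swapped set $\{j^*\} \cup J_0(\mathcal{M}_0) \smallsetminus \{j'\}$, or the augmented set $\{j^*\} \cup J_0^\vee(\mathcal{M}_0 \smallsetminus \{m'\})$, respectively --- selects an $S_0$-basis. I extend that selection by $J^\vee$ on the new metabolites $\mathcal{M}_1 \smallsetminus \mathcal{M}_0$, exactly as above. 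The exclusion condition $j^* \notin (\cdots)$ is preserved automatically, since $j^* \in \mathcal{E}_0$ while $J^\vee$ takes values only in $\mathcal{E}_1 \smallsetminus \mathcal{E}_0$; the membership $j' \in J_1(\mathcal{M}_1)$ in the Theorem~\ref{thm:2.2}(ii) case is inherited from the subnetwork, as $j' \in \mathcal{E}_0$. The extended column set again splits along the $\mathcal{E}_0 / (\mathcal{E}_1 \smallsetminus \mathcal{E}_0)$ partition, so the same block triangular factorization expresses its $S_1$-minor as the subnetwork minor times $\det \hat{S}^{\,J^\vee(\mathcal{M}_1 \smallsetminus \mathcal{M}_0)}$; nonvanishing of both factors yields precisely the $S_1$-basis condition required by the corresponding theorem, and the influence persists.

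I expect no serious obstacle: the content lies entirely in recognizing the block triangular form forced by \eqref{eq:B4.1}, and in the bookkeeping that the swapped and augmented column sets of Theorems~\ref{thm:2.2}--\ref{thm:2.3} never disturb the new block $\hat{S}$, because every swap involves only old reactions of $\mathcal{E}_0$ together with the forcing reaction $j^* \in \mathcal{E}_0$. The single point warranting care is verifying, in each case, that extending by $J^\vee$ indeed produces a legitimate (partial) child selection on $\mathcal{M}_1$ with the prescribed range restriction; this is a direct check of injectivity and of the input relation $m \vdash J_1(m)$, rendered routine by the disjointness of the two reaction ranges.
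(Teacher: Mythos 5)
Your proposal is correct and follows essentially the same route as the paper: extend the subnetwork's (partial) child selections by $J^\vee$ on the new metabolites, and exploit the block triangular structure of $S_1$ forced by \eqref{eq:B4.1} to factor the relevant minors into the subnetwork minor times the nonzero block \eqref{eq:B4.3}. Your explicit verification that the exclusion and membership conditions of theorems~\ref{thm:2.2} and~\ref{thm:2.3} survive the extension is a detail the paper leaves implicit, but it is the same argument.
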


\begin{proof}[{\textbf{Proof.}}]
To prove algebraic regularity of the augmented network, we will invoke theorem~\ref{thm:2.1}. Since the subnetwork $(\mathcal{E}_0, \mathcal{M}_0)$ is assumed to be algebraically regular, there exists a child selection $J_0$: $\mathcal{M}_0 \rightarrow \mathcal{E}_0$ such that
	\begin{equation}
	\det S_0^{J_0 (\mathcal{M}_0)}\quad \neq \quad  0\,.
	\label{eq:B4.5}
	\end{equation}
Let us extend $J_0$ by the partial child selection $J^\vee$ to a map
	\begin{equation}
	J_1(m):=
	\left\lbrace 
	\begin{aligned}
	&J_0 (m) \in \mathcal{E}_0\,, 
	&\quad &\text{for} \quad m \in \mathcal{M}_0\,;\\
	&J^\vee (m) \in \mathcal{E}_1 \smallsetminus \mathcal{E}_0\,,
	&\quad &\text{for} \quad m \in \mathcal{M}_1 \smallsetminus \mathcal{M}_0\,.	
	\end{aligned}
	\right.
	\label{eq:B4.6}
	\end{equation}
Then $J_1$ is a child selection in the augmented network $(\mathcal{E}_1, \mathcal{M}_1)$.
We claim
	\begin{equation}
	\det S_1^{J_1 (\mathcal{M}_1)}\quad \neq \quad  0\,.
	\label{eq:B4.7}
	\end{equation}
Indeed the square restriction of the stoichiometric $S_1$ in \eqref{eq:B4.7} is block triangular, by extension property \eqref{eq:B4.1}.
By construction \eqref{eq:B4.6} the two diagonal blocks are associated to $J_0$ and $J^\vee$, respectively. 
Their determinants are nonzero by \eqref{eq:B4.5} and \eqref{eq:B4.3}, respectively. 
This establishes claim \eqref{eq:B4.7}. 
Invoking theorem~\ref{thm:2.1} proves algebraic regularity of the augmented network $(\mathcal{E}_1, \mathcal{M}_1)$.

Next assume influence $j^* \leadsto \alpha$ in the subnetwork $(\mathcal{E}_0, \mathcal{M}_0)$; see \eqref{eq:B4.4}.
Choose the associated (partial) child selections $J_0, J_0^\vee$ in $(\mathcal{E}_0, \mathcal{M}_0)$ as specified in theorems~\ref{thm:2.2}, ~\ref{thm:2.3}. Of course, the (partial) child selections $J_0, J_0^\vee$ depend on $j^*$ and on the choice of $\alpha \in \mathcal{E}_0 \cup \mathcal{M}_0$.
The same augmentation \eqref{eq:B4.6}, as before, then proves $j^* \leadsto \alpha$ is inherited by the augmented network $(\mathcal{E}_1, \mathcal{M}_1)$.
This proves the theorem.
\end{proof}

We  comment on the special case $\mathcal{M}_1 = \mathcal{M}_0$ of the above theorem, where the augmented network only adds some new reactions to the same set of metabolites.
Then a partial child selection $J^\vee$ is not required in \eqref{eq:B4.2}, \eqref{eq:B4.6} and all influences $j^* \leadsto \alpha$ in the subnetwork $(\mathcal{E}_0, \mathcal{M}_0)$  persist under the augmentation $(\mathcal{E}_1, \mathcal{M}_1) = (\mathcal{E}_1, \mathcal{M}_0) \supseteq (\mathcal{E}_0, \mathcal{M}_0)$.
Adding feed or exit reactions are particular examples: see section \ref{sec:disc}.
Once again, we caution our reader that such modifications actually may disrupt steady state analysis, even though our sensitivity results remain valid -- on a voided example.

%%%%%%%%%%%%%%%%%%%%%%%%%%%%%%%%%%%%%%%%%%%%%%%%%%%%%%%%%%%

\section{Transitivity}
\label{sec5:trans}

In this section we prove claims (i) and (ii) of transitivity theorem~\ref{thm:2.4}.
Although theorems~\ref{thm:2.2} and~\ref{thm:2.3} characterize flux influence $j^* \leadsto j'$ and metabolite influence $j^* \leadsto m'$ in complete detail, we did not succeed to prove our transitivity claims as a direct consequence of these characterizations.
The main obstacle was to relate, match, and merge the first child selection, given by the assumptions $\alpha \leadsto m$, or $ \alpha \leadsto j$, with the second child selection, given by $j \leadsto \beta$.
Instead, we will use differentiation with respect to an intermediary reaction term $r_{jm}$, where $m \vdash j$.

We first show how claim (i) of transitivity theorem~\ref{thm:2.4} implies claim (ii).
For $\alpha = j$ or $ j = \beta$, in assumption \eqref{eq:2.9}, there is nothing to prove.
Next, consider $\alpha \neq j \neq \beta$ and assume $\alpha = j^* \leadsto j \neq \alpha$. By definition \eqref{eq:1.16} of flux sensitivity,
	\begin{equation}
	0 \neq \Phi_j^\alpha := 
	(R \delta x^\alpha)_j=
	\sum_{m\, \vdash j} r_{jm}\, \delta x_m^\alpha
	\label{eq:5.1a}
	\end{equation}
must hold, algebraically.
This means that there exists at least one input $m \vdash j$ for which $\delta x_m^\alpha \neq 0$ holds, algebraically.
In other words
	\begin{equation}
	\alpha \leadsto m \vdash j\,;
	\label{eq:5.1b}
	\end{equation}
see \eqref{eq:3.3}.
Since we have also assumed $j \leadsto \beta$ in \eqref{eq:2.9}, assumption \eqref{eq:2.8} of theorem \ref{thm:2.4}(i) is satisfied.
This proves $\alpha \leadsto \beta$, as claimed in (ii).

It remains to prove claim (i).
In the notation of \eqref{eq:4.9}, assumption \eqref{eq:2.8} implies that
	\begin{equation}
	z_m^\alpha =
	-e_m^T B^{-1} e_\alpha \neq 0 \neq
	-e_\beta^T B^{-1} e_j  = z_\beta^j
	\label{eq:5.2}
	\end{equation}
both hold, algebraically; see \eqref{eq:3.3} again.
We have to show that
	\begin{equation}
	z_\beta^\alpha = 
	-e_\beta^T B^{-1} e_\alpha  \neq 0
	\label{eq:5.3}
	\end{equation}
holds, algebraically.
By \eqref{eq:4.3} and Cramer's rule \eqref{eq:4.8}, the explicit algebraic expression for $z_\beta^\alpha$ is at most fractional linear in the variable $r_{jm}$.
To show \eqref{eq:5.3} it is therefore sufficient to partially differentiate the algebraic expression \eqref{eq:5.3} with respect to $r_{jm}$ and show
	\begin{equation}
	\partial_{r_{jm}}\, z_\beta^\alpha \neq 0
	\label{eq:5.4}
	\end{equation}
holds, algebraically.
Note that definition \eqref{eq:3.2} of $B$ implies $\partial_{r_{jm}}B= e_je_m^T$, for $m \vdash j$.
Therefore \eqref{eq:5.2} implies
	\begin{equation}
	\begin{array}{lccccr}
	\partial_{r_{jm}} \, z_\beta^\alpha 
	& = &
	- e_\beta^T \partial_{r_{jm}}\, B^{-1}e_\alpha
	& = &
	e_\beta^T B^{-1} ( \partial_{r_{jm}}\, B)
	 B^{-1} e_\alpha & =\\
	& =  &
	e_\beta^T B^{-1} e_je_m^T B^{-1} e_\alpha 
	& = &
	\left(e_\beta^T B^{-1} e_j \right) \cdot
	\left( e_m^T B^{-1} e_\alpha\right) & =\\
	& = &
	z_\beta^j \cdot z_m^\alpha
	& \neq & 0 \,. &
	\end{array}
	\label{eq:5.5}
	\end{equation}
This little calculation proves \eqref{eq:5.4}, and transitivity theorem~\ref{thm:2.4}.

%%%%%%%%%%%%%%%%%%%%%%%%%%%%%%%%%%%%%%%%%%%%%%%%%%%%%%%%%%%

\FloatBarrier
\section{Computational Aspects}\label{sect:algo}

We briefly discuss how to calculate the influence graph, in this section.
We are aware of three schools of thought, which provide algorithms for computing the influence graph. First, we can consider the non-influence question $j^*\not\leadsto \beta$, i.e.~$B^{-1}_{\beta j^*}\equiv 0$, as a question of \emph{polynomial identity testing}. Fast probabilistic algorithms for this problem are available. 
Second, we could consider $B$ as a layered mixed matrix and employ deterministic matroid algorithms, following \cite{murota2009matrices}, to obtain a provably correct result.
A third viewpoint is described in \cite{giordano2015computing}.
Their algorithms do not just compute generic influences $j^*\leadsto \beta$, but even compute sign $B^{-1}_{\beta j^*}$, under certain additional assumptions. However, runtime is exponential in $|\mathcal M|+|\mathcal E|$.
This is impractical, already, for applications of moderate size like the TCA-cycle discussed in Section \ref{sec7:extca}. We only report on the fast probabilistic approach here.

As we have seen, every entry of $B^{-1}$ is a rational function in the rate variables $r_{jm}$.
Although the numerators and denominators are of degree at most $M=|\mathcal M|$, they may contain a number of monomials which grows exponentially with $M$.
Symbolic representations of $B^{-1}$ should therefore be avoided, at all cost, even for networks of moderate size.
To check for $B^{-1}_{\beta j^*}\equiv 0$, probabilistically, we evaluate the matrix inverse for specific values of $r_{jm}$ which are chosen at random.
These values need not be related to any actual numerical values of $r_{jm}$ in any biological application or \emph{in silico} simulation.
Instead we use the following Schwartz-Zippel lemma.

\begin{lem} \cite{schwartz1980fast}\label{lem:schwartz-zippel}
Let $\mathbb{F}$ be a field, and $q: \mathbb{F}^N \to \mathbb{F}$ a nonzero polynomial of degree at most $M$, in $N$ variables. Let $T\subseteq \mathbb{F}$ by any finite test set.
Let $P(q=0\text{ in }T^N)$ denote the probability to obtain $q(x)=0$ for some random $N$-vector $x$, uniformly distributed in $T^N$. Then
\begin{equation}
P\left(q=0 \text{ in } T^N\right)\ \le \  M/|T|\,.
\label{eq:8.1}
\end{equation}
\end{lem}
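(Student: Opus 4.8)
The plan is to establish the Schwartz-Zippel bound by induction on the number $N$ of variables, following the classical argument of peeling off one variable at a time.

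First I would dispose of the base case $N=1$. Here $q$ is a nonzero univariate polynomial of degree at most $M$ over the field $\mathbb{F}$, so it has at most $M$ roots; in particular at most $M$ of the $|T|$ elements of $T$ can be zeros, and a uniformly random draw from $T$ lands on a root with probability at most $M/|T|$, as claimed.

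For the inductive step with $N \ge 2$, the key idea is to single out the last variable $x_N$ and expand
\[
q(x_1,\ldots,x_N) = \sum_{i=0}^{d} c_i(x_1,\ldots,x_{N-1})\, x_N^{\,i},
\]
where $d \le M$ is the highest power of $x_N$ that actually occurs, so that the leading coefficient $c_d$ is a \emph{nonzero} polynomial in the $N-1$ variables $x_1,\ldots,x_{N-1}$ of total degree at most $M-d$. I would then split the event $\{q=0\}$ according to whether the partial assignment $(x_1,\ldots,x_{N-1})$ already annihilates $c_d$, using the independence of the coordinates under the uniform product measure on $T^N$ to justify conditioning on $(x_1,\ldots,x_{N-1})$. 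If $c_d(x_1,\ldots,x_{N-1})=0$, the inductive hypothesis applied to $c_d$ (nonzero, $N-1$ variables, degree at most $M-d$) gives probability at most $(M-d)/|T|$. If instead $c_d(x_1,\ldots,x_{N-1})\neq 0$, then for those fixed values $q$ is a genuine univariate polynomial in $x_N$ of degree exactly $d$, hence has at most $d$ roots, so a uniform $x_N \in T$ yields $q=0$ with conditional probability at most $d/|T|$. Combining these via
\[
P(q=0) \le P(c_d=0) + P\bigl(q=0 \mid c_d \neq 0\bigr) \le \frac{M-d}{|T|} + \frac{d}{|T|} = \frac{M}{|T|}
\]
closes the induction.

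\emph{The main obstacle} is purely the bookkeeping of degrees: one must verify that the leading coefficient $c_d$ really has degree at most $M-d$ (which follows since the term $c_d\,x_N^{\,d}$ has total degree $\deg c_d + d \le M$), so that the inductive hypothesis applies with exactly the right parameter and the two contributions telescope to $M/|T|$ rather than to something weaker. The probabilistic splitting itself is routine once the conditioning is set up, so the entire content of the proof lies in choosing the induction variable and tracking the degree bookkeeping through the two subcases.
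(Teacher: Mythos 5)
The paper does not prove this lemma at all: it is quoted verbatim from Schwartz's 1980 paper \cite{schwartz1980fast} and used as a black box in the algorithmic section, so there is no in-paper argument to compare against. Your proof is the standard and correct inductive argument for the Schwartz--Zippel bound: isolating the top power $d$ of $x_N$, bounding $P(q=0)$ by $P(c_d=0)+P(q=0\mid c_d\neq 0)$, applying the inductive hypothesis to the nonzero leading coefficient $c_d$ of total degree at most $M-d$, and using the at-most-$d$-roots bound for the univariate specialization; the two contributions telescope to $M/|T|$ exactly as you state. The degree bookkeeping you flag as the main obstacle is indeed the only delicate point, and you handle it correctly (including, implicitly, the degenerate case $d=0$, where the conditional term vanishes and the inductive hypothesis alone gives the bound). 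In short: correct, complete up to routine measure-theoretic conditioning, and consistent with the cited source.
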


The field $\mathbb{F}$ is chosen to computationally recognize exact zeros.
This excludes floating point arithmetic.
There are two obvious choices: $\mathbb{F}=\mathbb{Q}$, or the finite Galois fields $\mathbb{F}=\mathrm{GF}(p^n)$ for some prime $p$.
We choose to work in $\mathbb{F}=GF(p)=\mathbb{Z}_p$, for simplicity and speed. 
We choose a moderately sized random prime $p\in [k, 2k]$.
Already for $k=2^{127}$, this makes $|T|=p$ so ludicrously large that
the Schwartz-Zippel lemma \ref{lem:schwartz-zippel} practically excludes false zero results $q(x)=0$ for ``unlucky'' random choices of the components $r_{jm}$ of $x$.

A more subtle danger is caused by \emph{unlucky primes} $p$. These are primes $p$ which divide any of the numerators, or the denominators, of the symbolic inverse $B^{-1}$. Unlucky primes may produce false zeros, or false singularities.

We crudely estimate the number of unlucky primes, as follows. Let $\ell$ denote an upper bound of the greatest common divisor of all terms appearing in the numerator or in the denominator of a single entry of $B^{-1}$.
Then, any number bounded by $\ell$ can have at most $\log \ell / \log k$ different prime factors in the range $[k, 2k]$, out of the asymptotically ${k}/{\log k}$ existing primes in the same range.
Hence, if we choose the prime $p\in [k,2k]$ uniformly at random, we need $k\gg \log \ell$ in order to avoid a single false zero due to unlucky primes.
This requires $k\gg (|\mathcal M|+|\mathcal E|)^2\log \ell$ in order to avoid all possible false zero entries in $B^{-1}$, independently.

The greatest common divisor of a set of positive integers is bounded above by their minimum.
The Cramer determinants of $B$ are bounded by Hadamard's inequality, i.e.~the matrix norm, and we obtain an upper bound $\ell \le \left(1+\max_j|S_j|\right)^{|\mathcal E|}$.
Here $|S_j|=\sum_{m\in\mathcal M} |S_{mj}|$, and $+1$ accounts for the $-\mathrm{id}_{\mathcal E}$-part of $B$.
Hence, our extremely crude estimate requires the following lower bound on $k$:
\begin{equation}
k\gg |\mathcal E|(|\mathcal E|+|\mathcal M|)^2\log(1+\max_j|S_j|).
\label{eq:size-of-k}
\end{equation}
Again, the factor $(|\mathcal E|+|\mathcal M|)^2$ counts the entries of $B^{-1}$, independently.

A value of $k=2^{127}$ satisfies the crude requirement \eqref{eq:size-of-k}, for any metabolic network in databases like \cite{KEGG, BernhardD}. Practically, this eliminates the problem of unlucky primes $p$ and unlucky rate entries $r_{jm}$.
The computational overhead over floating point arithmetic turns out to be very moderate for primes of such order.

In summary, a single matrix inversion of $B$ with random rates $r_{jm} \mod p$, for a random prime $2^{127}<p<2^{128}$, is sufficient to compute the influence relation $\leadsto$ with an error probability far below the probability of  manufacturing defects in the hardware and cosmic ray interference.
This matrix inversion is a trivial task on semi-modern hardware and for realistic sizes $|\mathcal E|+|\mathcal M| \le 500$ of the metabolic network.
All remaining tasks for the construction of the full flux influence graph -- computing strong connected components and a transitive reduction -- are at least as fast as the matrix inversion.

Our computations were done in the Sage framework, \cite{sage}, which internally uses the fast library FFPack for linear algebra over finite fields,  \cite{fflas-ffpack}.
Compared to floating point arithmetic, the matrix inversion over $\mathbb{Z}_p$ incurred a runtime overhead of less than a factor four, for random primes up to order $p\approx k\approx  2^{500}$.
\texttt{Runtimes for the TCA cycle (48 reactions, 29 metabolites) were on the order of milliseconds on a standard laptop.
}

%%%%%%%%%%%%%%%%%%%%%%%%%%%%%%%%%%%%%%%%%%%%%%%%%%%%%%%%%%%%%%%%%%%

\FloatBarrier
\section{Example: The carbon metabolic TCA cycle}
\label{sec7:extca}

\begin{table}[t!]
%\begin{subfigure}[b]{0.45\textwidth}
{\scriptsize
\begin{minipage}[t]{0.45\linewidth}
    \begin{tabular}[t]{|>{\texttt\bgroup}c<{\egroup}|>{\texttt\bgroup}c<{\egroup} @{\quad$\to$\quad} >{\texttt\bgroup}c<{\egroup}|}
\hline    
Reaction & Inputs &  Outputs\\
\hline
1 &  Glucose  + PEP & G6P + PYR\\
2a &  G6P & F6P\\
2b &  F6P & G6P\\
3 &  F6P & F1,6P\\
4 &  F1,6P & G3P  + DHAP\\
5 &  DHAP & G3P\\
6 &  G3P & 3PG\\
7a &  3PG & PEP\\
7b &  PEP & 3PG\\
8a &  PEP & PYR\\
8b &  PYR & PEP\\
9 &  PYR & AcCoA  + CO2\\
10 &  G6P & 6PG\\
11 &  6PG & Ru5P +  CO2\\
12 &  Ru5P & X5P\\
13 &  Ru5P & R5P\\
14a &  X5P  + R5P & G3P + S7P\\
14b &  G3P +  S7P & X5P  + R5P\\
15a &  G3P + S7P & F6P + E4P\\
15b &  F6P +  E4P & G3P +  S7P\\
16a &  X5P  + E4P & F6P  + G3P\\
16b &  F6P +  G3P & X5P +  E4P\\
17 &  AcCoA  + OAA & CIT\\
18 &  CIT & ICT\\
19 &  ICT & 2-KG + CO2\\
20 &  2-KG & SUC + CO2\\
21 &  SUC & FUM\\
22 &  FUM & MAL\\
23a &  MAL & OAA\\
23b &  OAA & MAL\\
\hline
    \end{tabular}%
\end{minipage}
\begin{minipage}[t]{0.45\linewidth}
    \begin{tabular}[t]{|>{\texttt\bgroup}c<{\egroup}|>{\texttt\bgroup}c<{\egroup} @{\quad$\to$\quad} >{\texttt\bgroup}c<{\egroup}|}
\hline
Reaction & Inputs &  Outputs\\
\hline
24a &  PEP +  CO2 & OAA\\
24b &  OAA & PEP  + CO2\\
25 &  MAL & PYR + CO2\\
26 &  ICT & SUC + Glyoxylate\\
27 &  Glyoxylate + AcCoA & MAL\\
28 &  6PG & G3P + PYR\\
29 &  AcCoA & Acetate\\
30 &  PYR & Lactate\\
31 &  AcCoA & Ethanol\\
\hline
f1 &   & Glucose\\
\hline
d1 &     Lactate   &\\
d2 &     Ethanol   &\\
d3 &     Acetate   &\\
d4 &  R5P & \\
d5 &  OAA &\\ 
d6 &  CO2 & \\
\hline
dd1 &  G6P & \\
dd2 &  F6P & \\
dd3 &  E4P & \\
dd4 &  G3P& \\
dd5 &  3PG & \\
dd6 &  PEP & \\
dd7 &  PYR &\\
dd8 &  AcCoA &\\
dd9 &  2-KG &\\
\hline
X1 & Glucose &\\
\hline
N1 & S7P & S1,7P\\
N2 & S1,7P & E4P + DHAP\\
\hline
%    \bottomrule
    \end{tabular}%
\end{minipage}}
\caption{\emph{Reactions in the TCAC metabolic network; see \cite{ishii, ishii2}.}}\label{table:tca-reactions}
%\end{subfigure}\quad
%\caption{Reactions in the TCAC metabolic network and dis}
\end{table}

Let us illustrate our analysis with a realistic class of examples.
The previous paper \cite{fiedler15example} discussed a variant of the tricarboxylic citric acid cycle (TCAC) in \emph{E. coli}.
Perturbation experiments by knockout of enzymes have been reported in \cite{ishii, ishii2}.
The relevant reactions are listed in Table \ref{table:tca-reactions}. Table \ref{table:subnets} defines five variants $A$--$E$ of this network which we will discuss.
For a graphical representation of the metabolic network, see fig.~\ref{fig:tca-net}. 

We will only discuss sensitivity of steady states here, for largely arbitrary reaction rate functions.
For interesting examples of bifurcations and oscillations, based on prescribed steady states and mass action kinetics with certain compatible, but randomized, choices of rate coefficients, see \cite{grofeu, groTCA}.

\begin{table}
%\begin{subfigure}[b]{0.45\textwidth}
\begin{tabular}{|c|c|p{7cm}|}
\hline
Variant& Included reactions &Comment\\
\hline
A & \texttt{1-31},\texttt{f1}, \texttt{d1}-\texttt{d6}& Reduced Network from \cite{ishii}, as discussed in \cite{fiedler15example}.\\
B & \texttt{1-31},\texttt{f1}, \texttt{d1}-\texttt{d6}, \texttt{dd1}-\texttt{dd9}& Network from \cite{ishii}, augmented by further exit reactions.\\
C & \texttt{1-31},\texttt{f1}, \texttt{d1}-\texttt{d6}, \texttt{dd1}-\texttt{dd9}, \texttt{X1}& Artificial network to discuss \texttt{Glucose} decay.\\
D & \texttt{1-31},\texttt{f1}, \texttt{d1}-\texttt{d6}, \texttt{N1}, \texttt{N2}& Network proposed in \cite{ishii2}, introducing the novel metabolite \texttt{S1,7P} and re\-actions \texttt{N1}, \texttt{N2}, with reduced exit reactions in the spirit of $A$.\\
E & \texttt{1-31},\texttt{f1}, \texttt{d1}-\texttt{d6}, \texttt{dd1}-\texttt{dd9}, \texttt{N1}, \texttt{N2}& The union of networks $B$ and $D$.\\ \hline
%E & \texttt{1-31},\texttt{f1}, \texttt{d1}-\texttt{d6}, \texttt{dd1}-\texttt{dd9}, \texttt{N1}, \texttt{N2}& Newer research \cite{ishii2} discovered novel reactions and involved metabolites by knockout experiments, which are included here.
\end{tabular}
\caption{\emph{Variants of the TCAC metabolic network discussed in the text.}}\label{table:subnets}
\end{table}

\begin{figure}\centering
\includegraphics[width=0.9\textwidth]{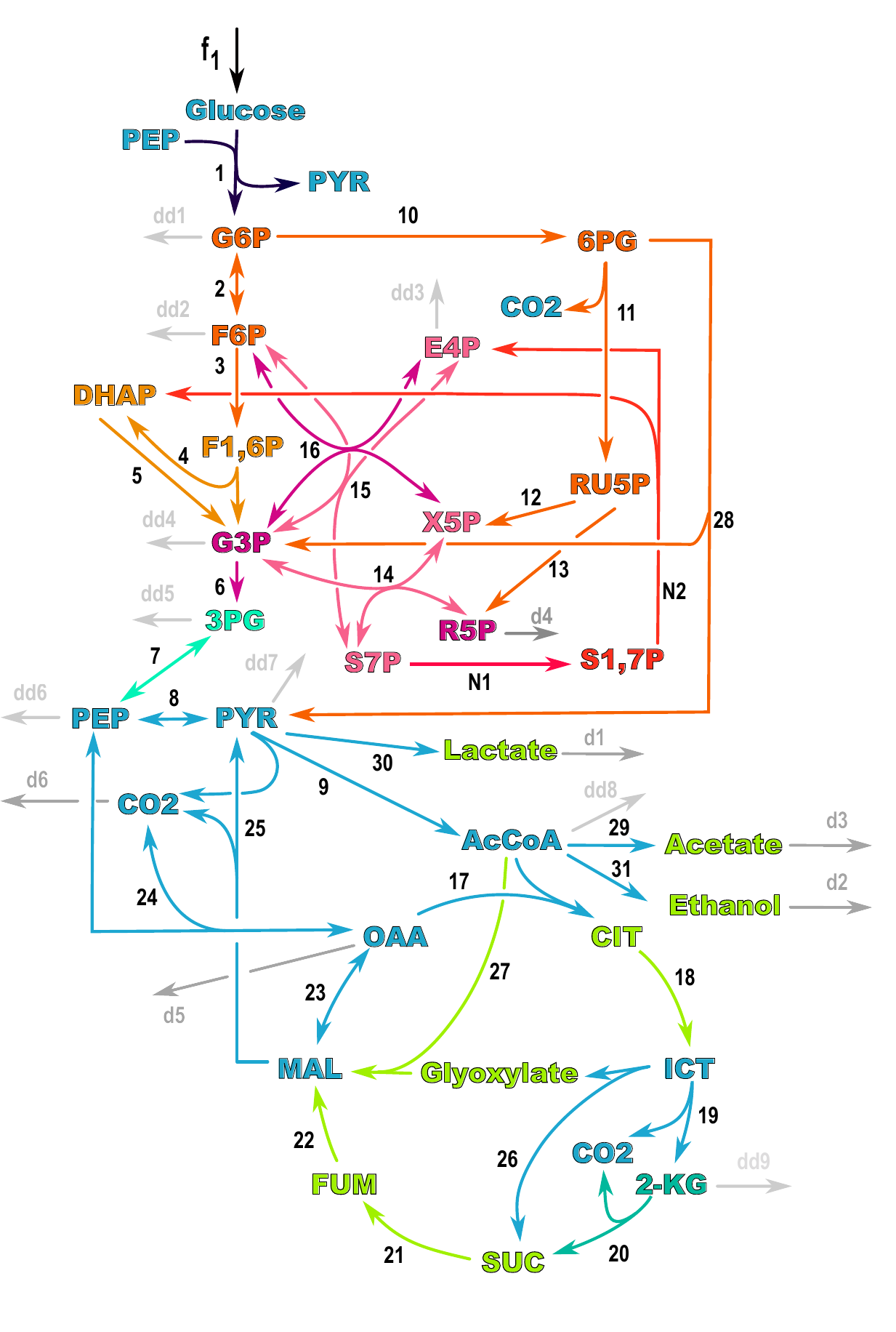}
\caption{\emph{The reaction network of the carbon metabolism TCA cycle of \emph{E.~coli}. The complete set of reactions is shown in Table \ref{table:tca-reactions}, and the discussed model variants are listed in Table \ref{table:subnets}.
Note that the metabolites \texttt{PEP}, \texttt{CO2}, \texttt{PYR} appear in multiple locations, in order to avoid excessive intersections of reaction edges.
Colors (online) indicate the grouping by influence in the models $A,D,E$ of \cite{ishii} and \cite{ishii2}; see fig.~\ref{fig:tca-ade}.
The graphical representations are courtesy of Anna~Karnauhova.}}\label{fig:tca-net}
\end{figure}

We will first discuss the model variant $A$, which consists of the internal reactions $1-31$, the feed reaction $\texttt{f1}$, and monomolecular exit reactions $\texttt{d1}-\texttt{d6}$.

The feed reaction \texttt{f1}, by definition, does not depend on any internal metabolite.
Indeed, the careful experiments in \cite{ishii, ishii2} normalized all measurements by total \texttt{Glucose} uptake.
This fixes \texttt{f1}, effectively.
It is not necessary to include the feed \texttt{f1} in our model, at all, as long as we are only interested in the influence graph of the remaining reactions. Adding reactions which have rates independent of all metabolites, in our model, will not change any influence relations between reactions and metabolites of the smaller model.
Indeed, see theorem \ref{thm:B4.1} with $\mathcal M_1=\mathcal M_0$.

Without the feed \texttt{f1}, on the other hand, the resulting network will not possess any nontrivial steady state at all: \texttt{Glucose} is consumed but never replenished, and the trivial zero state $x \equiv 0$ becomes globally attracting.
We can still compute, visualize and discuss the influence graph for such an incomplete network, formally, with the tacit understanding that it will become meaningful when we add the necessary feed reactions.
Evidently we do not even need to know, or account for, these external feed reactions, as long as we do not study their own influence on the network.

Exit reactions, in contrast, need to be included in the model. They are essential for the invertibility of $B$, and their presence or omission may affect the influence graph. The paper \cite{ishii} does not include the monomolecular exit reactions \texttt{d1}-\texttt{d3} explicitly.
Without them, however, the products \texttt{Lactate}, \texttt{Ethanol} and \texttt{Acetate} accumulate indefinitely.
Moreover, the resulting matrix $B$ becomes noninvertible.
The common practice of omitting such ``obvious'' reactions from networks, in the published literature, will be put under scrutiny below.
For a seriously cautioning menetekel see also section \ref{sec:disc}.

The full influence graph $\mathcal{F}$ of model $A$ is sketched in graphical form in fig.~\ref{fig:vanilla-tca}, and in tabular form in fig.~\ref{fig:tca-abc-table}.
As in fig.~\ref{fig:3.1}, we represent each vertex $\varphi \mathcal M^d(\varphi)$ of the full influence graph as a table with one column and two rows.
The top entry contains the reactions $j$ in the equivalence class $\varphi$ of mutual flux influence. The bottom entry contains the directly influenced metabolites $m \in \mathcal M^d(\varphi)$.
Self-influence is represented by boldface font, if the first row $\varphi$ has only a single entry. See section \ref{sec2:mainres}, theorem \ref{thm:2.7}, and in particular \eqref{eq:2.10} -- \eqref{eq:2.29}.

\begin{figure}\centering
\includegraphics[width=\textwidth]{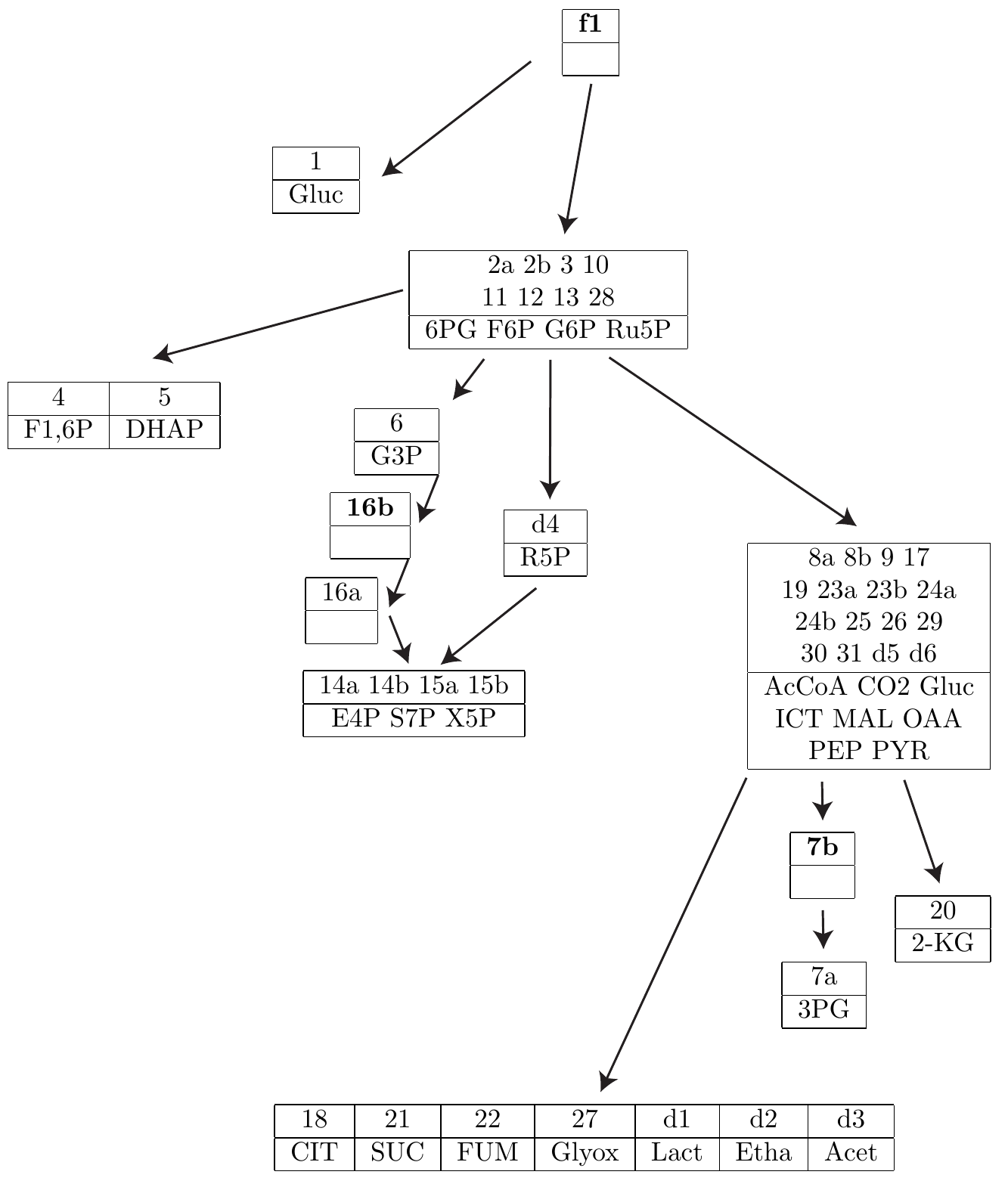}
\caption{\emph{The full flux influence graph of the TCA cycle, model variant $A$. Boldface reactions indicate self-influence $j^*\leadsto j^*$, for $j^*=f1, 7b, 16b$.
Arrows to box arrays with multiple columns point to each column separately.}}\label{fig:vanilla-tca}\label{fig:vanilla-tca-coalesced}
\end{figure}

As in fig.~\ref{fig:3.1}, we coalesce fanout-vertices into a single vertex table, albeit with several columns.
Fanout-vertices are vertices, which are subordinate to the same shared vertex in the full influence graph, and do not possess further influence.
Chains of single children provide natural examples which get coalesced.
See fig.~\ref{fig:vanilla-tca-coalesced}.

The monomolecular exit reactions \texttt{d1}-\texttt{d3} are single children, which prevent prevent perpetual increase of their mother metabolites \texttt{Lactate}, \texttt{Ethanol}, and \texttt{Acetate} by simple decay.
They are all activated by the \texttt{Glyoxylate} citric acid cycle in the lower part of fig.~\ref{fig:tca-net}.
Since single children have no influence, and since mere exit reactions do not enlarge the set $\mathcal M$ of metabolites, their addition or (formal) omission only adds or omits their own \texttt{child\{mother\}} box, subject to influence by others.
In fig.~\ref{fig:tca-abc} we see how the boxes of \texttt{d1}-\texttt{d3} are influenced by the same vertex box $\varphi \mathcal M^d(\varphi)$, and how their boxes have been coalesced to contribute three flow columns of one larger terminal vertex.

We discuss model variants $B,C$ of the same TCA cycle next, to explore the effect of additional monomolecular exit reactions.
The experimental study \cite{ishii}, which we call variant $B$, includes nine additional monomolecular exit reaction $\texttt{dd1}-\texttt{dd9}$.
For purely illustrative purposes, in variant $C$, we also add an artificial monomolecular exit reaction $\texttt{X1:}\ \texttt{Glucose}\to \cdot\ $.
 The resulting influence graphs are included in fig.~\ref{fig:tca-abc}. Note how the augmentation by the additional exit reaction $\texttt{X1}$ of model $C$ has a particularly strong coarsening effect on the influence structure.
We also provide the full influence graphs $\mathcal{F}$ for all three models in a single gray scale heat map, fig.~\ref{fig:tca-abc-table}.

\begin{figure}\centering
\includegraphics[width=\textwidth]{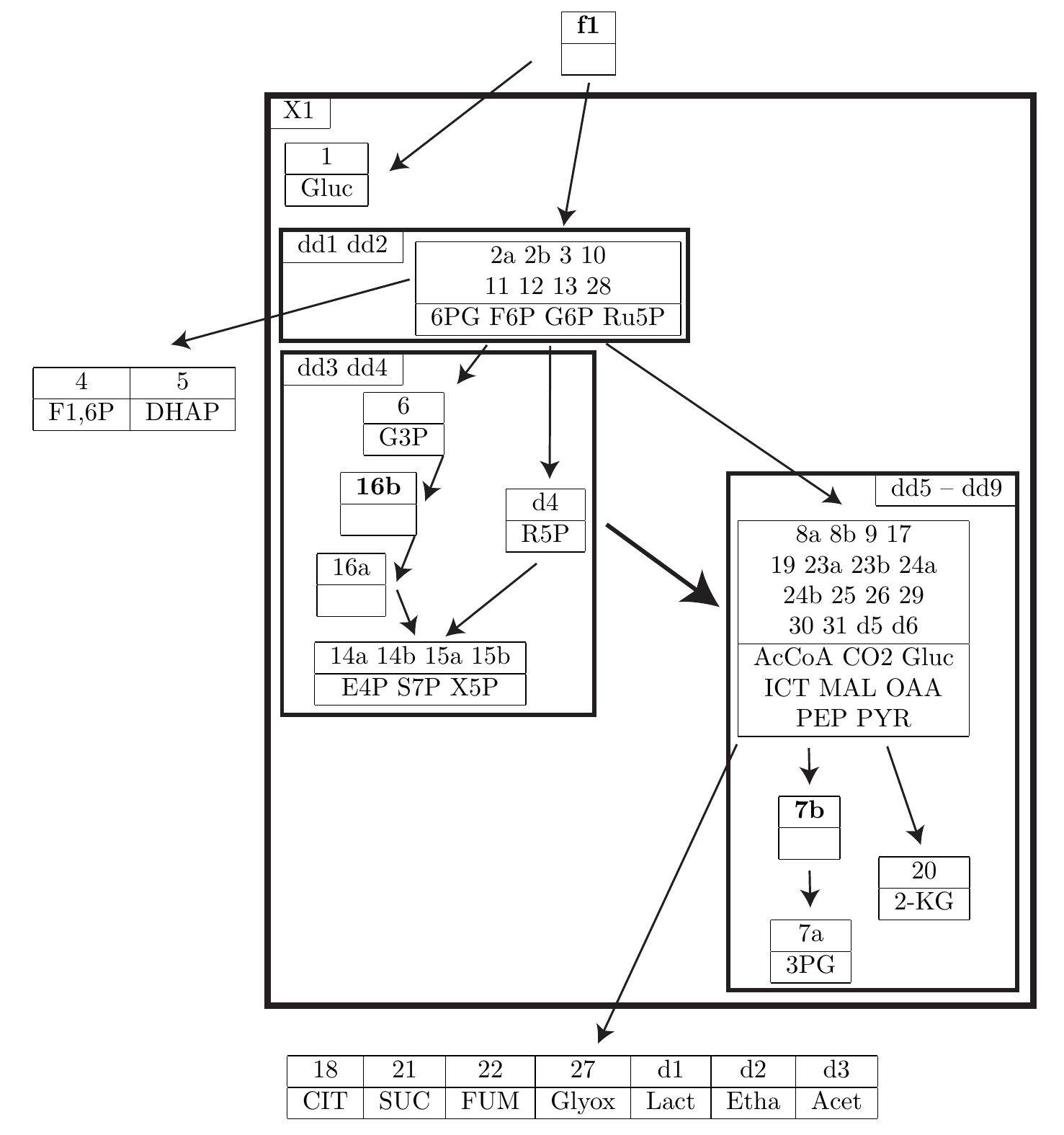}
\caption{\emph{The combined full flux influence graphs of the TCA cycles, model variants $A, B, C$. Notation as in fig.~\ref{fig:vanilla-tca}. The lumping effect of the monomolecular exit reactions \texttt{dd1}--\texttt{dd9}, added in model $B$, is indicated by bold frames. Such lumping frames merge all interior reactions and metabolites into a new vertex box $\varphi\mathcal M^d(\varphi)$. This includes the labels of the added reactions in their upper corners. Note the new bold arrow which is not inherited from model $A$. The artificial glucose exit \texttt{X1} of model $C$ generates the largest lump box.}}\label{fig:tca-abc}
\end{figure}

\begin{figure}
\resizebox{\textwidth}{!}{\large

\newcolumntype{P}[1]{>{\hspace{0pt}\rule{0pt}{\cellsize}}p{#1}}
\def\tmpCola{\cellcolor[gray]{1.0}} %\makebox(0,0){stuff} 0
\def\tmpColb{\cellcolor[gray]{0.7}} %\makebox(0,0){stuff} 1
\def\tmpCold{\cellcolor[gray]{0.5}} %\makebox(0,0){stuff} 3
\def\tmpColh{\cellcolor[gray]{0.3}} %\makebox(0,0){stuff} 7
\def\cellsize{2mm}
\def\bigcellsize{8mm}
% [inline block 0: 1 envs, 68003 chars -> data_tex | \begin{tabular}{*{2}{P{\cellsize}}|*{56}{P{\cellsize}|}}   &  \multicolumn{1}{l}{}  &  \multicolumn{1}{l}{}  &  \multico...]

}
\caption{\emph{Flux influence relations in the three variants $A,B,C$ of the TCA cycle.
As in fig.~\ref{tbl:3.1}, influence $\alpha \leadsto \beta$ is indicated in column $\alpha \in \mathcal{E}$ and row $\beta \in \mathcal{E} \cup \mathcal{M}$.
Gray scales encode the influence in the respective model variant.
Here \cbox{black!0}\,:
no influence in any variant;
\cbox{black!30}\,:
influence only in model $C$;
\cbox{black!50}\,:
influence in models $B, C$;
\cbox{black!70}\,:
influence in all three models $A,B,C$.}}
\label{fig:tca-abc-table}
\end{figure}

The addition of further monomolecular exit reactions $\texttt{dd1},\ldots,\texttt{dd9}$ and \texttt{X1} in model variants $B$ and $C$, respectively,
lumps (alias, merges or collapses) certain vertices of model $A$ and successively coarsens the full flux influence graphs, see fig.~\ref{fig:tca-abc}. Indeed, reactions $\langle\texttt{dd1},\texttt{dd2}\rangle$ in $B$ have enlarged the previous upper vertex $\langle \texttt{2a},\texttt{2b},\ldots,\texttt{28}\rangle$ of model $A$, fig.~\ref{fig:vanilla-tca}.
The vertices of reactions $ \texttt{6}, \texttt{16a}, \langle\texttt{16b}\rangle$, $\texttt{d4}$, and $ \langle\texttt{14a}, \texttt{14b}, \texttt{15a}, \texttt{15b}\rangle$ in $A$ have been lumped into a single vertex, in model $B$, augmented by $ \texttt{dd3},\texttt{dd4}$.
The remaining exit reactions $\texttt{dd5},\ldots,\texttt{dd9}$ of $B$ lump, and augment, the vertices $ \langle\texttt{8a},\texttt{8b},\ldots,\texttt{d6}\rangle$ and $ \texttt{7a}, \langle\texttt{7b}\rangle, \texttt{20}$ from model $A$. 
Reactions \texttt{7a} and \texttt{20}, for example, lose their single child status due to monomolecular exit reactions \texttt{dd5} and \texttt{dd9}, respectively. 
We conclude that the lumping caused by the monomolecular exit reactions of model B, in the TCAC reaction network of fig.~\ref{fig:tca-net}, emphasizes the grouping into phosphorylation, by the upper two vertices, versus the large lower vertex of the citric acid cycle.

\texttt{Glucose} is the central driving feed metabolite of the entire network. 
The artificial addition, in model $C$, of a new monomolecular exit reaction \texttt{X1} of $\texttt{Glucose}$ forces even stronger lumping. 
All vertices of model $B$, except for some single children, are lumped into a single large vertex of reactions $\langle \texttt{1},\ldots,\texttt{X1}\rangle$. 
In fact, \texttt{Glucose}, the single product of the single feed reaction \texttt{f1}, has lost its single child mother-status of reaction $\texttt{1}$ in the initializing chain of reactions \texttt{f1} and \texttt{1}. Perturbations of the two \texttt{Glucose} children \texttt{1} and \texttt{X1} therefore influence each other, and the \texttt{Glucose} level itself. 
In models $A$ and $B$ this could only be achieved, equivalently, by a perturbation of the driving feed rate parameter \texttt{f1} -- with sweeping influence on the whole network. 
It is therefore essential -- and has been painstakingly observed in the careful and pioneering knockout experiments by \cite{ishii, ishii2} -- to meticulously control the level of \texttt{Glucose} uptake in order to obtain any meaningful results.

Fig.~\ref{fig:tca-abc} summarizes the results of our model comparison $A$ -- $C$, in view of augmenticity. 
Innermost boxes show the full influence graph of model $A$. 
Since no new metabolites have been added in models $B,C$, augmenticity theorem \ref{thm:B4.1} with $\mathcal M_1=\mathcal M_0$ implies two possibilities. 
First, new influences involving the added reactions may lump existing vertices into larger vertex boxes. 
This is indicated by larger boldface frames around the finer structures of model $A$. 
Second, new hierarchic arrows may appear between the larger frames. 
The new hierarchy in the augmented model must be compatible with the ordering in the smaller model. 
Nevertheless, additional influence arrows may, and do, appear in the augmented model, which are not already implied by the ordering in the smaller model.
These additional influence arrows between frames are drawn to originate or terminate at the lumping frames. 
They are drawn to not reach the boxes contained inside each frame. 
This distinguishes the augmented influence arrows from the pre-existing ones.

\begin{figure}\centering
\includegraphics[width=\textwidth]{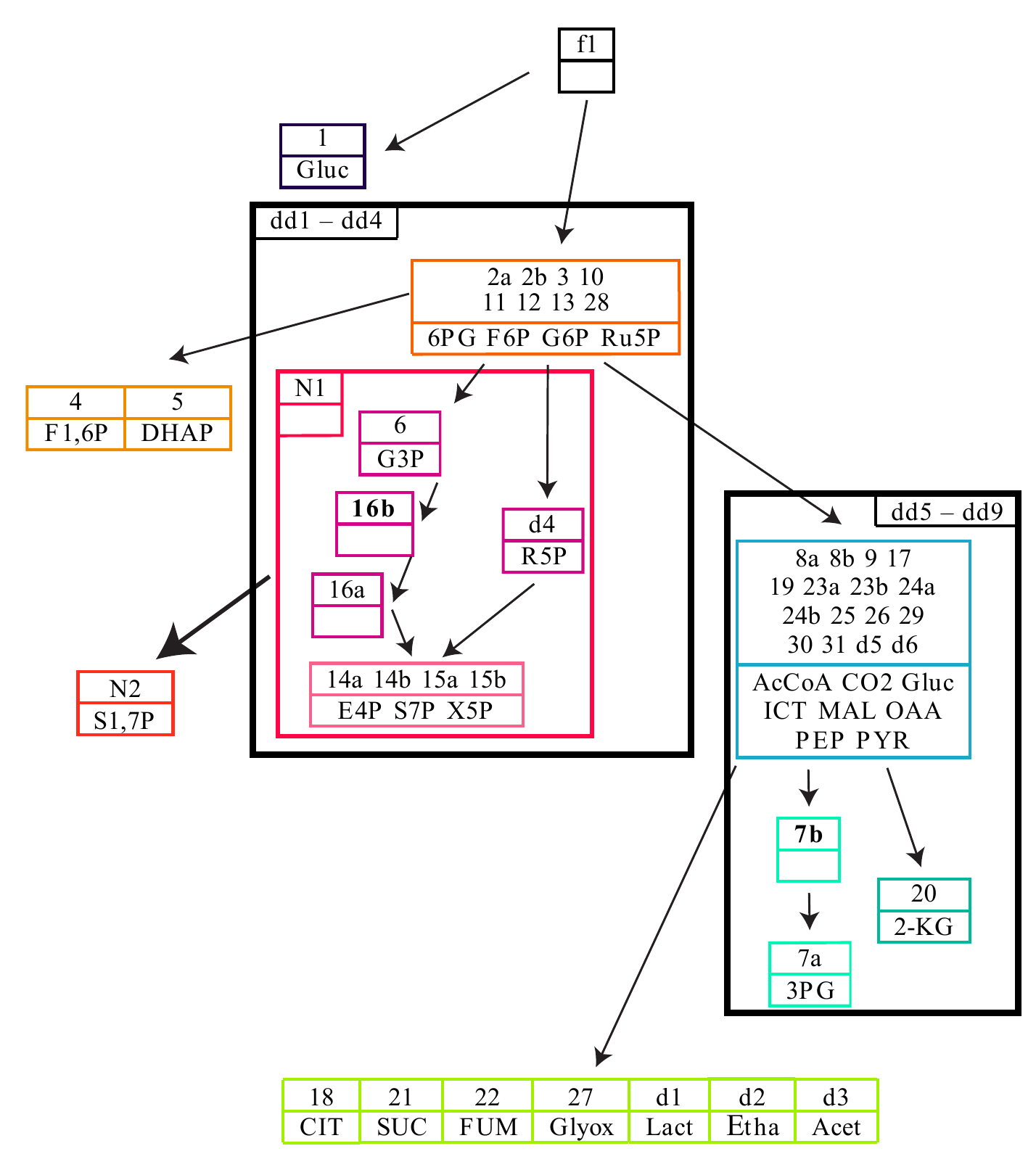}
\caption{\emph{The full flux influence graph of the TCA cycle, model invariants $A, D, E$.
Notation as in fig.~\ref{fig:vanilla-tca}.
The lumping effect of the new reactions $N1, N2$ and the new metabolite $S1,7P$, in model $D$ is indicated by the bold lumping box with label $N1\{~\}$.
Note the new bold arrow from that box to the new vertex $N2\{ S1, 7P\}$.
The lumping effect of the monomolecular exit reactions \texttt{dd1}--\texttt{dd9}, in model $E$, produces two additional lumping boxes with labels \texttt{dd1}--\texttt{dd4} and \texttt{dd5}--\texttt{dd9}, respectively, but without any additional arrows.
Color coding (online) emphasizes the successive lumping of these boxes.
The box structure is also represented in the metabolic network of fig.~\ref{fig:tca-net}, by matching colors.}}\label{fig:tca-ade}
\end{figure}

Alternatively, we can visualize the augmenticity properties of the influence structures $A$ -- $C$ in the heat map of fig.~\ref{fig:tca-abc-table}.
All three models share the same metabolite set $\mathcal{M}$.
The successive augmentations $\mathcal{E}_A \subset \mathcal{E}_B \subset \mathcal{E}_C$ of the respective reaction sets $\mathcal{E}$, therefore, only add influences, successively, but never remove any.
See augmenticity theorem~\ref{thm:B4.1}.
Consider influence $\leadsto$, or non-influence $\not\leadsto$, of column $\alpha \in \mathcal{E}$ on row $\beta \in \mathcal{E} \cup \mathcal{M}$.
Then only the following four triples, with their respective gray scales at position $\beta \alpha$, can appear for models $(A, B, C)$:
the white cell $\cbox{black!0}=(\not\leadsto,\not\leadsto,\not\leadsto)$,
the light gray cell $\cbox{black!30}=(\not\leadsto,\not\leadsto,\leadsto)$,
the medium gray cell $\cbox{black!50}=(\not\leadsto,\leadsto,\leadsto)$,
and the dark cell $\cbox{black!70}=(\leadsto,\leadsto,\leadsto)$.
The reduced Ishii model $A$ provides the most sparse, dark influence structure.
The full Ishii model $B$, with all monomolecular exit reactions included, provides the less sparse influence structure which adds the medium gray cells.
The artificial monomolecular exit \texttt{X1} of \texttt{Glucose} in model variant $C$, finally, adds all light gray cells.
The influence matrix of model $C$ becomes so crowded by entries of nonzero influence $\leadsto$ that the influence structure alone becomes visibly meaningless for the purpose of any functional understanding of the metabolite network. 
Indeed it is the original Ishii model $B$, which represents functionality in the  TCA cycle best -- better, in fact, than the less complete model $A$. 
The preference for model $A$, in \cite{fiedler15example}, was due to a lack of computational efficiency in the adhoc symbolic treatment which required the somewhat arbitrary omission of exit reactions \texttt{dd1},\ldots,\texttt{dd9} -- in spite of an investment of substantial raw computing power.

In fig.~\ref{fig:tca-ade} we study the augmentation of the reduced Ishii model $A$ by the new metabolite \texttt{S1,7P} and reactions \texttt{N1}, \texttt{N2} due to Nakahigashi~et~al; see \cite{ishii2} and model $D$. Model $E$ further augments $D$ by the monomolecular exit reactions $\texttt{dd1},\ldots,\texttt{dd9}$ of the full Ishii model $B$. The models $D,E$ augment model $A$ by the single metabolite 
\begin{equation}
\mathcal M_1\smallsetminus \mathcal M_0=\{\texttt{S1,7P}\},
\end{equation}
and at least reactions $\{\texttt{N1},\texttt{N2}\} = \mathcal E_D\smallsetminus \mathcal E_A$.
The choice $J^\vee(\texttt{S1,7P})=\texttt{N2}$, in theorem \ref{thm:B4.1}, makes the $1\times 1$ determinant \eqref{eq:B4.3} nonsingular. Therefore our previous comments on the augmentation sequence of models $A,B,C$ apply to $A, D, E$, verbatim.

More specifically, the augmentation of model $A$ by model $D$ lumps the phosphorylation branch into a single influence vertex box, augmented by the new reaction \texttt{N1}. The lumping effect is similar to the addition of exit reactions \texttt{dd3, dd4} in the Ishii model.
Reaction \texttt{N1}, in turn, produces the new metabolite \texttt{S1,7P}. The flux of the onwards reaction \texttt{N2} with educt \texttt{S1,7P} is influenced by the lumped box with label \texttt{N1}.

Further monomolecular exit reactions $\texttt{dd1},\ldots, \texttt{dd9}$ augment model $D$ to model $E$, without new metabolites.
Exit reactions $\texttt{dd1},\ldots, \texttt{dd4}$ of model $E$ simply get lumped into the phosphorylation vertex of model $D$.
The remaining exit reactions $ \texttt{dd5}, \ldots, \texttt{dd9}$, as before,  lump the citric acid cycle of $\langle \texttt{8a}, \texttt{8b},\ldots,\texttt{d6}\rangle$ with its unidirectional influences on $\texttt{7a},\langle\texttt{7b} \rangle, \texttt{20}$ into a new vertex of full mutual influence.

%%%%%%%%%%%%%%%%%%%%%%%%%%%%%%%%%%%%%%%%%%%%%%%%%%%%%%%%%%%

\section{Conclusions and discussion}
\label{sec:disc}

In conclusion, the flux influence graphs presented here, and their augmenticity, 
are designed to provide first steps towards a mathematically sound analysis of the 
sensitivity dependencies in general, multimolecular metabolic networks. 
Our results rely on the network structure, only. 
They hold true, universally and in a precise algebraic sense, for almost all choices of 
rate functions and their parameters.
Our approach is reliably automated, and still is able to assist in a fast and meaningful  
first conceptual analysis of metabolic networks -- even in the hands of biological 
non-experts like ourselves.

We have presented five types of results. 
\begin{itemize}
 \item Theorems \ref{thm:2.1} -- \ref{thm:2.3} are based on the notion of \emph{child selections}, i.e.~injective maps $J: \mathcal{M} \rightarrow \mathcal{E}$ such that each ``mother'' metabolite $m$ is an input $m \vdash J(m)$ of the ``child'' reaction $J(m)$. 
 The existence of certain child selections implies linear nondegeneracy of the network, flux influences $j^* \leadsto j'$, and metabolite influences $j^* \leadsto m'$, respectively, under perturbations of the rate function $r_{j^*}(x)$ of reaction $j^*$. 
 Each theorem in fact asserts certain quantities to be nonzero, algebraically. 
 In other words, the relevant quantities turn out to be nonzero, when viewed as polynomial or rational expressions in the partial derivatives $r_{jm} = \partial_{x_m}  r_j(x)$ of the rate functions $r_j(x)$ with respect to the metabolite concentrations $x_m$ of the reaction inputs $m \vdash j$. 
 
 \item Theorems \ref{thm:2.4} -- \ref{thm:2.7} establish a \emph{hierarchy} of flux and metabolite responses to rate perturbations of any reaction $j^*$. 
 Only the crucial property of \emph{transitivity} of flux influence allows us to comprehensively summarize all flux and metabolite responses in a single \emph{flux influence graph} $\mathcal{F}$.
 
 \item Theorem \ref{thm:B4.1} investigates \emph{augmenticity}. 
 This allows us to systematically predict the lumping effects of network extensions $(\mathcal{E}_1, \mathcal{M}_1) \supseteq  (\mathcal{E}_0, \mathcal{M}_0)$ on the associated flux influence graphs.
 
 \item Section \ref{sect:algo} provides an \emph{efficient algorithm} to implement our results for the moderately large networks in data bases like \cite{KEGG, BernhardD}. 
 Our algorithm is based on the Schwartz-Zippel lemma \ref{lem:schwartz-zippel}. 
 For each network it involves a single randomized computation for integer arithmetic modulo moderately large primes. We have tested our approach for networks which involve up to a few hundred metabolites and reactions. 
 
 \item Specific \emph{examples} have been presented. In section \ref{sec3:theoex} we have addressed two theoretical examples to illustrate our results. Section \ref{sec7:extca} has explored the TCA cycles proposed by \cite{ishii, ishii2}, together with several variants which only differed in monomolecular exit reactions of certain metabolites. It was evident how the inclusion, or omission, of monomolecular exit reactions may strongly affect the resulting hierarchy of flux influences in the network.
 \end{itemize}

We did not aim for quantitative results. Given a rate perturbation of any reaction $j^*$, our results only distinguish zero responses of steady states from nonzero responses. 
Unlike \cite{giordano2015computing}, we did not even keep track of the (positive or negative) sign of any nonzero response. 
While a zero response is exact, and independent of the particular reaction rates involved, we repeat that nonzero responses only hold in an algebraic sense.

Our \emph{assumptions} are few. Surjectivity, i.e. full rank $M=|\mathcal{M}|$, of the stoichiometric matrix $S: \mathcal{E} \rightarrow \mathcal{M}$, is our main requirement. Structural assumptions on the network are specified in terms of child selections, as repeated above. For the rate functions we only assume algebraic independence of the partial derivatives $r_{jm} = \partial_{x_m}  r_j(x)$. We do not require any further assumptions. In particular, no numerical information is required.

Our results can be read as pure linear algebra with an abstract underlying ``network structure''. From an abstract point of view, no reference to any steady state $x$ is required. To interpret our results in the context of a steady state response to external rate perturbations in concrete reaction networks, of course, at least the existence of a steady state $x$ as in \eqref{eq:1.10} is required. Since actual reaction rates may become linearly degenerate in absence of reactants, it will also be safer to consider strictly positive steady state components, $x_m >0$, in such cases, to actually provide algebraically independent partial derivatives $r_{jm}$.

We have already pointed out how \emph{mass action kinetics} \eqref{eq:1.20} are not sufficiently rich, in general, to allow for algebraically independent rate coefficients $r_{jm}$.
A large complementary body of work on multiplicity of steady states and on network sensitivity, specifically in the context of mass action kinetics, has been initiated by Feinberg and coworkers; see \cite{feinberg95, sf09, sf10, sf11, shietal11}.
Their emphasis is on ``quantitative robustness'' of the steady state metabolite response to perturbations of fluxes and some ``elemental'' metabolite concentrations. Robustness is understood as bounded, usually small, but nonzero linear response to the external perturbation.

In the present paper we did not treat stoichiometric subspaces. 
Suppose our surjectivity assumption for the stoichiometric matrix $S: \mathcal{E} \rightarrow \mathcal{M}$ is violated. Let $Q: \mathcal{M} \rightarrow \mathcal{M}$ project onto the nontrivial co-kernel of $S$, i.e.~$QS=0$. 
Then \eqref{eq:1.8} implies $Q \dot{x}(t) = 0$, and hence $Qx(t) = \eta = \mathrm{const}$. 
The invariant, affine linear subspaces defined by the constant stoichiometric integrals $\eta$ are called \emph{stoichiometric subspaces}. It is not difficult to adapt our approach to the presence of nontrivial stoichiometric subspaces. 
Indeed, we may easily split our matrix $B$, in section \ref{sec3:theoex}, to account for the range $(\mathrm{id}_{\mathcal{M}}-Q)S$ and the stoichiometric integrals $\eta$, separately. 
Moreover, a thorough analysis should include the influence of the stoichiometric integrals $\eta$, as parameters, together with any resulting transitivity properties. 
To avoid the foreseeable inflation of notation, in the present paper, and for a certain lack of concrete biological motivation, we chose not to include that case here. 
Omnipresent monomolecular exit reactions $X_m \rightarrow \cdot\ $ are able to destroy any stoichiometric subspaces, anyway.

\emph{Monomolecular exit reactions}, however, raise a cautioning menetekel. 
We illustrate this serious caveat by the following construction. 
Consider any network $(\mathcal{E}_0, \mathcal{M})$. 
Extend to a network $(\mathcal{E}^\times, \mathcal{M}) \supseteq  (\mathcal{E}_0, \mathcal{M})$ such that $\mathcal{E}^\times$ provides a monomolecular exit reaction $j_m \in \mathcal{E}^\times$, for each metabolite $m \in \mathcal{M}$, in addition to the reactions and (non-identical) exit reactions already present in $\mathcal{E}_0$. 
In the original network we may assume, of course, that each metabolite $m$ participates in at least one none-exit reaction. Else we may omit that metabolite $m$ from consideration. 
In particular, then, the extended network $(\mathcal{E}^\times, \mathcal{M})$ does not contain any single children. 
Also note how an appropriate choice $r_{j_m}(x) := 1- k_{j_m}x_m$ of exit reaction rates will not even affect a given steady state of $(\mathcal{E}_0, \mathcal{M})$. 
Our final assumption on the original network is that each reaction $j' \in \mathcal{E}_0$ possesses a mother metabolite $m'$ which does not just participate catalytically in reaction $j'$. 
Else $\bar{y}^{j'} = y^{j'}$, and we may omit that reaction $j'$ from consideration, as well.

The exit choice $J^\times(m) := j_m$ of a child selection shows regularity of the extended network; see theorem \ref{thm:2.1}. 
The same choice establishes self-influence $j^* \leadsto j^*$, in  the extended network, for any non-exit reaction $j^*$ of the original network $\mathcal{E}_0$; see theorem \ref{thm:2.2}(i).

More seriously, consider any two distinct non-exit reactions $j^* \neq j'$ in the original network $\mathcal{E}_0$. 
We claim \emph{universal mutual flux influence} $j^* \leadsto j'$, in the extended network $(\mathcal{E}^\times, \mathcal{M})$. 
To show our claim, we first pick a non-catalytic mother metabolite $m'$ of reaction $j'$. Thus $\bar{y}^{j'}_{m'} - y^{j'}_{m'} \neq 0$. 
Define  a child selection $J$ in the extended network, as follows. 
Let $J(m') := j'$, and pick monomolecular exit reactions $J(m) := J^\times (m) = j_m$ for all other metabolites $m \neq m'$. 
Then theorem \ref{thm:2.2}(ii) proves our claim. 
Indeed, $j' =J(m') \in J(\mathcal{M})$, by construction. 
Further, $j^* \not\in J(\mathcal{M})$ because $j^* \neq j'$ and $j^*$ is not an exit reaction. 
Finally, consider the columns of the stoichiometric matrix $S$ restricted to the swapped reaction set $\{ j^*\} \cup J(\mathcal{M}) \smallsetminus \{j'\}$. 
By construction, and because the only non-exit column $S^{j'}$ of the restriction satisfies $S^{j'}_{m'} = \bar{y}^{j'}_{m'} - y^{j'}_{m'} \neq 0$, that restriction possesses nonzero determinant. 
This establishes assumption \eqref{eq:2.6} of theorem \ref{thm:2.2}(ii), and hence establishes our claim $j^* \leadsto j'$ of universal mutual flux influence, in the extended network $(\mathcal{E}^\times, \mathcal{M})$.

Similarly, and in the same extended network $(\mathcal{E}^\times, \mathcal{M})$, we can show \emph{universal metabolite influence} $\mathcal{E}_0 \ni j^* \leadsto m'$, for any metabolite $m' \in \mathcal{M}$ which is not purely catalytic in the original network $(\mathcal{E}_0, \mathcal{M})$.

The above elementary, but far-reaching, menetekel shows how the presence of mono\-molecular exit reactions, for all metabolites, catastrophically lumps all non-exit reactions into a single flux equivalence class. 
In particular, any interesting hierarchy is obliterated and all sparsity is effaced. 
The present qualitative theory therefore requires meticulous attention as to the actual scales -- but not the actual numerical values -- of decay rates of metabolites, relative to the reaction rates within the metabolic network.

Such simple examples underline why the purely \emph{monomolecular case}, where each stoichiometric vector $y^j, \bar{y}^j$ has singleton or empty support, rightly commands intense interest.  
In the purely monomolecular case, the reaction network itself defines a directed \emph{reaction graph} with single metabolites, as vertex set $\mathcal{M} \cup \{0\}$, and reaction arrows, as edges $\mathcal{E}$. 
The reaction graph essentially coincides with Feinberg's graph of reaction complexes, in that case, and exhibits Feinberg deficiency zero; see \cite{feinberg95} for terminology, and \cite{fiedler15mono} for a proof of this claim. 
In fact our previous monomolecular results in \cite{fiedler15mono} were expressed, and proved, via cycles and spanning trees in the reaction graph. 
For much more elegant formulations of these results, and significant further developments, see \cite{Vassena15, Vassena16, MatanoVassena16}.
 
In the monomolecular reaction graph setting, for example, the kernel condition \eqref{eq:2.4b} of theorem~\ref{thm:2.1} guarantees that the child selection $J$ does not run into oriented or nonoriented cycles. 
 Equivalently, $J$ defines a directed exit path $\gamma^0$ from the unique mother vertex $m^*$ of any reaction $j^*$ to the exit vertex $0$. 
 Theorem~\ref{thm:2.2} then characterizes $j^* \leadsto j'$ by the existence of a second directed path $\gamma'$ from the same mother $m^*$ of $j^*$ to $j'$. 
 The paths $\gamma^*$ and $\gamma'$ are disjoint, except for $m^*$. 
 One of the two paths contains $j^*$.
The results of \cite{fiedler15mono} can therefore be derived from the multimolecular versions in theorems~\ref{thm:2.1} and~\ref{thm:2.2}, directly.
More importantly, regions of influence can be described in purely graph theoretic language. 
For deeper considerations in this direction, see \cite{Vassena15, Vassena16, MatanoVassena16} again.
This eliminates the necessity of any symbolic algebra at all.

We therefore find the interpretation of monomolecular metabolic networks as reaction graphs extremely appealing and intuitive. 
We have already mentioned several multimolecular graph paradigms due to Feinberg and his coworkers. 
Another option, in the present multimolecular case, are bipartite digraphs with respective vertices $\mathcal{M} \cup \{ 0\}$ and $\mathcal{E}$. 
Directed edges $\mathcal{M} \cup \{ 0\} \ni m \rightarrow j \in \mathcal{E}$ then indicate inputs $m \vdash j$, or feed reactions $j$. 
The opposite edges $j \rightarrow m$ refer to outputs, or exit reactions.
So far, however, we neither succeeded via bipartite reaction graphs, nor via extensive child swapping, to provide alternative proofs of transitivity of flux or metabolite influence in the multimolecular case.

Our results above have been of a local perturbation character. 
Indeed we have invoked the standard implicit function theorem, in section \ref{sec1:intro}, to justify our linear algebra setting. 
See \eqref{eq:1.13}. 
Knockout experiments, on the other hand, typically obstruct some reaction $j^*$ in the network entirely, and study the consequences for steady states. 
Results for \emph{large perturbations} are therefore required.

Fortunately, our results on zero flux or metabolite responses are not limited to small perturbations, because they only rely on structural information concerning the network stoichiometry. 
Therefore they apply to large perturbations and to knockout experiments, as well. 
To be specific, consider any two steady states
	\begin{equation}
	0=S r(\varepsilon^\iota, x^\iota), \qquad r_{j^*}(\varepsilon^0,x^0) \neq r_{j^*}(\varepsilon^1,x^1)\,,
	\label{eq:disc.22}
	\end{equation}
 with $\iota =0,1$, which are associated to a large parameter perturbation $\varepsilon^1 - \varepsilon^0 = e_{j^*}\,$. 
% To avoid trivial cases, we assume the rates $r_{j^*}$ to actually differ at the steady states.
Let $\delta x^{j^*}$:= $x^1-x^0\neq 0$ denote a resulting large steady state perturbation.
We interpolate linearly between parameters $\varepsilon^0$ and $\varepsilon^1$, and between steady states $x^1$ and $x^0$, by
	\begin{equation}
	\varepsilon^\vartheta := \varepsilon^0 + \vartheta \cdot e_{j^*} \quad \mathrm{and} \quad
	x^\vartheta := x^0 + \vartheta \cdot \delta x^{j^*}\,,
	\label{eq:disc.23}
	\end{equation}
for $0 \leq \vartheta \leq 1$. 
% Note $\varepsilon^\vartheta_j = \varepsilon^1_j = \varepsilon^0_j\,$, for $j\neq j^*$. 
The intermediate evaluation points $(\varepsilon^\vartheta, x^\vartheta)$ are not required to be steady states.
We obtain
	\begin{equation}
	0= S(\eta e_{j^*} + \tilde{R}\delta x^{j^*})\,.
	\label{eq:disc.24}
	\end{equation}
Here the scalar $\eta$ and the matrix $\tilde{R}= (\tilde{r}_{jm})$ abbreviate integrals
	\begin{equation}
	\eta = \int_0^1 \partial_{\varepsilon_{j^*}}r_{j^*}
	(\varepsilon_{j^*}^\vartheta, x^\vartheta) d \vartheta\,,\qquad
	\tilde{r}_{jm} = \int_0^1 r_{jm}
	(\varepsilon_j^\vartheta, x^\vartheta) d \vartheta\,.
	\label{eq:disc.25}
	\end{equation}
Comparing \eqref{eq:disc.24}, for $\eta=1$, with our original setting \eqref{eq:1.13}, there is no difference. 
Therefore we obtain identical results on the influence of large perturbations, for generic rate functions $r_j(x)$, as for the small local perturbations considered so far.

More precisely, suppose first that $j^* \in \mathcal{E}$ does not influence $\alpha \in \mathcal{E} \cup \mathcal{M}$, algebraically. Then this zero influence holds, in \eqref{eq:disc.24} as in \eqref{eq:1.13}, independently of the values of $\tilde{r}_{jm}\,$. Hence zero influence persists under large perturbations.

Next, assume local metabolite influence $j^* \leadsto m'$. 
We then claim that the large perturbation \eqref{eq:disc.22} satisfies $\delta x^{j^*}_{m'} \neq 0\,$, as well, for generic reaction rate functions. Suppose therefore $\delta x^{j^*}_{m'} = 0$.
Let us slightly perturb the derivatives $\partial_m r_j(\varepsilon^0,x^0)$ of the rate functions at $x^0$, but not any of the rates $r_j(\varepsilon^0,x^0),\,r_j(\varepsilon^1,x^1)$ themselves. 
This keeps $\delta x^{j^*}_{m'} = 0$ unchanged, but avoids any algebraic degeneracies of the derivatives $r_{jm} = \partial_m r_j(\varepsilon^0,x^0),$ at the steady state $x^0$. 
Since we have assumed $r_{j^*}(\varepsilon^0,x^0) \neq r_{j^*}(\varepsilon^1,x^1)$ in \eqref{eq:disc.22}, we may now perturb the reaction rate $r_{j^*}(\varepsilon^0,x^0)$ at $x^0$, very slightly, but keep the same rate fixed at $x^1$. 
Our very slight perturbation of the reaction rate $r_{j^*}(\varepsilon^0,x^0)$ will therefore nudge $x^0_{m'}$ and $\delta x^{j^*}_{m'}\,$, but not $x^1_{m'}$.
By definition, local influence $j^* \leadsto m'$ implies $z^{j^*}_{m'} \neq 0\,$, algebraically, for the local response vector $Bz^{j^*} = -e_{j^*}$ at $x^0$. Therefore our very slight second perturbation will nudge $\delta x^{j^*}_{m'}$ away from zero, by a very slight multiple of $-z^{j^*}_{m'} \neq 0$.
This establishes persistence of metabolite influence under larger perturbations, for generic rate functions. 
We omit analogous arguments, which establish generic persistence of flux influence under large perturbations.

% Implicitly, we have excluded the case $\varepsilon^0=\varepsilon^1$, alias $\eta=0$, of multiple steady states $x^0 \neq x^1$, which might coexist at the same parameter vector $\varepsilon$. 
% That case requires a detailed study of the support of kernel vectors of the square matrix $S\tilde{R}$, which we may pursue elsewhere. 
% Suffice it to note here that such nontrivial kernels require, and may arise for, algebraically degenerate integral reaction coefficients $\tilde{r}_{jm}$ in \eqref{eq:disc.25}. 
% Locally, the same degeneracy of $r_{jm}$ will indicate steady state bifurcations.

Large perturbations, in the form of knockout experiments, are currently used to reveal and test the structure of metabolic networks. 
More ambitiously, however, large perturbations also offer an option for the active \emph{control of metabolic networks}. See the monograph \cite{HeinrichSchuster} for an early background.
The full flux influence graph $\mathcal{F}$ can serve as a guiding principle. 
First of all, the influence graph reliably identifies some modularity, alias regions of influence, in terms of biological function. 
Our discussion of the TCA cycle and its variants, in section \ref{sec7:extca}, illustrates and emphasizes that aspect. 
It is therefore conceivable to couple such functional units, from different networks, and study their hierarchic or mutual interaction. 
Augmenticity, as in section \ref{secB4:aug}, provides a key aspect of this second step.

More modestly, and more specifically, we may consider a second network which just provides a key catalytic enzyme, as exiting output, to significantly perturb the rate of some reaction $j^*$ in a primary network. 
Already the full flux influence graph $\mathcal{F}$ of the primary network will point at desirable target metabolites $m'$ in the primary network which may thus be stimulated, or inhibited. 
At the same time, $\mathcal{F}$ will reliably delimit the collateral influence of such a unidirectional coupling. 
As a drawback of our current results, however, we have not determined the precise signs of the resulting influences. 
We have only mentioned the determination of response signs by \cite{giordano2015computing}, along with the computational cost involved, in section \ref{sect:algo}.

An elegant \emph{upper estimate} of influence regions in multimolecular networks $(\mathcal{E}, \mathcal{M})$ has been obtained earlier by Okada; see \cite{OkadaFiedlerMochizuki15} and \cite{OkadaMochizuki16}. 
Although the Okada result includes the treatment of stoichiometric subspaces, let us explain its relation to our present paper in the case of surjective stoichiometric matrices $S$. 
Okada considers subnetworks $(\mathcal{E}_0, \mathcal{M}_0) \subseteq (\mathcal{E}, \mathcal{M})$ which are ``output complete'', i.e.
	\begin{equation}
	\mathcal{M}_0 \ni m_0 \vdash j_0 \in \mathcal{E}\quad \mathrm{implies} \quad j_0 \in \mathcal{E}_0\,. 
	\label{eq:ok1}
	\end{equation}
In slightly cryptic form, Okada also requires
	\begin{equation}
	\mathrm{dim} \,(\mathrm{ker}\, S \cap \mathcal{E}_0) + \mathrm{dim}\, \mathcal{M}_0 = \mathrm{dim}\, \mathcal{E}_0 \, . 
	\label{eq:ok2}
	\end{equation}
As a consequence, Okada obtains the following upper estimates on the flux influence sets $I_\mathcal{E}(j^*)$ and the metabolite influence sets $I_\mathcal{M}(j^*)$ introduced in \eqref{eq:2.10} and \eqref{eq:2.11}:
	\begin{equation}
	\underset{j^* \in \mathcal{E}_0}{\bigcup} I_\mathcal{E}(j^*) \subseteq \mathcal{E}_0 \quad \mathrm{and} \quad 
	\underset{j^* \in \mathcal{E}_0}{\bigcup} I_\mathcal{M}(j^*) \subseteq \mathcal{M}_0 \,.
	\label{eq:ok3}
	\end{equation}

Note how Okada is careful not to claim any actual influence of any reaction $j^*$ in particular.
For example, $\mathcal{E}_0$ may contain single children $j^*$. 
We already saw in sections \ref{sec2:mainres} how single children $j^*$ exert no influence, at all, except on their own single-child mother $m^* \vdash j^*$. 
But empty influence sets are perfectly compatible with Okada's upper estimate \eqref{eq:ok3}, indeed.

The beauty of Okada's upper estimate \eqref{eq:ok3} on regions of influence is not diminished by the simplicity of its proof. 
In terms of the $M \times E$ block matrix $B$ which we have introduced in section \eqref{eq:3.2}, Okada's output completeness condition \eqref{eq:ok1} implies that $B$ restricts to a linear map
	\begin{equation}
	B: \quad  (\mathrm{ker}\,S \cap \mathcal{E}_0) \cup \mathcal{M}_0 \ \longrightarrow \ \mathcal{E}_0 \cup \{0\}\,.
	\label{eq:ok4}
	\end{equation}
By Okada's second condition \eqref{eq:ok2}, the restriction \eqref{eq:ok4} is a square matrix. 
By the nondegeneracy condition $\det SR \neq 0$, the matrix $B$ possesses trivial kernel, and so does its restriction \eqref{eq:ok4}. Therefore the square restriction of $B$ is invertible, like the original matrix $B$ itself. 
In particular \eqref{eq:3.1} and \eqref{eq:ok4} imply
	\begin{equation}
	z^{j^*} := -B^{-1} e_{j^*} \in (\mathcal{E}_0 \cup \mathcal{M}_0),
	\label{eq:ok5}
	\end{equation}
for any $j^* \in \mathcal{E}_0$. By definition \eqref{eq:3.3} of influence, this proves Okada's upper estimate \eqref{eq:ok3}.

In spite of ambitious claims added by the second author in \cite{OkadaMochizuki16}, prematurely, Okada's elegant upper estimate does not establish any hierarchy of influence. 
The Okada estimate has only been shown to be compatible with the hierarchical structure already observed, by anecdotal evidence, in the examples of \cite{fiedler15example}. 
Our present paper provides a universal mathematical foundation for such hierarchy phenomena, for the first time, thanks to the multimolecular transitivity theorem \ref{thm:2.4}. 

The hierarchical structure which we obtain is necessarily more detailed than the upper estimates by Okada. 
We have seen how the Okada upper estimate may in fact strictly overestimate influence regions. 
%Simple examples are provided by top level $j^* \in \mathcal{E}_0$ without self-influence, i.e. by $j^* \not\in \mathcal{E}(j^*)\,$. 
But only when oversold as an influence hierarchy, the upper estimate \eqref{eq:ok3} oversimplifies the beautifully complex network response to single reaction perturbations and knockout experiments.

While working on the present paper we also came across the groundbreaking and monumental results by Murota on \emph{layered matrices}. 
Some of his results precede our present work by more than three decades.
See for example his monograph \cite{murota2009matrices} and the many references there.
Main applications, so far, have been concerned with electrical networks of Kirchhoff type.
Very abstractly, Murota's layered matrices contain entries from two different fields $\mathbb{K} \subset \mathbb{F}$. 
Our network analysis amounts to the particular case $\mathbb{K} = \mathbb{R}$ or $\mathbb{Q}$, and rational functions $\mathbb{F} = \mathbb{R}(x)$.
In his much more general setting, Murota derives normal forms, block triangularizations, and associated formal partial orders.
Block triangularization of our mixed matrix $B$, of course, is inherited by $B^{-1}$. 
``Hierarchic'' subspaces of influence can then be interpreted as a consequence of block triangularization of $B^{-1}$. 
Therefore Murota's formal partial orders are closely related to upper estimates of influence regions in the Okada style. 
In that sense, even our anecdotal observation of specific influence patterns in the examples of \cite{fiedler15example} cannot claim much mathematical novelty. 

Our notion of transitivity of influence by rate perturbations of single reactions $j^*$, on the other hand, is too closely related to the specific structure of metabolic networks to have been considered by Murota. Indeed, we did make use of the specific metabolite-reaction structure of chemical reaction systems, as expressed in the specific layered structure of the matrix $B$ and the subtle relation between the reaction matrix $R$ and the stoichiometric matrix $S$. Therefore it was easier for us, and probably for most of our readers, to develop the relevant theory from scratch and make our paper reasonably elementary and self-contained. 

In conclusion we hope that our theoretical attempts may serve the scientific community well enough, in face of the formidable challenges -- both, experimental and theoretical -- posed by the large and improving data bases of metabolic networks.

%%%%%%%%%%%%%%%%%%%%%%%%%%%%%%%%%%%%%%%%%%%%%%%%%%%%%%%%%%%
%\newpage

\end{document}